\newcommand{\dyw}{\mbox{\rm div}}
\newtheorem{tw}{Theorem}[subsection]
\newtheorem{lm}[tw]{Lemma}
\newtheorem{wn}[tw]{Corollary}
\newtheorem{stw}[tw]{Proposition}
\newenvironment{dow}{\it Proof.\rm}{\hfill $\Box$}
\theoremstyle{definition}
\newtheorem*{df}{Definition}
\newtheorem{uw}[tw]{Remark}
\newcommand{\BN}{{\mathbb N}}
\newcommand{\BR}{{\mathbb R}}
\newcommand{\BX}{{\mathbb X}}
\newcommand{\WW}{{\mathcal W}}
\newcommand{\FF}{{\mathcal{F}}}
\newcommand{\GG}{{\mathcal{G}}}
\newcommand{\BB}{{\mathcal{B}}}
\newcommand{\DM}{{\mathcal{D}}}
\newcommand{\MM}{{\mathcal{M}}}
\newcommand{\PP}{{\mathcal{P}}}
\newcommand{\VV}{{\mathcal{V}}}
\newcommand{\bX}{{\mathbf{X}}}
\newcommand{\tdw}{{\tilde{w}}}
\newcommand{\bfX}{{\mathbf{X}}}
\newcommand{\ovV}{{\overline{v}}}
\newcommand{\BRD}{{\mathbb{R}^{d}}}
\newcommand{\essinf}{\mathop{\mathrm{ess\,inf}}}
\newcommand{\esssup}{\mathop{\mathrm{ess\,sup}}}
\newcommand{\nsubsection}{\setcounter{equation}{0}\subsection}
\begin{document}

\title {Obstacle problem for semilinear parabolic equations
with measure data}
\author {Tomasz Klimsiak and Andrzej
Rozkosz%\footnote{%Corresponding author.
%E-mail: rozkosz@mat.umk.pl; tel.: +48-56 6112953;
%fax: +48-56 6112987.}
\smallskip\\
{\small Faculty of Mathematics and Computer Science,
Nicolaus Copernicus University} \\
{\small  Chopina 12/18, 87--100 Toru\'n, Poland}\\
%{\small e-mail: tomas@mat.uni.torun.pl}
}
\date{}
\maketitle
\begin{abstract}
We consider the obstacle problem with two irregular  barriers for
the Cauchy-Dirichlet problem for semilinear parabolic equations
with measure data. We prove the existence and uniqueness of
renormalized solutions of the problem and well as results on
approximation of the solutions by the penaliztion method. In the
proofs we use probabilistic methods of the theory of Markov
processes and the theory of backward stochastic differential
equations.
\end{abstract}

\footnotetext{{\em Mathematics Subject Classification (2010):}
Primary 35K58; Secondary 60H30.}

\footnotetext{{\em Keywords:} Divergence form operator, semilinear
parabolic equation, obstacle problem, measure data.}

\nsubsection{Introduction}

Let $D\subset\mathbb {R}^{d}$, $d\ge2$, be an open bounded set and
let $\mu$ be a bounded soft measure on $D_{T}:=(0,T]\times D$ (we
call a Radon measure $\mu$ soft if it does not charge sets of zero
parabolic capacity). Suppose we are also given
$f:D_T\times\BR\rightarrow\BR$, $\varphi:D\rightarrow\BR$ and two
functions $h_1,h_2:D_T\rightarrow\bar{\mathbb{R}}$ such that
$h_1\le h_2$. In the present paper we investigate the obstacle
problem
\begin{equation}
\label{eq1.1}
\left\{
\begin{array}{l}\frac{\partial u}{\partial t}+A_{t}u\le-f_u-\mu
\mbox{ on } \{u\ge h_{1}\},\smallskip \\
\frac{\partial u}{\partial t}+A_{t}u\ge-f_u-\mu
\mbox{ on } \{u\le h_{2}\},\smallskip\\
h_{1}\le u\le h_{2} \mbox{ on }D_T\,, \smallskip\\
u(T, \cdot)=\varphi, \quad u(t, \cdot)_{|\partial D}=0, \quad t
\in(0,T).
\end{array}
\right.
\end{equation}
Here $f_u(t,x)=f(t,x,u(t,x))$, $(t,x)\in D_T$, and
\begin{equation}
\label{eq1.07} A_{t}=\frac 12 \sum_{i,j=1}^{d}
\frac{\partial}{\partial
x_{j}}(a_{ij}(t,x)\frac{\partial}{\partial x_{i}}),
\end{equation}
where $a: D_{T}\rightarrow\BR^d\otimes\BR^d$ is a measurable
symmetric matrix-valued function  such that for some
$\Lambda\ge1$,
\begin{equation}
\label{eq1.002} \Lambda^{-1}|y|^2\le\sum_{i,j=1}^{d} a_{ij}(t,x)
y_{i}y_{j}\le\Lambda |y|^{2},\quad y\in \mathbb{R}^{d}
\end{equation}
for a.e. $(t,x)\in D_{T}$.

Problem (\ref{eq1.1}) with regular barriers  and $L^2$ data is
quite well investigated (see, e.g., the classical monograph
\cite[Section 3.2]{BL}). There are only few papers devoted to problem
(\ref{eq1.1}) with one irregular time-dependent  barrier and to
our knowledge there is no paper on problem (\ref{eq1.1}) with two
irregular barriers and general soft measure on the right-hand
side. In \cite{MP,Pierre1} the linear problem with one barrier and
$L^2$ data is considered. A semilinear problem (\ref{eq1.1}) with
$L^2$ data and $f$ satisfying a Lipschitz and a linear growth
condition is considered in \cite{Kl:SPA} in the case of one
barrier and in \cite{Kl:PA2} in the case of two barriers. Note,
however, that unlike the present paper, in \cite{Kl:SPA,Kl:PA2}
the function $f$ may depend on the solution of (\ref{eq1.1}) as
well as its gradient.

The main goal of the paper is to prove the existence and
uniqueness of solutions of (\ref{eq1.1}) in the case where the
barriers $h_1,h_2$ are merely measurable and satisfy some kind of
separation condition, $\varphi\in L^1(D)$, $f$ satisfies a
monotonicity condition in $u$ and some mild growth condition
considered earlier in the theory of PDEs with measure data (see,
e.g., \cite{BBGGPV}). We are also interested in the approximation
of solutions of (\ref{eq1.1}) by the penalization method. As in
\cite{Kl:SPA,Kl:PA2}, to study these problems we adopt a
stochastic approach based on the theory of backward stochastic
differential equations.

The problem of existence and uniqueness of solutions of
(\ref{eq1.1}) with measure data is delicate even for regular
barriers. If the barriers are irregular then the additional
serious problem is to define solutions of (\ref{eq1.1})  in  a way
that ensures their uniqueness. The classical approach via
variational inequalities is not suitable even in the case of one
barrier and $L^2$ data, because even in that case the solution of
the variational inequality with irregular barrier is in general
not unique.

To overcome the difficulty with uniqueness of solutions one could
try to adapt to the parabolic case the method of minimal
solutions, which was applied successfully in \cite{Leone} to
one-sided elliptic obstacle problems with general Radon measure on
the right-hand side. Unfortunately, the concept of minimal
solutions is not directly applicable to the two-sided obstacle
problem.

To address the nonuniqueness problem one can also try to adopt the
approach from the paper \cite{Pierre1} devoted to linear parabolic
one-sided obstacle problems with $L^2$ data and define a solution
of (\ref{eq1.1}) as a pair $(u,\nu)$ consisting of a measurable
function $u:D_T\rightarrow\BR$ and a soft measure $\nu$ on $D_T$
such that
\begin{equation}
\label{eq1.4} \left\{
\begin{array}{l}\frac{\partial u}{\partial t}+A_{t}u
=-f_{u}-\mu-\nu\mbox{ in }D, \smallskip \\
u(T,\cdot)=\varphi, \quad u(t,\cdot)_{|\partial D}=0, \quad
t\in(0,T)
\end{array}
\right.
\end{equation}
is satisfied in some weak sense, $h_1\le u\le h_2$ a.e.  and $\nu$
satisfies some minimality condition. In case $h_1,h_2$ and $u$ are
regular, the natural minimality condition says that
\begin{equation}
\label{eq1.05}
\int_{D_T}(u-h_1)\,d\nu^+=\int_{D_T}(h_2-u)\,d\nu^-=0,
\end{equation}
where $\nu^+,\nu^-$ denote the positive and the negative part of
the Jordan decomposition of $\nu$, i.e. the reaction measure acts
only when $u$ touches the barriers. If $h_1,h_2$ are merely
measurable, the problem is to make sense of (\ref{eq1.05}),
because in general $\nu$ is not absolutely continuous with respect
to the Lebesgue measure. A further complication arises from the
fact that in general $u$ is not quasi-continuous. It is, however,
quasi-l.s.c., so determined q.e. Therefore one can hope that when
the barriers are quasi-l.s.c. or quasi-u.s.c., and hence
determined q.e., then the minimality condition holds (the
integrals in (\ref{eq1.05}) are then well defined since $\nu$ is
soft). Unfortunately, even in that case the integrals in
(\ref{eq1.05}) may be strictly positive (a simple example is to be
found  in \cite{Kl:SPA}).

Our definition of a solution of (\ref{eq1.1}) is based on a
nonlinear Feynman-Kac formula.
It may be viewed as a probabilistic extension %analogue?
of the definition described above, because in the linear case with
$L^2$ data our probabilistic solution $(u,\nu)$ coincides with the
solution considered in \cite{Pierre1}. Let us also note that in
the case of one-sided problem the first component $u$ of the
probabilistic solution is a minimal solution in the sense that
\begin{equation}
\label{eq1.06} u=\mbox{\rm quasi-essinf}\{v\ge
h_{1},\,m_{1}\mbox{-a.e.}:\,\, v\mbox{ is a supersolution of
problem \,\rm{(\ref{eq1.04})}}\},
\end{equation}
where $m_1$ is the Lebesgue measure on $\BR_+\times\BR^d$ and
\begin{equation}
\label{eq1.04} \left\{
\begin{array}{l}\frac{\partial u}{\partial t}+A_{t}u
=-f_{u}-\mu\mbox{ in }D, \smallskip \\
u(T,\cdot)=\varphi, \quad u(t,\cdot)_{|\partial D}=0, \quad
t\in(0,T).
\end{array}
\right.
\end{equation}
Thus our probabilistic approach leads to a parabolic analogue of a
solution considered in \cite{Leone}. In the case of two-sided
problem it leads to some variant of (\ref{eq1.06}).

Let $\BX=(X,P_{s,x})$ be a Markov family associated with the
operator $A_t$ and for a soft measure $\gamma$ on $D_T$ let
$A^{\gamma}$ denote the additive functional of $\BX$ associated
with $\gamma$ in the Revuz sense (see Section \ref{sec2}). Set
\begin{equation}
\label{eq1.09}
\xi^s=\inf\{t\ge s:X_t\notin D\}.
\end{equation}
By a solution of (\ref{eq1.1}) we mean a pair $(u,\nu)$ consisting
of a function $u$ on $D_T$ and a bounded soft measure $\nu$ on
$D_T$ such that
\begin{equation}
\label{eq1.2} u(s,x)=
E_{s,x}\Big(\mathbf{1}_{\{\xi^s>T\}}\varphi(X_T)
+\int_{s}^{\xi^s\wedge T}f_{u}(t,X_t)\,dt +\int_{s}^{\xi^s\wedge
T} d(A_{s,t}^{\mu}+A_{s,t}^{\nu})\Big)
\end{equation}
for q.e. $(s,x)\in D_T$ and $\nu$  satisfies a minimality
condition introduced in \cite{Kl:SPA}. The minimality condition
says that for any measurable functions $h_{1}^{*}, h_{2}^{*}$ on
$D_T$ having the property that $h_{1}\le h_{1}^{*}\le u\le
h_{2}^{*}\le h_{2}$ a.e. and the processes $[s,T]\ni t\mapsto
h_i^{*}(t,X_t)$, $i=1,2$, are c\`adl\`ag under $P_{s,x}$ for q.e.
$(s,x)\in D_T$ the following equalities
\begin{equation}
\label{eq1.3} \int_{s}^{\xi^s\wedge T}
(u_{-}(t,X_t)-h_{1-}^{*}(t,X_t))\,dA_{s,t}^{\nu^{+}}
=\int_{s}^{\xi^s\wedge T}
(h_{2-}^{*}(t,X_t)-u_{-}(t,X_t))\,dA_{s,t}^{\nu^{-}}=0
\end{equation}
hold $P_{s,x}$-a.s. for q.e. $(s,x)\in D_{T}$ (In (\ref{eq1.3}),
$g_{-}(t,X_t)=\lim_{s<t,s\rightarrow t}g(s,X_s)$ for $g:=
u,h^{*}_1,h^{*}_2$).

Our main result says that under natural mild  assumptions on the
data there exists a unique solution $(u,\nu)$ of (\ref{eq1.1}).
Moreover, from the nonlinear Feynman-Kac formula (\ref{eq1.2}) we
deduce that $u$ is a renormalized (and an entropy as well)
solution of problem (\ref{eq1.4}). If $A^{\mu}$ is continuous and
$h_1,h_2$ are quasi-continuous, condition (\ref{eq1.3}) has purely
analytic interpretation. We show that under this additional
assumption $u$ is quasi-continuous and (\ref{eq1.3}) reduces to
condition (\ref{eq1.05}). In the case of irregular barriers an
analytical formulation of the minimality condition is possible in
the linear case with $L^2$ data. In that case (\ref{eq1.3}) may be
expressed by using the notion of precise version of function
introduced in \cite{Pierre}.

The reason for adopting here a probabilistic definition of a
solution of (\ref{eq1.1}) not only pertains to  the difficulties
with the analytic formulation of the minimality condition for the
reaction measure $\nu$. One major  advantage of the probabilistic
definition is that it fits well to a general scheme of proving
existence of solutions of equations with measure data which was
successfully adopted in the paper \cite{KR} devoted to semilinear
elliptic equations with operators associated with general
symmetric regular Dirichlet forms. The scheme comprises two
essentially different parts. In the present context the first part
consists in using  stochastic methods to show the existence of a
pair $(u,\nu)$ satisfying (\ref{eq1.2}), (\ref{eq1.3}) and such
that the process $t\mapsto u(t,X_t)$ has some integrability
properties. As a matter of fact this part follows  rather easily
from results on doubly reflected BSDEs proved recently in
\cite{Kl:BSM}.  The second part consists in using the nonlinear
Feynman-Kac formula (\ref{eq1.2}) to prove additional regularity
properties of $u$ and to show that $u$ is a renormalized solution
of (\ref{eq1.4}).

Results from \cite{Kl:BSM} are also used to show that the solution
of (\ref{eq1.1}) can be approximated by the penalization method.
For instance, we show that under the same assumptions under which
there exists a unique solution $(u,\nu)$ of (\ref{eq1.1}), if
$u_{n}$ is a renormalized solution of the problem
\[
\left\{
\begin{array}{l}\frac{\partial u_{n}}{\partial t}+A_{t}u_{n}
=-f_{u_{n}}-\mu-n(u_{n}-h_{1})^{-}+n(u_{n}-h_{2})^{-}, \smallskip\\
u_{n}(T, \cdot)=\varphi, \quad u_{n}(t, \cdot)_{|\partial D}=0,
\quad t\in(0,T),
\end{array}
\right.
\]
then $u_{n}\rightarrow u$ q.e. on $D_{T}$ and  $\nabla
u_{n}\rightarrow\nabla u$ a.e. From \cite{Kl:BSM} and the main
result of the present paper it also follows that q.e. on $D_T$,
\begin{align}
\label{eq1.11} \nonumber u=\mbox{quasi-essinf}&\mbox{$\{v\ge h_1,
m_1$-a.e.: $v$ is a supersolution }\\
&\quad\mbox{of (\ref{eq1.04}) with $\mu$ replaced by
$\mu-\nu^{-}$}\}.
\end{align}
If $h_2\equiv+\infty$ then $\nu^-=0$, and so (\ref{eq1.11})
reduces to (\ref{eq1.06}). Finally, let us mention that from
\cite{Kl:BSM} and our main result it follows that $u$ can also be
characterized as a solution of the following stopping time problem
(sometimes called Dynkin game): for any $h_{1}^{*}, h_{2}^{*}$ as
in condition (\ref{eq1.3}),
\begin{align*}
u(s,x)&=\sup_{\sigma\in\mathcal{T}^s}\inf_{\delta\in\mathcal{T}^s}
E_{s,x} \Big(\int_s^{\sigma\wedge\delta\wedge T}
f_{u}(t,X_t)\,dt
+\int_{s}^{\sigma\wedge\delta\wedge T}dA^{\mu}_{s,t}\\
&\quad +h^{*}_{1}(\delta,X_{\delta})
\mathbf{1}_{\{\delta\le\sigma<T\}}\mathbf{1}_{\{\delta<\xi^s\wedge
T\}} +h^{*}_{2}(\sigma,X_{\sigma})\mathbf{1}_{\{\sigma<\delta\}}
\mathbf{1}_{\{\sigma<\xi^s\wedge T\}}\\
&\quad +\varphi(X_T)\mathbf{1}_{\{\sigma=\delta=T\}}\Big)
\end{align*}
for q.e. $(s,x)\in D_T$, where $\mathcal{T}^s$ denotes the set of
all stopping times with values in $[s,T]$ with respect to the
completion of the filtration generated by $X$.

In the present paper we are mostly interested in the investigation
of renormalized (or entropy) solutions of (\ref{eq1.1}). But it is
worth mentioning that as a byproduct of our proofs we obtain new
results on stochastic representation of solutions of (\ref{eq1.1})
and the Cauchy-Dirichlet problem (\ref{eq1.04}), which can be
regarded as problem (\ref{eq1.1}) with $h_1\equiv-\infty$,
$h_2\equiv+\infty$. Some of these results seem to be new even in
the case of problem (\ref{eq1.04}) with $L^2$ data. To our
knowledge in all the existing results on stochastic representation
of solutions to that problem some regularity of the boundary of
the domain is assumed. In the present paper we do not require any
regularity of $D$. Let us also note that in the recent paper
\cite{PPP} the existence and uniqueness of renormalized solutions
of the Cauchy-Dirichlet problem with more general than
(\ref{eq1.07}) divergence form operator (for instance $p$-Laplace
operator) and $f$ not depending on $x$ and satisfying the
so-called sign condition is proved. In our paper we consider
equations with $A$ given by (\ref{eq1.07}) but we  allow $f$ to
depend on $x$.

\nsubsection{Preliminaries}
\label{sec2}

In this section we have compiled some basic facts on diffusions
associated with the operator $A_t$ defined by (\ref{eq1.07}) and
their additive functionals associated with soft measures on
$\BR_+\times\BR^d$. Here and in the next sections we assume that
$a$ satisfies (\ref{eq1.002}). By putting $a_{ij}=\delta^i_j$
outside $D_T$ we can and will assume that $a$ is defined and
satisfy (\ref{eq1.002}) in all $\BR_+\times\BR^d$.

\subsubsection{Time-inhomogeneous diffusions and additive functionals}

Let $\Omega=C(\BR_+;\BR^d)$ be the space of  continuous
$\BR^d$-valued functions on $\BR_+=[0,+\infty)$, $X$ be the
canonical process on $\Omega$, $\FF^s_t=\sigma(X_u,u\in[s,t])$ and
for given $T>0$ let $\bar\FF^s_t=\sigma(X_u,u\in[T+s-t,T])$. We
define $\GG^s_T$ as the completion of $\FF^s_T$ with respect to
the family $\PP=\{P_{s,\mu}:\mu$ is a probability measure on
$\BB(\BR^d)$$\}$, where
$P_{s,\mu}(\cdot)=\int_{\BR^d}P_{s,x}(\cdot)\,\mu(dx)$, and then
we define $\GG^s_t$ ($\bar\GG^s_t$) as the completion of $\FF^s_t$
($\bar\FF^s_t$) in $\GG^s_T$ with respect to $\PP$.

Let $p$ denote the fundamental solution for the operator $A_t$ and
let $\BX=\{(X, P_{s,x}):(s,x)\in\BR_+\times\BR^d\}$ be a
time-inhomogeneous Markov process for which $p$ is the transition
density function, i.e.
\[
P_{s,x}(X_t=x;0\le t\le s)=1,\quad
P_{s,x}(X_t\in\Gamma)=\int_{\Gamma}p(s,x,t,y)\,dy,\quad t>s
\]
for any $\Gamma\in\BB(\BR^d)$. The process $\BX$ admits the
so-called strict Fukushima decomposition, i.e. for every $T>0$,
\begin{equation}
\label{eq1.03} X_{t}= X_{s}+A_{s,t}+M_{s,t},\quad t\in[s,T], \quad
P_{s,x}\mbox{-a.s.}
\end{equation}
for every $(s,x)\in[0,T]\times\BR^d$, where $\{A_{s,t},0\le s\le
t\le T\}$ is a two parameter continuous additive functional (CAF)
of $\BX$ (with respect to the filtration $\{\GG^s_t\}$) of zero
energy and $\{M_{s,t}, 0\le s\le t\le T\}$ is a two parameter
martingale additive functional (MAF) of $\BX$ (with respect to
$\{\GG^s_t\}$) of finite energy (see \cite{R:PTRF}). In
particular, $M_{s,\cdot}$ is a $(\{\GG^s_t\},P_{s,x})$-martingale.
Moreover, for every $(s,x)\in[0,T]\times\BR^d$,
\begin{equation}
\label{eq1.6} \langle M_{s,\cdot}^{i}, M_{s,\cdot}^{j}\rangle_{t}
=\int_{s}^{t}a_{ij}(\theta, X_{\theta})\,d\theta,\quad t\in[s,T],
\quad P_{s,x}\mbox{-a.s.}
\end{equation}
(see \cite{R:PTRF}). It follows that under $P_{s,x}$ the process
$B_{s,\cdot}$ defined as
\[
B_{s,t}=\int_{s}^{t} \sigma^{-1}(\theta, X_{\theta})\,dM_{s,
\theta},\quad t\in[s,T],
\]
where $\sigma\cdot\sigma^{*}=a$ and $\sigma^{-1}$ is the inverse
matrix of $\sigma$, is a Brownian motion on $[s,T]$. It is also
known (see \cite{K:JTP}) that $\BX$ admits the so-called
Lyons-Zheng decomposition, i.e. for any $T>0$,
\[
X_{t}-X_{u}= \frac 1 2 M_{u,t}+\frac12(N^{s,x}_{T+s-t}-N_{s,
T+s-u}^{s,x}) -\alpha^{s,x}_{u,t}, \quad s\le u \le t \le T
\]
for every $(s,x)\in[0,T]\times\mathbb{R}^{d}$, where
$N_{s,\cdot}^{s,x}$ is an $(\{\bar G^s_t\},P_{s,x})$-martingale
and
\[
\alpha^{s, x, i}_{u,t}= \sum_{j=1}^{d} \int_{u}^{t}\frac12a_{ij}
(\theta, X_{\theta}) p^{-1} \frac{\partial p}{\partial y_{j}}(s,x,
\theta, X_{\theta})\,d\theta.
\]

For a measurable $\mathbb{R}^{d}$-valued function $\mathbf{f}$ and
$s\le r\le t\le T$ we put
\begin{align*}
\int_{r}^{t} \mathbf{f}(\theta, X_{\theta})\,d^{*}X_{\theta} & :=
-\int_{r}^{t}\mathbf{f}(\theta,X_{\theta})\,(dM_{s,\theta}
+d\alpha_{s,\theta}^{s,x}) \\
&\quad -\int_{T+s-t}^{T+s-r}\mathbf{f}(T+s-\theta,
X_{T+s-\theta})\,dN^{s,x}_{s,\theta}.
\end{align*}
The usefulness of the backward integral defined above comes from
the fact that for regular $\mathbf{f}$,
\[
\int_{u}^{t}\mbox{div}\mathbf{f}(\theta, X_{\theta})\,d\theta
=\int_{u}^{t} a^{-1} \mathbf{f}(\theta,
X_{\theta})\,d^{*}X_{\theta}, \quad s \le u \le t \le T, \quad
P_{s,x} \mbox{-a.s.},
\]
where $a^{-1}$ is the inverse matrix of $a$ (see, e.g.,
\cite{R:Stochastics}).

Let us recall that for every $\Phi\in
L^{2}(0,T;H^{-1}(\mathbb{R}^{d}))$ there exist $f\in
L^{2}([0,T]\times\BR^{d})$ and $G=(G^1,\dots,G^d)$ with $G^i\in
L^{2}([0,T]\times\BR^{d})$, $i=1,\dots,d$, such that
\[
\Phi= f+\mbox{div}(G),
\]
i.e. for every $\eta\in L^{2}(0,T;H^{1}(\mathbb{R}^{d}))$,
\[
\Phi(\eta)=\int_{[0,T]\times\BR^d}\eta
f\,dm_{1}-\int_{[0,T]\times\BR^d}G\cdot \nabla \eta\,dm_{1},
\]
where $m_1$ denote the Lebesgue measure on $\BR_+\times\BR^d$.

The following lemma will be needed in the next section to prove
regularity of parabolic potentials. For the meaning of the notion
of ``quasi every" used below see Section \ref{sec2.3}.

\begin{lm}
\label{lm1.1} Assume that $\Phi_{1},\Phi_{2}\in L^{2}(0,T;
H^{-1}(\mathbb{R}^{d}))$, $D\subset \mathbb{R}^{d}$ is an open
bounded set and $\Phi_{1}=\Phi_{2}$ in $L^2(0,T; H^{-1}(D))$. If
$\Phi_{1}=f_{1}+\dyw(G_{1})$, $\Phi_{2}=f_{2}+\dyw(G_{2})$ then
for quasi every  $(s,x)\in D_{T}$,
\begin{align*}
&\int_{s}^{t\wedge\xi^{s}} f_{1}(\theta, X_{\theta})\,d\theta +
\int_{s}^{t\wedge\xi^{s}}a^{-1}G_{1}(\theta,
X_{\theta})\,d^{*}X_{\theta}\\
&\qquad=\int_{s}^{t \wedge\xi^{s}} f_{2}(\theta,
X_{\theta})\,d\theta+ \int_{s}^{t \wedge\xi^{s}}
a^{-1}G_{2}(\theta, X_{\theta})\,d^{*}X_{\theta},\quad t\in[s,T),
\quad P_{s,x}\mbox{\rm-a.s.}.
\end{align*}
\end{lm}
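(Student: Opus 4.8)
The plan is to reduce the statement to the vanishing of a single stochastic integral for the difference of the data, and then to prove this by mollifying inside $D$, where the backward integration formula applies pointwise. Set $f=f_{1}-f_{2}$ and $G=G_{1}-G_{2}$, so that $\Phi:=\Phi_{1}-\Phi_{2}=f+\dyw(G)$ vanishes in $L^{2}(0,T;H^{-1}(D))$. Since both sides of the asserted identity are linear in the integrand, it suffices to prove that for q.e. $(s,x)\in D_{T}$,
\[
\int_{s}^{t\wedge\xi^{s}} f(\theta, X_{\theta})\,d\theta
+\int_{s}^{t\wedge\xi^{s}} a^{-1}G(\theta, X_{\theta})\,d^{*}X_{\theta}=0,
\qquad t\in[s,T),\quad P_{s,x}\text{-a.s.}
\]
Testing the relation $\Phi=0$ against $\eta$ supported in $D$ and integrating by parts shows that $\dyw(G)=-f$ in the sense of distributions on $D_{T}$; in particular $G$ has divergence $-f\in L^{2}(D_{T})$ there.

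The main step is a localization that avoids any regularity of $\partial D$. I would fix an exhaustion of $D$ by open sets $D_{k}$ with $\overline{D_{k}}\subset D$ and $D_{k}\uparrow D$, and set $\xi^{s}_{k}=\inf\{t\ge s:X_{t}\notin D_{k}\}$, so that $\xi^{s}_{k}\uparrow\xi^{s}$. Mollifying $G$ in the space-time variables with a kernel $\rho_{n}$ of radius smaller than $\mathrm{dist}(D_{k},\partial D)$ yields smooth fields $G_{n}=G*\rho_{n}$ for which, on $D_{k}$, one has $\dyw(G_{n})=(\dyw G)*\rho_{n}=-f*\rho_{n}=:-f_{n}$, because the mollified divergence at a point of $D_{k}$ samples $G$ only inside $D$, where $\dyw(G)=-f$. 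Thus $(f_{n},G_{n})$ is a regular pair satisfying $f_{n}=-\dyw(G_{n})$ classically on $D_{k}$, with $f_{n}\to f$ and $G_{n}\to G$ in $L^{2}(D_{k})$. Applying the backward integration formula quoted in the preliminaries to the regular field $G_{n}$, stopped at $\xi^{s}_{k}$, gives
\[
\int_{s}^{t\wedge\xi^{s}_{k}} f_{n}(\theta, X_{\theta})\,d\theta
+\int_{s}^{t\wedge\xi^{s}_{k}} a^{-1}G_{n}(\theta, X_{\theta})\,d^{*}X_{\theta}
=-\int_{s}^{t\wedge\xi^{s}_{k}}\dyw(G_{n})(\theta, X_{\theta})\,d\theta
+\int_{s}^{t\wedge\xi^{s}_{k}} a^{-1}G_{n}(\theta, X_{\theta})\,d^{*}X_{\theta}=0.
\]

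The remaining, and hardest, part is to pass to the limit. For the Lebesgue integral I would bound $E_{s,x}\int_{s}^{\xi^{s}_{k}\wedge T}|f_{n}-f|(\theta,X_{\theta})\,d\theta$ by the $L^{2}(D_{k})$ norm of $f_{n}-f$ times a Green-function factor, using the Aronson estimates for the fundamental solution $p$. For the backward integral I would use the Lyons--Zheng representation together with the bracket relation (\ref{eq1.6}) and the bound on the drift term $\alpha^{s,x}$ coming from the gradient estimates on $p$: the forward martingale part $M$ and the time-reversed martingale part $N^{s,x}$ are controlled in $L^{2}$ by $E_{s,x}\int_{s}^{\xi^{s}_{k}\wedge T}\big(a^{-1}(G_{n}-G)\cdot(G_{n}-G)\big)(\theta,X_{\theta})\,d\theta\lesssim\|G_{n}-G\|_{L^{2}(D_{k})}^{2}$, again via a Green-function factor, so that a Doob--Burkholder--Davis--Gundy inequality yields convergence in $L^{2}(P_{s,x})$ uniformly in $t$. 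These estimates give convergence in $(s,x)$ with respect to $m_{1}$; extracting a subsequence and invoking a standard capacity argument upgrades this to $P_{s,x}$-a.s. convergence for q.e. $(s,x)$, so the displayed identity holds with $\xi^{s}_{k}$ in place of $\xi^{s}$. Finally, letting $k\to\infty$ with $\xi^{s}_{k}\uparrow\xi^{s}$ and using dominated convergence (the integrands being $P_{s,x}$-integrable on $[s,\xi^{s}\wedge T]$) removes the localization. I expect the control of the backward integral under mere $L^{2}$ convergence --- handling the time-reversed martingale $N^{s,x}$ and the drift correction $\alpha^{s,x}$ simultaneously with the stopping at $\xi^{s}_{k}$ --- to be the main technical obstacle.
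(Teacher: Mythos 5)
Your skeleton is the same as the paper's: mollify so that the relation $f+\dyw(G)=0$ becomes a pointwise (a.e.) identity on a subdomain compactly contained in $D$, apply the formula $\int_u^t\dyw\mathbf{f}(\theta,X_\theta)\,d\theta=\int_u^t a^{-1}\mathbf{f}(\theta,X_\theta)\,d^*X_\theta$ valid for regular fields, conclude that the stopped functional of the mollified data vanishes, and then remove the mollification and the localization (working with the difference $\Phi_1-\Phi_2$ rather than with the two functionals separately is a harmless simplification, and your separation of the localization parameter $k$ from the mollification parameter $n$ is if anything cleaner than the paper's single $\varepsilon$).

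The genuine gap is in the limit passage in $n$, which is exactly the point where the paper does not argue by hand: it invokes a theorem from \cite{Kl:JOTP} asserting that when $f_n\rightarrow f$ and $G_n\rightarrow G$ in $L^2$, the processes $\int_s^\cdot f_n(\theta,X_\theta)\,d\theta+\int_s^\cdot a^{-1}G_n(\theta,X_\theta)\,d^*X_\theta$ converge to the corresponding functional of $(f,G)$ uniformly on $[s,T]$ in $P_{s,x}$-probability \emph{for q.e.\ $(s,x)$}. Your proposed substitute fails at three points. First, the pointwise bound $E_{s,x}\int_s^{\xi^s_k\wedge T}|G_n-G|^2(\theta,X_\theta)\,d\theta\lesssim\|G_n-G\|^2_{L^2(D_k)}$ is false for a fixed $(s,x)$: the kernel $p(s,x,\theta,y)$ blows up like $(\theta-s)^{-d/2}$ as $\theta\downarrow s$, so there is no Green-function factor uniform in $(s,x)$; such bounds only hold after integrating the starting point against $m_1$. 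Second, your control of the drift correction $\alpha^{s,x}$ requires pointwise estimates on $p^{-1}\partial p/\partial y_j$, which are not available for merely measurable coefficients $a$ (Aronson's estimates control $p$, not $\nabla_y p$). Third, and most seriously, even granting convergence in $m_1$-measure over starting points, the upgrade to convergence in $P_{s,x}$-probability for \emph{quasi-every} $(s,x)$ is not a routine subsequence-plus-capacity argument: one must show that the exceptional set of starting points for these stochastic integrals has zero parabolic capacity, which requires energy/capacitary inequalities for the martingale additive functionals involved (including the time-reversed one). That upgrade is precisely the content of the cited result in \cite{Kl:JOTP}; your proposal names it as the "main technical obstacle" but does not supply it, so the argument is incomplete exactly where the real difficulty of the lemma lies.
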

\begin{dow}
Let $D_{\varepsilon}=\{x\in D:\mbox{dist}(x, D^{c})>
\varepsilon\}$ and let $\Phi_{i}^{\varepsilon}=f_{i}^{\varepsilon}
+\dyw (G_{i}^{\varepsilon})$, $i=1,2$, where $f^{\varepsilon}_{i},
G^{\varepsilon}_{i}$ are standard regularizations of $f_{i},
G_{i}$, respectively. It is an elementary check that
\[
\Phi_{1}^{\varepsilon}=\Phi_{2}^{\varepsilon}\mbox{ on }
L^{2}(0,T;H^{-1}(D_{\varepsilon})).
\]
Since $\Phi_{1}^{\varepsilon},\Phi_{2}^{\varepsilon}\in
L^{2}([0,T]\times\BR^d)$, it follows from the above equality that
\[
\Phi_{1}^{\varepsilon}= \Phi_{2}^{\varepsilon}, \quad
m_{1}\mbox{-a.e.}\mbox{ on }[0,T] \times D_{\varepsilon}.
\]
In \cite{K:JTP} it is proved that
\[
\int_{s}^{\cdot} f_{i}^{\varepsilon}(\theta, X_{\theta})\,d\theta
+\int_{s}^{\cdot} a^{-1}G_{i}^{\varepsilon} (\theta,
X_{\theta})\,d^{*}X_{\theta}\rightarrow
\int_{s}^{\cdot}f_{i}(\theta, X_{\theta})\, d \theta
+\int_{s}^{\cdot} a^{-1}G_{i}(\theta, X_{\theta})\,d^{*}X_{\theta}
\]
uniformly on $[s,T]$ in probability $P_{s,x}$ for q.e. $(s,x)\in
[s,T]\times\BR^d$. Let $\xi^{s,\varepsilon}=\inf\{t\ge s,
X_{t}\notin D_{\varepsilon}\}$. Then
\begin{align*}
&\int_{0}^{t\wedge\xi^{s,\varepsilon}}f_{1}^{\varepsilon}(\theta,
X_{\theta})\,d\theta +\int_{0}^{t\wedge\xi^{s,\varepsilon}}
a^{-1}G^{\varepsilon}_{1}(\theta, X_{\theta})\,d^{*}X_{\theta}
=\int_{0}^{t\wedge\xi^{s,\varepsilon}}\Phi_{1}^{\varepsilon}(\theta,
X_{\theta})\,d\theta \\
&\quad =
\int_{0}^{t\wedge\xi^{s,\varepsilon}}\Phi_{2}^{\varepsilon}(\theta,
X_{\theta})\,d\theta=
\int_{0}^{t\wedge\xi^{s,\varepsilon}}f_{2}^{\varepsilon}(\theta,
X_{\theta})\,d\theta +
\int_{0}^{t\wedge\xi^{s,\varepsilon}}a^{-1}G_{2}^{\varepsilon}(\theta,
X_{\theta})\,d^{*}X_{\theta},
\end{align*}
from which the desired result follows, because
$\xi^{s,\varepsilon}\nearrow\xi^{s}$, $P_{s,x}$-a.s. as
$\varepsilon \searrow 0$.
\end{dow}

\subsubsection{Time-homogeneous diffusions}
\label{sec2.2}

In the next sections it will be advantageous to consider a certain
time-homogeneous diffusion determined by $\BX$. The standard
construction of it is as follows. We set
\begin{equation}
\label{eq2.07} \Omega'=\BR_+\times\Omega,\quad
P'_{s,x}(B)=P_{s,x}(\{\omega\in\Omega:(s,\omega)\in B\})
\end{equation}
and consider the proces $\mathbf{X}$ on $\Omega'$ defined as
\begin{equation}
\label{eq2.09}
\mathbf{X}_{t}(s,\omega)=(s+t,X_{s+t}(\omega)),\quad t\ge0.
\end{equation}
Let $\FF'_t=\sigma(\mathbf{X}_u,u\le t)$,
$\FF'_{\infty}=\sigma(\mathbf{X}_u,u<+\infty)$ and let
$\GG'_\infty$ denote the completion of $\FF'_{\infty}$ with
respect to the family $\PP'=\{P'_{\mu}:\mu$ is a probability
measure on $\BR_+\times\BR^d\}$, where
$P'_{\mu}(\cdot)=\int_{\BR_+\times\BR^d}P'_{s,x}(\cdot)\mu(ds\,dx)$.
Finally, let $\GG'_t$ denote the completion of $\FF'_t$ in
$\GG'_{\infty}$ with respect to $\PP'$. Then
$\BX'=\{(\mathbf{X}_{t},P'_{s,x});(s,x)\in\BR_+\times\BR^d\}$ is a
time-homogeneous Markov process with respect to the filtration
$\{\GG'_t\}$ with the transition density
\begin{equation}
\label{eq2.08} P'(t,(s,x),\Gamma)=P(s,x,s+t,\Gamma_{s+t}),
\end{equation}
where $\Gamma_{s+t}=\{x\in\BR^d:(s+t,x)\in\Gamma\}$.

For $h,t\ge0$ we define $\theta_h:\Omega'\rightarrow\Omega'$ and
$\tau(t):\Omega'\rightarrow\BR_+$ by putting
\[
\theta_h\omega'=(s+h,\omega), \quad
\tau(t)(\omega')=s+t=\tau(0)(\omega')+t
\]
for $\omega'=(s,\omega)$. Then
\[
\mathbf{X}_{t}(\theta_h\omega')=\mathbf{X}_{t+h}(\omega'),\quad
\tau(t)\circ\theta_h=\tau(t)+h,\quad h,t\ge0.
\]
Since the decomposition (\ref{eq1.03}) is unique, we may assume
that the two parameter AFs $A,M$ of (\ref{eq1.03}) are defined for
all $0\le s\le t$ and that for every $(s,x)\in\BR_+\times\BR^d$
the decomposition (\ref{eq1.03}) holds true for all $t\ge s$. Set
\begin{equation}
\label{eq2.007} A_t(\omega')=A_{s,s+t}(\omega),\quad
M_t(\omega')=M_{s,s+t}(\omega),\quad t\ge0
\end{equation}
for $\omega'=(s,\omega)$. Since
$P_{s,x}(A_{s,t}=A_{s,u}+A_{u,t},0\le s\le u\le t)=1$ for every
$(s,x)\in\BR_+\times\BR^d$ and $M$ has the same property, it
follows from (\ref{eq2.007}) and the fact that the energy of $A$
equals zero and the energy of $M$ is finite that $A=\{A_t,t\ge0\}$
is a CAF of $\BX'$ of zero energy and $M=\{M_t,t\ge0\}$ is a MAF
of $\BX'$ of finite energy. In particular, $M$ is a
$(\{\GG'_t\},P'_{s,x})$-martingale for every
$(s,x)\in\BR_+\times\BR^d$. Moreover, from (\ref{eq1.03}) and
(\ref{eq2.09}) it follows that for every
$(s,x)\in\BR_+\times\BR^d$,
\[
X_{\tau(t)}=X_{\tau(0)} + A_t + M_t, \quad t \ge 0 \quad P'_{s,x}
\mbox{-a.s.}
\]
(By convention, if $\xi$ is a random variable defined on $\Omega$
we set $\xi(\omega')=\xi(\omega)$ for any
$\omega'=(s,\omega)\in\Omega'$. Thus, in particular,
$X_{\tau(t)}(\omega')=X_{\tau(t)(\omega')}(\omega)$).
%Let us also note that for $z=(s,x)$,
%\[
%T'_tf(z)=E'_zf(\mathbf{X}_{t})=E_{s,x}f(s+t,X_{s+t})=T_{s,s+t}f(s+t,x)
%\]
%and
%\[
%R'_{\alpha}f(z)=\int^{+\infty}_0e^{-\alpha t}T'_tf(z)\,dt
%=\int^{+\infty}_0e^{-\alpha t}T_{s,s+t}f(s+t,x)\,dt
%=R_{\alpha}f(s,x),
%\]
%i.e.

Set
\begin{equation}
\label{eq1.8}
B_{t}=\int_{0}^{t}\sigma^{-1}(\mathbf{X}_{\theta})\,dM_{\theta},
\quad t\ge 0.
\end{equation}
By (\ref{eq1.6}) and (\ref{eq2.007}),
\[
\langle M^i,M^j\rangle_t=\langle
M^i_{s,\cdot},M^j_{s,\cdot}\rangle_{s+t}
=\int^{s+t}_sa_{ij}(\theta,X_{\theta})\,d\theta, \quad t\ge0,\quad
P'_{s,x}\mbox{-a.s.}
\]
for every $(s,x)\in\BR_+\times\BR^d$. Therefore $B$ is a Brownian
motion under $P'_{s,x}$ for every $(s,x)\in\BR_+\times\BR^d$. In
fact, by \cite[Theorem 12]{Lejay}, it is an $\{\GG'_t\}$-Brownian motion.

\subsubsection{Capacity and soft measures}
\label{sec2.3}

Let $\WW$ be the space of  $u\in L^2(\BR_+;H^1_0(\BRD))$ such that
$\frac{\partial u}{\partial t}\in L^2(\BR_+;H^{-1}(\BRD))$ endowed
with the usual norm $\|u\|_{\WW}=\|u\|_{L^2(\BR_+;H^1_0(\BRD))}
+\|\frac{\partial u}{\partial t}\|_{L^2(\BR_+;H^{-1}(\BRD))}$. We
define the parabolic capacity of an open set
$U\subset\BR_+\times\BR^d$ as
\[
\mbox{cap}(U)=\inf\{\|u\|_{\WW}:u\in\WW,u\ge\mathbf{1}_U\mbox{
a.e. in } \BR_+\times\BR^d\}.
\]
The parabolic capacity of a Borel set $B\subset\BR_+\times\BR^d$
is defined as
\[
\mbox{cap}(B)=\inf\{\mbox{cap}(U):U\mbox{ is an open subset of }
\BR_+\times\BR^d,B\subset U\}.
\]
It is known that for a Borel $B\subset\BR_+\times\BR^d$,
$\mbox{cap}(B)=0$ iff
\[
\int_{\BR_+}P_{s,m}(\exists t>s:(t,X_t)\in B)\,ds=0,
\]
where $P_{s,m}(\cdot)=\int_{\BRD}P_{s,x}(\cdot)\,dx$ and $m$
denotes the Lebesgue measure on $\BR^d$ (see the argument
following Eq. (5.2) in \cite{Oshima2}).

We will say that some property is satisfied quasi-everywhere (q.e.
for short) if it is satisfied except for some Borel subset of
$\BR_+\times\BR^d$ of capacity zero.

Let $\mu$ be a signed Borel  measure on $\BR_+\times\BR^d$ and let
$|\mu|$ denote the total variation of $\mu$.  We say that $\mu$ is
smooth if $|\mu|(B)=0$ for every Borel set
$B\subset\BR_+\times\BR^d$ such that $\mbox{cap}(B)=0$ and there
exists an ascending sequence $\{F_{n}\}$ of closed subsets of
$\BR_+\times\BR^d$ such that for every compact
$K\subset\BR_+\times\BR^d$, $\lim_{n\rightarrow
\infty}\mbox{cap}(K\setminus F_{n})\rightarrow 0$,
$|\mu|(F_{n})<\infty$, $n\ge 1$, and
$\mu(\BR_+\times\BR^d\setminus \bigcup F_{n})=0$. We say that a
smooth measure $\mu$ is soft if $\mu$ is a Radon measure. We say
that $\mu$ is a soft measure on $V\subset\BR_+\times\BR^d$ if it
is soft and $\mu(\BR_+\times\BR^d\setminus V)=0$.

In what follows by $S$ we will denote the set of all positive
smooth measures on $\BR_+\times\BR^d$ and by $S_0$ the subset of
$S$ consisting of all measures of finite energy integral (see,
e.g., \cite[Section 5]{Oshima} for the definition). By
$\MM_b(D_T)$ we denote the set of all signed Borel measures on
$\BR_+\times\BR^d$  having bounded variation and such that
$\mu(\BR_+\times\BR^d\setminus D_T)=0$. By $\MM^+_{b}(D_T)$ we
denote the subset of $\MM_{b}(D_T)$ consisting of all positive
measures. We write $\|\mu\|_{TV}$ for the total variation norm of
$\mu\in\MM_b(D_T)$. By $\MM_{0}(D_T)$ we denote the set of all
soft measures on $D_T$, by $\MM_{0,b}(D_T)$ the set of all bounded
soft measures on $D_T$ and by $\MM^+_{0,b}(D_T)$ the subset of
$\MM_{0,b}(D_T)$ consisting of all positive measures.

Let us note that in the literature soft measures are usually
defined on $(0,T)\times D$. In the present paper we consider soft
measures on $(0,T]\times D$ because in general the obstacle
reaction measure is concentrated on that set.

\subsubsection{Additive functionals and soft measures}

Let $E_{s,x}$ (resp. $E'_{s,x}$) denote the expectation with
respect to $P_{s,x}$ (resp. $P'_{s,x}$). Let us recall that a
positive additive functional $A$ of $\BX'$ and a positive soft
measure $\mu$ on $\BR_+\times\BR^d$ are in the Revuz
correspondence if
\begin{equation}
\label{Revuz} \int_{\BR^+\times\BR^d} f\,d\mu=\lim_{\alpha\rightarrow
\infty}\alpha \int_{\BR_+\times\BR^d}(E'_{s,x}\int_{0}^{\infty}
e^{-\alpha t} f(\mathbf{X}_{t})\,dA_{t})\,dm_1(s,x)
\end{equation}
for every $f\in\BB^+(\BR_+\times\BR^d)$.  If
$\langle\mu,1\rangle<+\infty$ then $A$ is called integrable. By
Theorems 6.4.7 and 6.4.9 in \cite{Oshima1}, for every $\mu\in
S_{0}$ there exists a unique positive natural additive functional
(NAF) $A$  of $\mathbb{X}'$ in the Revuz correspondence with $\mu$
(see also (5.9), (5.10) in \cite{Oshima}). Since each measure
$\mu\in S$ may be approximated by measures in $S_{0}$ (see
\cite[Theorem 5.6]{Oshima}), repeating step by step the proof of
\cite[Theorem 4.1.16]{Oshima1} (see also \cite[Theorem
5.6]{Oshima}) one can show that for every $\mu\in S$ there exists
a unique positive NAF $A$ of $\mathbb{X}'$ in the Revuz
correspondence with $\mu$. In what follows we will denote it by
$A^{\mu}$. For $\mu\in\MM_0(D_T)$ we put
$A^\mu=A^{\mu^+}-A^{\mu^-}$. Let us also note that if $\mu$
belongs to the subset of $S$ consisting of smooth measures with
respect to the capacity determined by the coercive part of the
time dependent form associated with $\BX'$ (see \cite{RT} for
details), then by \cite[Theorem 2.2]{RT} the AF $A^{\mu}$ is
continuous. In fact, in this  case an explicit representation of
$A^{\mu}$ is known (see Theorem 2.1 and Corollary 4.4 in
\cite{K:JTP}).

Let
\[
\zeta = \inf\{t\ge 0,\,\mathbf{X}_{t}\notin\BR_+\times D\},\quad
\zeta_{\tau}=\zeta\wedge (T-\tau(0))
\]
and let $p'_D$ denote the transition density of the process $\BX'$
killed at first exit from $\BR_+\times D$, i.e.
\[
p'_D(t,(s,x),z)=p'(t,(s,x),z)
-E'_{s,x}[\zeta<t,p'(t-\zeta,\mathbf{X}_{\zeta},z)],
\]
where $p'$ is the transition density of $\BX'$. It is known that
$A^{\mu}$ corresponds to $\mu$ iff for quasi-every $(s,x)\in D_T$,
\begin{equation}
\label{meyer}
E'_{s,x}\int_{0}^{\zeta_{\tau}}f(\mathbf{X}_t)\,dA^{\mu}_{t}
=\int\!\!\int_{[0,T-s]\times D}f(z)p'_{D}(t,(s,x),z)\,d\mu(z)\,dt
\end{equation}
for every $f\in\BB^+(D_T)$ (see \cite{Meyer}).

In \cite{Kl:SPA} a correspondence between two parameter additive
functionals of $\BX$ and soft measures on $D_{T}$ is considered.
An AF $\{A^{\mu}_{s,t}, 0\le s\le t\le T\}$ of $\BX$ corresponds
to a bounded soft measure $\mu$ on $D_T$ in the sense of
\cite{Kl:SPA} if for q.e. $(s,x)\in D_T$,
\begin{equation}
\label{eq2.06} E_{s,x}\int_{s}^{\xi^s\wedge T}
f(t,X_{t})\,dA^{\mu}_{s,t} =\!\int_{[s,T]\times D}
f(t,y)p_{D}(s,x,t,y)\,d\mu(t,y)
\end{equation}
for every $f\in\BB^+(D_T)$, where $\xi^s$ is defined by
(\ref{eq1.09}) and $p_{D}$ is the transition density of the
process $\BX$ killed at the first exit from $D$, i.e.
\[
p_D(s,x,t,y)=p(s,x,t,y)-E_{s,x}[\xi^s<t,p(t-\xi^s,X_{\xi^s},y)].
\]
Let us define $A=\{A_t,t\ge 0\}$ as
\begin{equation}
\label{eq2.009} A_t(\omega')=A^{\mu}_{s,s+t}(\omega),\quad
\omega'=(s,\omega),\,t\ge0.
\end{equation}
Using (\ref{eq2.07})--(\ref{eq2.08}) and the fact that
$\zeta_{\tau}(\omega')=(\xi^s(\omega)-s)\wedge(T-s)$ one can show
that (\ref{eq2.06}) is satisfied iff (\ref{meyer}) is satisfied
with $A^{\mu}$ replaced by $A$. Therefore a two parameter AF
$\{A^{\mu}_{s,t}\}$ of $\BX$ corresponds to $\mu$ in the sense of
\cite{Kl:SPA} iff the one-parameter AF $A$ of $\BX'$ defined by
(\ref{eq2.009})  corresponds to $\mu$ in the sense of
(\ref{Revuz}).

\begin{lm}
\label{lm.qe} Assume that $\mu\in \MM^{+}_{0,b}(D_{T})$. Then for
q.e. $(s,x)\in D_T$,
\[
E'_{s,x}\int_{0}^{\zeta_{\tau}} dA^{\mu}_{t}<+\infty.
\]
\end{lm}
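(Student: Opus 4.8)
The plan is to identify the quantity $u(s,x):=E'_{s,x}\int_{0}^{\zeta_{\tau}}dA^{\mu}_{t}$ with the parabolic potential of $\mu$, to show first that this potential is finite $m_1$-a.e., and then to upgrade $m_1$-a.e.\ finiteness to finiteness q.e.\ by using that $u$ is an excessive function of $\BX'$ together with the probabilistic description of capacity.

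First I would set $f\equiv1$ in the Revuz-type identity (\ref{meyer}) (equivalently (\ref{eq2.06}) via (\ref{eq2.009})), which holds for q.e. $(s,x)$, to get
\[
u(s,x)=\int\!\!\int_{[s,T]\times D}p_{D}(s,x,t,y)\,d\mu(t,y).
\]
Because sets of zero capacity are $m_1$-null, this representation is valid $m_1$-a.e. Integrating it over $D_T$, applying Fubini (all integrands are nonnegative) and the Gaussian (Aronson) upper bound, which yields $\int_{\BR^d}p(s,x,t,y)\,dx\le C$ and hence $\int_{D}p_{D}(s,x,t,y)\,dx\le C$ uniformly in $s<t$, I obtain
\[
\int_{D_T}u\,dm_1=\int_{(0,T]\times D}\Big(\int_{0}^{t}\!\!\int_{D}p_{D}(s,x,t,y)\,dx\,ds\Big)\,d\mu(t,y)\le CT\,\mu(D_{T})<+\infty.
\]
Thus $u\in L^{1}(D_T;m_1)$, so the set $N:=\{(s,x)\in D_T:u(s,x)=+\infty\}$ satisfies $m_1(N)=0$.

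The main point is to pass from $m_1(N)=0$ to $\mbox{cap}(N)=0$. Here I would use that $u$, being the potential of the PCAF $t\mapsto A^{\mu}_{t\wedge\zeta_{\tau}}$ of $\BX'$, is excessive: $t\mapsto u(\mathbf{X}_{t\wedge\zeta_{\tau}})$ is a right-continuous supermartingale and $u$ is finely continuous, so $N$ is finely closed. Writing $T_N$ for the hitting time of $N$, optional stopping at $T_N\wedge\zeta_{\tau}$ gives $u(s,x)\ge E'_{s,x}[u(\mathbf{X}_{T_N});\,T_N<\zeta_{\tau}]$; since $u\equiv+\infty$ on $N$ and $\mathbf{X}_{T_N}\in N$ on $\{T_N<\zeta_{\tau}\}$ by fine-closedness and quasi-left-continuity, this forces $u(s,x)=+\infty$ whenever $P'_{s,x}(T_N<\zeta_{\tau})>0$. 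Consequently $\{(s,x):P'_{s,x}(T_N<\zeta_{\tau})>0\}\subseteq N$. If $\mbox{cap}(N)>0$, the probabilistic characterization of capacity gives $\int_{\BR_+}P_{s,m}(\exists t>s:(t,X_t)\in N)\,ds>0$, so $N$ is reached with positive probability from a set of $(s,x)$ of positive $m_1$-measure; by the previous inclusion this set lies in $N$, whence $m_1(N)>0$, contradicting $m_1(N)=0$. Therefore $\mbox{cap}(N)=0$, i.e. $u<+\infty$ q.e., which is the claim.

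The hardest step is the last paragraph: making rigorous that $\mathbf{X}_{T_N}\in N$ on $\{T_N<\zeta_{\tau}\}$ (requiring fine-closedness of $N$ and quasi-left-continuity of $\BX'$) and the optional-stopping inequality for the excessive function $u$ in the presence of the killing at $\zeta_{\tau}$; the $L^1$ estimate and the representation of $u$ are routine once the Gaussian bound is invoked.
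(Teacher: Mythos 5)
Your representation of $u$ via (\ref{meyer}), the Aronson-bound computation giving $u\in L^1(D_T;m_1)$, and the excessiveness/fine-continuity considerations are all sound (for comparison: the paper gives no internal argument at all here, it simply cites \cite{Kl:SPA}, so yours is a genuinely self-contained attempt). However, the final deduction has a real gap. The inclusion you actually prove is
\[
\{(s,x):P'_{s,x}(T_N<\zeta_{\tau})>0\}\subseteq N,
\]
i.e. it concerns hitting $N$ \emph{before the killing time} $\zeta_{\tau}$, whereas the probabilistic characterization of capacity quoted from the paper involves the \emph{unkilled} space-time process: $\mbox{cap}(N)>0$ only tells you that the set $B=\{(s,x):P_{s,x}(\exists\, t>s:(t,X_t)\in N)>0\}$ has positive $m_1$-measure. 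Since $B\supseteq\{(s,x):P'_{s,x}(T_N<\zeta_{\tau})>0\}$, and this inclusion can be strict --- the diffusion may leave $D$ through the lateral boundary before time $T$, wander back into $D$, and only then hit $N$; such starting points lie in $B$ but your argument says nothing about them --- you cannot conclude $B\subseteq N$, hence cannot conclude $m_1(N)>0$. Note also that your optional-stopping inequality is only valid for the process killed at $\zeta_{\tau}$ (that is the process for which $u$ is excessive), so it cannot simply be applied at the unrestricted hitting time to repair this.

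The gap is fixable by running your argument with the potential of $A^{\mu}$ for the \emph{unkilled} process, $U(s,x):=E'_{s,x}\int_0^{\infty}dA^{\mu}_t$, in place of $u$. Since $\mu$ is carried by $(0,T]\times D$, the analogue of (\ref{meyer}) without killing (which follows from (\ref{Revuz}) and \cite{Meyer}) gives $U(s,x)=\int\!\!\int_{[s,T]\times D}p(s,x,t,y)\,d\mu(t,y)$ q.e., with $p$ the full transition density, and the same Aronson estimate yields $U\in L^1([0,T]\times\BR^d;m_1)$, so $N':=\{U=+\infty\}$ is $m_1$-null. Now $U$ is excessive for $\BX'$ itself, so your fine-closedness and optional-stopping argument applies at the unrestricted hitting time $T_{N'}$ and gives precisely $\{(s,x):P_{s,x}(\exists\, t>s:(t,X_t)\in N')>0\}\subseteq N'$, which matches the capacity criterion and forces $\mbox{cap}(N')=0$. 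Since $u\le U$ everywhere, $\{u=+\infty\}\subseteq N'$, and the lemma follows. With this modification your approach works.
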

\begin{dow}
Follows from \cite[Theorem 6.2]{Kl:SPA}.
\end{dow}
\medskip

We say that a measurable function $f:D_T\rightarrow\BR$ is
quasi-integrable if the function $\BR_+\ni t\mapsto
f(\mathbf{X}_{t})$ belongs to $L^1_{loc}(\BR_+)$ $P_{s,x}$-a.s.
for q.e. $(s,x)\in D_T$, where we put $f(s,x)=0$ for $(s,x)\in
D_{T}^{c}$. The set of all quasi-integrable functions on $D_T$
will be denoted by $qL^1(D_T)$.

\begin{wn}
\label{wn.ql} If $f\in L^{1}(D_{T})$ then $f\in qL^{1}(D_{T})$.
\end{wn}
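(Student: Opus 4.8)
The plan is to exhibit $|f|$ as the density of a bounded positive soft measure and to deduce the claim from Lemma \ref{lm.qe}, applied not on $D$ but on an exhausting family of balls. The one delicate point is that a single application of Lemma \ref{lm.qe} on $D$ controls the occupation of $D_T$ only up to the first spatial exit from $D$, whereas quasi-integrability requires control over the whole time horizon $[s,T]$, including the re-entries of $X$ into $D$.

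Since $t\mapsto f(\mathbf{X}_t)\in L^1_{\mathrm{loc}}(\BR_+)$ iff the same holds for $|f|$, I may assume $f\ge0$. First I would check that $\mu:=f\,m_1$, with $f$ extended by zero to $[0,T]\times\BRD$, is a bounded positive soft measure. It is Radon and bounded because $\mu([0,T]\times\BRD)=\int_{D_T}f\,dm_1=\|f\|_{L^1(D_T)}<\infty$, and it charges no set of zero capacity: by the probabilistic description of $\mathrm{cap}$ recalled above, a Borel set of zero capacity has zero Lebesgue measure (the process spends positive time with positive probability in any set of positive Lebesgue measure), so $\mu\ll m_1$ charges no such set; as $m_1(\{0\}\times\BRD)=0$, the measure $\mu$ is carried by $(0,T]\times\BRD$. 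Moreover, for an absolutely continuous measure the associated CAF of $\BX'$ is the occupation functional $A^{\mu}_t=\int_0^t f(\mathbf{X}_\theta)\,d\theta$; this is standard and can be read off from the Revuz correspondence (\ref{Revuz}), using that $m_1$ is invariant for $\BX'$.

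The key step is to apply Lemma \ref{lm.qe} on balls. Fix $R>0$ with $D\subset B_R:=\{|x|<R\}$. Then $\mu\in\MM^+_{0,b}([0,T]\times B_R)$, so Lemma \ref{lm.qe} (with $D$ replaced by $B_R$) gives, for q.e. $(s,x)$,
\[
E'_{s,x}\int_0^{\zeta^R_\tau}f(\mathbf{X}_\theta)\,d\theta=E'_{s,x}\int_0^{\zeta^R_\tau}dA^{\mu}_t<+\infty,
\]
where $\zeta^R_\tau=(\xi^{R,s}-s)\wedge(T-s)$ and $\xi^{R,s}$ is the first exit of $X$ from $B_R$ after $s$; hence $\int_0^{\zeta^R_\tau}f(\mathbf{X}_\theta)\,d\theta<\infty$ $P'_{s,x}$-a.s. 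I would then run this over a sequence $R_n\uparrow\infty$: intersecting the countably many exceptional sets, for q.e. $(s,x)$ all these bounds hold $P'_{s,x}$-a.s. simultaneously. Since under $P'_{s,x}$ the path $[0,T-s]\ni\theta\mapsto X_{s+\theta}$ is continuous, hence bounded, one has $P'_{s,x}\big(\bigcup_n\{\xi^{R_n,s}\ge T\}\big)=1$; and on $\{\xi^{R_n,s}\ge T\}$ we get $\zeta^{R_n}_\tau=T-s$, so $\int_0^{T-s}f(\mathbf{X}_\theta)\,d\theta<\infty$.

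Finally, because $f$ vanishes off $D_T$ and the time coordinate of $\mathbf{X}_\theta$ equals $s+\theta$, we have $f(\mathbf{X}_\theta)=0$ for $\theta>T-s$; therefore $\int_0^t f(\mathbf{X}_\theta)\,d\theta\le\int_0^{T-s}f(\mathbf{X}_\theta)\,d\theta<\infty$ for every $t$, i.e. $t\mapsto f(\mathbf{X}_t)\in L^1_{\mathrm{loc}}(\BR_+)$ $P'_{s,x}$-a.s. for q.e. $(s,x)\in D_T$, which is exactly $f\in qL^1(D_T)$. I expect the main obstacle to be precisely the passage from the first-exit control furnished by Lemma \ref{lm.qe} to control over the full time horizon; enlarging the domain to balls and exhausting $\BRD$ is the device that circumvents the re-entry problem, the remaining ingredients (softness of $f\,m_1$ and the identification of its CAF) being routine.
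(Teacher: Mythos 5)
Your proof is correct. One thing worth pointing out up front: the paper states Corollary~\ref{wn.ql} with no proof at all — it is offered as an immediate consequence of Lemma~\ref{lm.qe} (and, alternatively, of the Remark that follows it, since taking $G_{\varepsilon}=\emptyset$ in that Remark's criterion, which rests on the analysis of \cite[Remark 4.4]{KR}, yields the claim at once). Your argument uses the same key ingredient the paper evidently intends — identify $f\cdot m_{1}$ (with $f$ extended by zero) as a bounded positive soft measure whose associated CAF is the occupation functional $\int_{0}^{t}f(\mathbf{X}_{\theta})\,d\theta$, then invoke Lemma~\ref{lm.qe} — but you correctly isolate a point that a naive reading glosses over: applied on $D$ itself, Lemma~\ref{lm.qe} controls the occupation integral only up to $\zeta_{\tau}$, i.e.\ up to the first exit of $X$ from $D$ (capped at $T-\tau(0)$), whereas quasi-integrability requires control on all of $[0,T-s]$, during which the process may re-enter $D$. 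Your fix — apply the lemma with $D$ replaced by balls $B_{R_{n}}\supset D$ exhausting $\BRD$ (legitimate, since $D$ is an arbitrary bounded open set in the paper's framework), intersect the countably many capacity-null exceptional sets, and use boundedness of the continuous path on $[s,T]$ to find $n$ for which the exit time from $B_{R_{n}}$ exceeds $T$, so that the cap equals $T-s$ — is exactly the device needed to pass from first-exit control to full-horizon control. The auxiliary verifications are also sound: softness of $f\cdot m_{1}$ follows because sets of zero parabolic capacity are $m_{1}$-null (by the probabilistic description of the capacity and strict positivity of the transition density), and the Revuz identification of the occupation functional is the same computation the paper itself invokes in the Remark following the corollary. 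In short, your proposal is a complete and correct filling-in of an argument the paper leaves implicit, and it repairs the only genuine subtlety (the re-entry problem) in the "obvious" deduction from Lemma~\ref{lm.qe}.
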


\begin{uw}
Analysis similar to that in \cite[Remark 4.4]{KR} shows that if
for every $\varepsilon>0$ there exists an open set
$G_{\varepsilon}\subset D_{T}$ such that
$\mbox{cap}(G_{\varepsilon})<\varepsilon$ and $f_{|D_{T}\setminus
G_{\varepsilon}}\in L^{1}(D_{T}\setminus G_{\varepsilon})$ then
$f\in qL^{1}(D_{T})$. Since $D_{T}$ is bounded, the reverse
implication is also true. Indeed, assume that $f\in
qL^{1}(D_{T})$, $f\ge0$. Then extending $f$ on
$\mathbb{R}_{+}\times\mathbb{R}^{d}$ by putting zero outside
$D_{T}$ we get that $A_{t}=\int_{0}^{t}f(\mathbf{X}_{r})\,dr$ is
finite for every $t\ge 0$, which implies that it is a positive
continuous AF of $\BX'$. Directly from the definition of the Revuz
correspondence it follows that $f\cdot m_{1}$ corresponds to the
AF $A$. This implies in particular that $f\cdot m_{1}$ is a smooth
measure. Therefore for every compact $K\subset
\mathbb{R}_{+}\times\mathbb{R}^{d}$ and for every $\varepsilon >0$
there exists an open set $U_{\varepsilon}\subset
\mathbb{R}_{+}\times\mathbb{R}^{d}$ such that
cap$(U_{\varepsilon})\le \varepsilon$ and $f\in L^{1}(K\setminus
U_{\varepsilon})$. Taking $K=\overline{D_{T}}$ we get the desired
result.
\end{uw}

\begin{lm}
\label{lm2.3} Let $\mu$ be a positive smooth measure on $D_T$ and
let $\nu\in\MM_{0,b}^{+}(D_{T})$. If
\[
E'_{s,x} \int_{0}^{\zeta_{\tau}}dA_{r}^{\mu}\le E'_{s,x}
\int_{0}^{\zeta_{\tau}}dA_{r}^{\nu}
\]
for q.e. $(s,x) \in D_{T}$ then $\|\mu\|_{TV} \le \|\nu\|_{TV}$.
\end{lm}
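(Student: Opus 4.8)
The quantity to be controlled is a total variation of positive measures, so $\|\mu\|_{TV}=\mu(D_T)$ and $\|\nu\|_{TV}=\nu(D_T)$, and it is enough to prove $\mu(D_T)\le\nu(D_T)$. The plan is first to rewrite the hypothesis in ``Green kernel'' form: taking $f\equiv1$ in (\ref{eq2.06}) (equivalently in (\ref{meyer})) one has, for $\gamma\in\{\mu,\nu\}$,
\[
U\gamma(s,x):=E'_{s,x}\int_0^{\zeta_\tau}dA^{\gamma}_r=\int_{[s,T]\times D}p_D(s,x,t,y)\,d\gamma(t,y),
\]
so the assumption reads $U\mu\le U\nu$ q.e. on $D_T$. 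The idea is then to integrate this pointwise inequality in the variable $(s,x)$ against a well chosen family of nonnegative weights and, after transposing the kernel by Fubini, to let the weights turn the two sides into $\mu(D_T)$ and a quantity $\le\nu(D_T)$.

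Concretely, for $w\in\BB^+(D_T)$ I would integrate $U\mu\le U\nu$ against $w\,dm_1$; this is legitimate because a set of zero capacity is $m_1$-negligible, so the inequality holds $m_1$-a.e. Tonelli's theorem then yields
\[
\int_{D_T}\hat Uw\,d\mu\le\int_{D_T}\hat Uw\,d\nu,\qquad \hat Uw(t,y):=\int_{[0,t]\times D}p_D(s,x,t,y)\,w(s,x)\,ds\,dx,
\]
where $\hat U$ is the potential operator of the $m_1$-dual (time reversed) process killed on leaving $D_T$; its kernel is simply $p_D(s,x,t,y)$ read as a function of $(s,x)$. Thus the whole statement reduces to producing weights $w_\alpha\in\BB^+(D_T)$ with $\hat Uw_\alpha\le1$ on $D_T$ and $\hat Uw_\alpha\to1$ q.e. on $(0,T]\times D$ as $\alpha\to\infty$: granting this, for every $\alpha$ we get $\int_{D_T}\hat Uw_\alpha\,d\mu\le\int_{D_T}\hat Uw_\alpha\,d\nu\le\nu(D_T)$, and since $0\le\hat Uw_\alpha\le1$ with $\hat Uw_\alpha\to1$ $\mu$-a.e., Fatou's lemma gives $\mu(D_T)\le\liminf_\alpha\int_{D_T}\hat Uw_\alpha\,d\mu\le\nu(D_T)$, as required (this also forces $\mu$ to be finite).

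The construction of $w_\alpha$ is the heart of the matter and, I expect, the main obstacle, because the constant function $\mathbf1$ is not itself a potential (potentials of the killed process vanish on $[0,T]\times\partial D$), so it must be approximated from below. Here I would use that $A_t$ is self-adjoint and in divergence form, which gives $\int_{\BR^d}p(s,x,t,y)\,dx=1$ and hence
\[
\int_Dp_D(s,x,t,y)\,dx\le1,\qquad \int_Dp_D(s,x,t,y)\,dx\to1\ \ \text{as }s\uparrow t\ \ (y\in D).
\]
The first bound says that $\mathbf1$ is excessive for the killed dual semigroup $\hat P^D_r\mathbf1(t,y)=\int_Dp_D(t-r,x,t,y)\,dx$, whence the $\alpha$-potentials $\alpha\hat R^D_\alpha\mathbf1$ increase to $\mathbf1$; the second (a short time, interior estimate) is what guarantees that this limit is actually $1$ at q.e. point of the open set $(0,T]\times D$. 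Taking $w_\alpha=\alpha(\mathbf1-\alpha\hat R^D_\alpha\mathbf1)\ge0$ one has $\hat Uw_\alpha=\alpha\hat R^D_\alpha\mathbf1$, which has exactly the two properties used above. The only points to be checked carefully are the short-time limit in the interior (where self-adjointness enters) and the fact that $\mu,\nu$ do not charge $[0,T]\times\partial D$, so that $\hat Uw_\alpha\to1$ holds $\mu$- and $\nu$-almost everywhere.
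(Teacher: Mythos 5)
Your proof is correct, and it shares the paper's overall skeleton: rewrite the hypothesis as the q.e. inequality between potentials (the paper's (\ref{eq.fun})), transpose the kernel against nonnegative charges via Tonelli, and conclude by Fatou using a family of dual potentials bounded by $\mathbf{1}$ and converging to $\mathbf{1}$ on $(0,T]\times D$. The genuine difference lies in how that family is built, which is the heart of either argument. The paper takes $e_k$ to be the minimal variational solution of the obstacle problem for the adjoint (time-reversed) evolution with obstacle $\mathbf{1}_{D^k_T}$, $D^k_T=[1/k,T]\times D^k$; then $\beta_k=\frac{\partial e_k}{\partial t}-A_te_k$ is a bounded \emph{soft} measure (via the identification $\MM^{+}_{b}\cap\WW'=\MM^{+}_{0,b}\cap\WW'$ from \cite{DPP}), $e_k$ is its dual potential by Aronson's representation (\ref{eq.aron}), and minimality gives $0\le e_k\le 1$ with $e_k=1$ on $D^k_T$, hence $e_k\nearrow 1$ q.e.; softness of $\beta_k$ is exactly what allows the q.e. inequality (\ref{eq.fun}) to be integrated against $d\beta_k$. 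You instead use the resolvent approximation $\hat{U}w_\alpha=\alpha\hat{R}^D_\alpha\mathbf{1}$, whose charges $w_\alpha\,dm_1$ are absolutely continuous; this genuinely simplifies the capacity bookkeeping (you only need that capacity-zero sets are $m_1$-null, not the \cite{DPP} theorem), and it trades the inputs \cite{MP}, \cite{Aronson} for two kernel facts: $\int_D p_D(s,x,t,y)\,dx\le 1$ and $\int_D p_D(s,x,t,y)\,dx\to 1$ as $s\uparrow t$ for $y\in D$. Both facts are true, but proving them is where your remaining work sits: with measurable time-dependent coefficients they rest on the time-reversal duality $p(s,x,t,y)=\hat{p}(T-t,y,T-s,x)$ (valid here because $a$ is symmetric, by uniqueness of fundamental solutions) together with Aronson's Gaussian upper bound to localize the mass near an interior point $y$; so your route relocates, rather than removes, the outsourced input. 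Your second flagged point is immediate in this paper's conventions: a soft measure on $(0,T]\times D$ by definition charges neither the complement of $(0,T]\times D$ nor sets of capacity zero, so convergence q.e. on $(0,T]\times D$ indeed suffices for the Fatou step.
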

\begin{dow}
By the assumptions, for q.e. $(s,x)\in D_{T}$,
\begin{equation}
\label{eq.fun} \int_0^T\!\!\int_{D}p_{D}(s,x,r,y)\,d\mu(r,y)
\le \int_0^T\!\!\int_{D}p_{D}(s,x,r,y)\,d\nu(r,y).
\end{equation}
Let $D^{k}=\{x\in D;\mbox{dist}(x,\partial D)>1/k\}$,
$D^{k}_{T}=[1/k,T]\times D^{k}$ and
$h_{k}=\mathbf{1}_{\{D^{k}_{T}\}}$. By \cite{MP}, for every $k$
there exists a minimal solution $e_{k}\in L^{2}(0,T;H^{1}(D))$, in
the variational sense, of the obstacle problem
\[
\left\{
\begin{array}{l}\frac{\partial e_{k}}{\partial t}-A_{t}e_{k}\ge
0,\quad \frac{\partial e_{k}}{\partial t}-A_{t}e_{k}=0
\mbox{ on }\{e_{k}>h_{k}\}, \smallskip\\
e_{k}\ge h_{k}, \smallskip\\
e_{k}(0, \cdot)=0,\quad e_{k}(t,\cdot)_{|\partial D}=0,\quad
t\in(0,T).
\end{array}
\right.
\]
In particular $\beta_{k}=\frac{\partial e_{k}}{\partial t}
-A_{t}e_{k}$ belongs to $\MM^{+}_{b}(D_{T})\cap \WW'(D_{T})$,
where $\WW'(D_T)$ is the dual space of $\WW(D_T)$. In fact,
$\beta_k\in\MM^{+}_{0,b}(D_{T})\cap \WW'(D_{T})$ because it is
known that $\MM^{+}_{b}(D_{T})\cap
\WW'(D_{T})=\MM^{+}_{0,b}(D_{T})\cap \WW'(D_{T})$ (see, e.g.,
\cite{DPP}). By \cite{Aronson}, for q.e. $(s,x)\in D_{T}$,
\begin{equation}
\label{eq.aron} e_{k}(s,x)=\int_0^T\!\!\int_{D}
p_{D}(r,y,s,x)\,d\beta_{k}(r,y).
\end{equation}
From (\ref{eq.fun}), (\ref{eq.aron}) and the fact that $0\le e_{k}
\le1$ q.e. on $D_{T}$ it follows that
\begin{equation}
\label{eq2.011} \int_{D_{T}}e_{k}(s,x)\,d\mu(s,x) \le
\int_{D_{T}}e_{k}(s,x)\,d\nu(s,x) \le \|\nu\|_{TV}.
\end{equation}
The fact that $0\le e_{k}\le 1$ q.e. follows from the fact that
$e_{k}$ is the minimal potential majorizing $h_{k}$ (see
\cite[Theorem 3.2.28]{BL}). Indeed, it is an elementary check that
$e_{k}\wedge 1$ is a potential majorizing $h_{k}$. Therefore
$e_{k}\le e_{k}\wedge 1$, which implies that $e_{k}\le 1$. Of
course $e_{k}\ge 0$ since $e_{k}\ge h_{k}$. From this we get in
particular  that $e_{k}(s,x)=1$ for $(s,x)\in D^{k}_{T}$.
Therefore $e_{k}(s,x)\nearrow1$ q.e. on $D_{T}$. Consequently,
letting $k\rightarrow\infty$ in (\ref{eq2.011}) and using Fatou's
lemma we get the desired result.
\end{dow}

\nsubsection{Backward stochastic differential equations}
\label{sec3}

Results of this section together with some results on BSDEs proved
in \cite{Kl:BSM} form the basis for our main theorems on the
obstacle problem (\ref{eq1.1}).

\subsubsection{General BSDEs}
\label{sec3.1}

Suppose we are given a filtered probability space $(\Omega,
\mathcal{G}, \{\mathcal{G}_{t}\}_{t\ge0},P)$. We will need the
following spaces of processes.

$\VV$ (resp. $\VV^+$) is the space of all progressively measurable
c\`adl\`ag processes $V$ of finite variation (resp. increasing
processes) such that $V_0=0$. $\VV^1$ is the space of all
processes $V\in\VV$ such that $E|V|_T<+\infty$, where $|V|_T$
denotes the total variation of $V$ on $[0,T]$.

$\DM^q$, $q>0$, is the space all progressively measurable
c\`adla\`g processes $\eta$ such that $E\sup_{0\le t\le T}
|\eta_t|^q<+\infty$ for every $T>0$.

$M^q$, $q>0$, is the space of all progressively measurable
processes $\eta$ such that
$E(\int^T_0|\eta_t|^2\,dt)^{q/2}<+\infty$ for every $T>0$.

Let us also recall that a c\`adl\`ag adapted process $\eta$ is
said to be of Doob's class (D) if the collection
$\{\eta_{\tau}:\tau$ is a finite $\{\GG_t\}_{t\ge0}$-stopping
time\} is uniformly integrable.

Let $B$  be a  $\{\mathcal{G}_{t}\}$-Brownian motion, $\sigma$ be
a bounded stopping time, $\xi$ be a
$\mathcal{G}_{\sigma}$-measurable random variable, $A\in\VV$ and
let  $f:\Omega \times \mathbb{R}_{+}\times\mathbb{R}\times
\mathbb{R}^{d}\rightarrow\mathbb{R}$ be a function such that
$f(\cdot,y,z)$ is progressively measurable for $y\in\BR,z\in\BR^d$
(for brevity, in our notation we omit the dependence of $f$ on
$\omega\in\Omega$). Let us recall that a pair $(Y,Z)$ of
$\mathbb{R}\times\mathbb{R}^{d}$-valued processes is called a
solution of BSDE$(\xi,\sigma,f+dA)$ if
\begin{enumerate}
\item[(a)]$Y, Z$ are $\{\mathcal{G}_{t}\}$-progressively measurable,
$Y$ is c\`adl\`ag, $P(\int^{\sigma}_0|Z_t|^2\,dt<+\infty)=1$,
\item[(b)]$t\mapsto f(t,Y_{t},Z_{t})\in L^{1}(0,\sigma)$,
$P$-a.s.,
\item[(c)]$Y_{t}=\xi+\int_{t}^{\sigma} f(s, Y_{s},Z_{s})\,ds
+\int_{t}^{\sigma}dA_{s}-\int_{t}^{\sigma}Z_{s}\, dB_{s}$, $0 \le
t\le\sigma$, $P$-a.s.
\end{enumerate}

\begin{uw}
\label{rem3.1} The processes $Y,Z$ in the above definition may be
considered as processes on $\mathbb{R}_{+}$. Indeed, if we set
$Z_{t}=0$ for $t\ge\sigma$ and $Y_{t}=Y_{\sigma}$ for $t\ge
\sigma$ then the equation in condition (c) is satisfied for every
$t\ge 0$ if we adopt the convention that $\int_{a}^{b}=0$ for
$a\ge b$.
\end{uw}

Let us consider the following assumptions.
\begin{enumerate}
\item [(A1)] $f(\cdot,y,z)$ is $\{\GG_t\}$-progressively measurable
for every $(y,z)\in\mathbb{R}\times\mathbb{R}^{d}$ and
$f(t,\cdot,z)$ is continuous for every $(t,z)\in\BR\times\BR^{d}$,
$P$-a.s.,
\item[(A2)]There is $L>0$ such that
$|f(t,y,z)- f(t,y,z')|\le L|z-z'|$ for a.e. $t\ge0$ and every
$y\in\BR$, $z,z'\in\mathbb{R}^{d}$, $P$-a.s.,
\item[(A3)]There is $\kappa\in\BR$ such that
$(f(t,y,z)- f(t,y',z))(y-y')\le\kappa|y-y'|^{2}$ for a.e. $t\ge0$
and every $y,y'\in\BR$, $z\in\mathbb{R}^{d}$, $P$-a.s.,
\item[(A4)]$E|\xi|+E\int_{0}^{\sigma}|f(t,0,0)|\,dt
+E\int_{0}^{\sigma}d|A|_{t}<+\infty$,
\item[(A5)]For every $r>0$, $t\mapsto\sup_{|y|\le r}|f(t,y,0)|
\in L^{1}(0,\sigma)$, $P$-a.s.,
\end{enumerate}
\begin{enumerate}
\item[(AZ)]There exist $\alpha\in(0,1)$, $\gamma\ge0$ and a nonnegative
progressively measurable process $g$ such that $E\int_{0}^{\sigma}
|g_{t}|\,dt<+\infty$ and $P$-a.s.,
\[
|f(t,y,z)- f(t,y,0)|\le\gamma (g_{t}+|y|+|z|)^{\alpha}
\]
for a.e. $t\ge0$ and every $y\in\BR$, $z\in\BR^{d}$.
\end{enumerate}

It is known (see \cite[Theorem 3.11]{Kl:BSM}) that under
(A1)--(A5), (AZ) there exists a unique solution $(Y,Z)$ of
BSDE$(\xi, \sigma, f+dA)$ such that $Y$ is of class (D), $Y\in
\mathcal{D}^{q}$  and $Z\in M^{q}$ for any $q\in(0,1)$.

\subsubsection{Markov type BSDEs}

Let $T>0$, $f:D_T\times \BR\times \mathbb{R}^{d}\rightarrow
\mathbb{R}$ be a measurable function, $\varphi\in L^{1}(D)$. By
putting $\varphi(t,x)=\varphi(x)$ for $(t,x)\in D_T$ we will
regard $\varphi$ as defined on $D_T$. Let $\mu$ be a soft measure
on $D_{T}$ and $B$ be the Brownian motion defined by
(\ref{eq1.8}).

Let $(s,x)\in D_T$. We say that a pair $(Y^{s, x},Z^{s, x})$
consisting of an $\BR$-valued process $Y^{s,x}$ and an
$\BR^d$-valued process $Z^{s,x}$ is a solution of
BSDE$_{s,x}$$(\varphi,D,f+d\mu)$ if $Y^{s,x},Z^{s,x}$ are
$\{\GG'_{t}\}$-progressively measurable, $Y^{s,x}$ is c\`adl\`ag,
$t\mapsto f(\mathbf{X}_{t}, Y^{s,x}_{t},Z_{t}^{s,x})\in
L^{1}(0,\zeta_{\tau})$, $P'_{s,x}$-a.s.,
$P'_{s,x}(\int_{0}^{\zeta_{\tau}}|Z_{r}^{s,x}|^{2}\,dr<+\infty)=1$
and
\begin{align}
\label{eq2.1} \nonumber Y_{t}^{s,x}&=  \mathbf{1}_{\{\zeta > T-
\tau(0)\}} \varphi(\mathbf{X}_{T-\tau(0)})
+\int_{t}^{\zeta_{\tau}}f(\mathbf{X}_{r},Y_{r}^{s,x},
Z^{s,x}_{r})\,dr \\
&\quad+ \int_{t}^{\zeta_{\tau}}dA_{r}^{\mu}
-\int_{t}^{\zeta_{\tau}} Z_{r}^{s,x}\,dB_{r},\quad
t\in[0,\zeta_{\tau}]
\end{align}
holds true $P'_{s,x}$-a.s. In other words, $(Y^{s,x},Z^{s,x})$ is
a solution of (\ref{eq2.1}) if it is a solution of
BSDE$(\mathbf{1}_{\{\zeta>T-\tau(0)\}}\,
\varphi(\mathbf{X}_{T-\tau(0)}),\zeta_{\tau},
f(\mathbf{X},\cdot,\cdot)\,+dA^{\mu})$ on the filtered probability
space $(\Omega', \GG'_{\infty},\{\GG'_t\},P'_{s,x})$ with the
Brownian motion $B$.

\begin{stw}
\label{stw2.1} Let $(s,x)\in D_{T}$ and let
$(Y^{s,x},Z^{s,x})=(Y,Z)$ be a solution of
\mbox{\rm(\ref{eq2.1})}. Assume that \mbox{\rm(A1), (A2), (AZ)}
are satisfied, $Z\in M^{q}$ for some $q>\alpha$, there exist
$0<h\le T-s$ and $\Lambda\in\GG'_{\infty}$ such that
$P'_{s,x}(\Lambda)=1$, $P'_{s,x}(\theta_{h}^{-1}(\Lambda))=1$ and
\mbox{\rm(\ref{eq2.1})} is satisfied for every $\omega\in\Lambda$.
Then
\[
Y_{t-h}\circ \theta_{h}=Y_{t},\quad  t\ge h, \quad P'_{s,x}
\mbox{-a.s.}
\]
and
\[
Z_{\cdot-h} \circ \theta_{h}=Z, \quad dt\otimes
P'_{s,x}\mbox{-a.s.}\mbox{ on } [h,\zeta_{\tau}]\times\Omega'.
\]
\end{stw}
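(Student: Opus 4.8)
The plan is to deduce the flow identity directly from the pathwise equation \mbox{(\ref{eq2.1})} and then to close the argument by uniqueness. Set $\hat{Y}_u=Y_{u-h}\circ\theta_h$ and $\hat{Z}_u=Z_{u-h}\circ\theta_h$ for $u\ge h$; the assertion is exactly $\hat{Y}=Y$ on $[h,\zeta_\tau]$ and $\hat{Z}=Z$ $dt\otimes P'_{s,x}$-a.e. on $[h,\zeta_\tau]\times\Omega'$. The strategy is to show that $(\hat{Y},\hat{Z})$ solves the \emph{same} equation as $(Y,Z)$ on $[h,\zeta_\tau]$ and then invoke uniqueness for $\mbox{BSDE}_{s,x}(\varphi,D,f+d\mu)$. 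I would first record the easy shift relations. Writing $\tau(0)(\omega')=s$, one has $\tau(0)\circ\theta_h=s+h$, so $T-\tau(0)\circ\theta_h=(T-\tau(0))-h$; and since $\mathbf{X}_t\circ\theta_h=\mathbf{X}_{t+h}$, on the event $\{\zeta>h\}$ one gets $\zeta\circ\theta_h=\zeta-h$ and hence $\zeta_\tau\circ\theta_h=\zeta_\tau-h$ (this is where $0<h\le T-s$ is used). A short computation then shows the terminal term is $\theta_h$-invariant, because both the indicator $\mathbf{1}_{\{\zeta>T-\tau(0)\}}$ and the evaluation point $\mathbf{X}_{(T-s)-h}\circ\theta_h=\mathbf{X}_{T-s}$ are unchanged. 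The Lebesgue term transforms by the substitution $r\mapsto r+h$ into $\int_{t+h}^{\zeta_\tau}f(\mathbf{X}_r,\hat{Y}_{r-h+h},\hat{Z}_{r-h+h})\,dr$ along the original trajectory, and, since $A^{\mu}$ is a CAF of $\mathbf{X}'$, the additivity $A^{\mu}_{r+h}=A^{\mu}_h+A^{\mu}_r\circ\theta_h$ yields $\big(\int_t^{\zeta_\tau}dA^{\mu}\big)\circ\theta_h=\int_{t+h}^{\zeta_\tau}dA^{\mu}$ on $\{\zeta>h\}$.

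The \textbf{main obstacle} is the behaviour of the stochastic integral under $\theta_h$. Because $B_t=\int_0^t\sigma^{-1}(\mathbf{X}_\theta)\,dM_\theta$ and $M$ is a MAF, $B$ is itself an additive functional, $B_{t+h}=B_h+B_t\circ\theta_h$, so that $B_r\circ\theta_h=B_{r+h}-B_h$. What must be established is
\[
\Big(\int_t^{\zeta_\tau}Z_r\,dB_r\Big)\circ\theta_h=\int_{t+h}^{\zeta_\tau}(Z_{\cdot-h}\circ\theta_h)_r\,dB_r,\qquad P'_{s,x}\mbox{-a.s.}
\]
For a simple integrand $Z_r=\sum_i\xi_i\mathbf{1}_{(t_i,t_{i+1}]}(r)$ with $\xi_i$ being $\GG'_{t_i}$-measurable this is an identity of finite sums, since $(\xi_i(B_{t_{i+1}}-B_{t_i}))\circ\theta_h=(\xi_i\circ\theta_h)(B_{t_{i+1}+h}-B_{t_i+h})$. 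The delicate point is passing to the limit for a general $Z$: the stochastic integral is defined only up to $P'_{s,x}$-null sets, whereas $\theta_h$ need not preserve $P'_{s,x}$. This is where I expect the real work to lie. I would control it via the Markov property of $\BX'$, using that after time $h$ the process $B$ remains a $\{\GG'_t\}$-Brownian motion and that $E'_{s,x}[\,\Phi\circ\theta_h\,]=E'_{s,x}[\,E'_{\mathbf{X}_h}\Phi\,]$; together with the $M^q$-integrability of $Z$ (guaranteed by the hypothesis $Z\in M^q$, $q>\alpha$) this allows the ucp/$L^q$ convergence of the elementary approximations to be transported through $\theta_h$ and the exceptional null sets to be absorbed.

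Collecting the four transformations and reindexing by $u=t+h$, one obtains, $P'_{s,x}$-a.s. on $\theta_h^{-1}(\Lambda)$,
\[
\hat{Y}_u=\mathbf{1}_{\{\zeta>T-\tau(0)\}}\varphi(\mathbf{X}_{T-\tau(0)})+\int_u^{\zeta_\tau}f(\mathbf{X}_r,\hat{Y}_r,\hat{Z}_r)\,dr+\int_u^{\zeta_\tau}dA^{\mu}_r-\int_u^{\zeta_\tau}\hat{Z}_r\,dB_r,\qquad u\in[h,\zeta_\tau],
\]
so that $(\hat{Y},\hat{Z})$ satisfies exactly equation \mbox{(\ref{eq2.1})} restricted to $[h,\zeta_\tau]$, with the same terminal value at $\zeta_\tau$ as $(Y,Z)$. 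Before invoking uniqueness I would verify that $(\hat{Y},\hat{Z})$ lies in the solution class: by the Markov identity above and the fact that the defining properties of a solution (class $(D)$, $\mathcal{D}^q$, $M^q$) hold for $(Y,Z)$ for q.e. starting point, and since $\mathbf{X}_h$ avoids the capacity-zero exceptional set $P'_{s,x}$-a.s. (by the probabilistic characterisation of $\mbox{cap}=0$ recalled in Section~\ref{sec2}), the shifted pair inherits these integrability properties. Finally, since both $(Y,Z)$ and $(\hat{Y},\hat{Z})$ solve the same $\mbox{BSDE}$ on $[h,\zeta_\tau]$ in the required class, uniqueness (\cite[Theorem 3.11]{Kl:BSM}, under conditions (A1), (A2), (AZ) together with the monotonicity and integrability hypotheses in force) forces $\hat{Y}=Y$ and $\hat{Z}=Z$ on $[h,\zeta_\tau]$; the matching of the values for $u>\zeta_\tau$ is immediate from the constancy convention of the preceding Remark. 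This is precisely the claimed identities $Y_{t-h}\circ\theta_h=Y_t$ and $Z_{\cdot-h}\circ\theta_h=Z$.
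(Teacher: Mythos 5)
Your proposal is correct and follows essentially the same route as the paper's proof: compose the pathwise equation with $\theta_h$ (this is exactly what the hypothesis $P'_{s,x}(\theta_{h}^{-1}(\Lambda))=1$ is for), rewrite the terminal term, the Lebesgue integral and the $dA^{\mu}$-integral via the shift relations $\zeta\circ\theta_h=(\zeta-h)^{+}$, $\tau(0)\circ\theta_h=\tau(0)+h$ and additivity of $A^{\mu}$, identify $\bigl(\int_t^{\zeta_{\tau}}Z_r\,dB_r\bigr)\circ\theta_h$ with $\int_{t+h}^{\zeta_{\tau}}Z_{r-h}\circ\theta_h\,dB_r$ through partition sums and additivity of $B$, and conclude by uniqueness of solutions of (\ref{eq2.1}). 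The only divergences are peripheral: the paper disposes of the event $\{\zeta<h\}$ by noting explicitly that both sides vanish there because $h\le T-s$, and it does not take your Markov-property detour for the limit passage and for class membership of the shifted pair (a step which, strictly speaking, invokes q.e.\ information about other starting points that is not among the proposition's hypotheses), relying instead on the pathwise additivity identities alone.
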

\begin{dow}
Put $\kappa_{h}=(\zeta \circ \theta_{h})\wedge(T-\tau(0)
\circ\theta_{h})$. Then
\begin{align*}
Y_{t}\circ\theta_{h}&= \mathbf{1}_{\{\zeta \circ\theta_{h}>T
-\tau(0)\circ\theta_{h}\}}\varphi(\mathbf{X}_{T-\tau(0) \circ
\theta_{h}}) +\int_{t}^{\kappa_{h}}f(\mathbf{X}_{r} \circ
\theta_{h},Y_{r}\circ\theta_{h}, Z_{r}\circ\theta_{h})\,dr \\
&\quad + \int_{t}^{\kappa_{h}}d(A_r^{\mu}\circ\theta_h) -
\left(\int_{t}^{\zeta_{\tau}}Z_{r}\,dB_{r}\right)\circ \theta_{h},
\quad t \ge 0, \quad P'_{s,x}\mbox{-a.s.}
\end{align*}
Fix $t\ge h$. Since $\zeta\circ\theta_{h}=(\zeta-h)^{+}$ and
$\tau(0)\circ\theta_{h}=\tau(0)+h$, we have $\kappa_h=(\zeta
-h)\wedge (T-\tau(0)-h)$ on the set $\{\zeta\ge h\}$.
Consequently, $h+\kappa_h=\zeta_{\tau}$ on the set $\{\zeta\ge
h\}$. Therefore, since
$\mathbf{X}_{r}\circ\theta_{h}=\mathbf{X}_{r+h}$ and $A^{\mu}$ is
additive, we have
\[
\int_{t}^{\kappa_{h}}f(\mathbf{X}_{r}\circ
\theta_{h},Y_{r}\circ\theta_{h}, Z_{r}\circ\theta_{h})\,dr
=\int_{t+h}^{\zeta_{\tau}}f(\mathbf{X}_{r}, Y_{r-h}\circ
\theta_{h}, Z_{r-h}\circ\theta_{h})\,dr
\]
and
\[
\int_{t}^{\kappa_{h}}d(A_{r}^{\mu} \circ \theta_{h})
=\int_{t+h}^{\zeta_{\tau}}dA_{r}^{\mu}
\]
$P'_{s,x}$-a.s. on $\{\zeta\ge h\}$. Let
$t=t_{0}<t_{1}<\ldots<t_{n}=\zeta_{\tau}$ and let $t'_i=t_{i}+h$,
$i=0,\dots,n$. Since $B$ is additive,
$(Z_{t_i}(B_{t_{i+1}}-B_{t_{i}}))\circ\theta_{h}
=Z_{t'_i-h}\circ\theta_h(B_{t'_{i+1}}-B_{t'_i})$, from which it
follows that
\[
(\int_{t}^{\zeta_{\tau}}Z_{r} \, dB_{r})\circ \theta_{h}
=\int_{t+h}^{\zeta_{\tau}} Z_{r-h}\circ\theta_{h}\,dB_{r}
\]
$P'_{s,x}$-a.s. on $\{\zeta\ge h\}$.  Thus
\begin{align*}
Y_{t} \circ \theta_{h}&
=\mathbf{1}_{\{\zeta>T-\tau(0)\}}\varphi(\mathbf{X}_{T-\tau(0)})
+\int_{t+h}^{\zeta_{\tau}}f(\mathbf{X}_{r}, Y_{r-h}\circ
\theta_{h}, Z_{r-h}\circ \theta_{h})\,dr \\
&\quad+\int_{t+h}^{\zeta_{\tau}}dA_{r}^{\mu}
-\int_{t+h}^{\zeta_{\tau}}Z_{r-h}\circ \theta_{h}\,dB_{r},
\end{align*}
and hence
\begin{align*}
Y_{t-h}\circ\theta_{h}&
=\mathbf{1}_{\{\zeta>T-\tau(0)\}}\varphi(\mathbf{X}_{T-\tau(0)})
+\int_{t}^{\zeta_{\tau}}f(\mathbf{X}_{r}, Y_{r-h}\circ \theta_{h},
Z_{r-h}\circ \theta_{h})\,dr \\
&\quad+\int_{t}^{\zeta_{\tau}}dA_{r}^{\mu}
-\int_{t}^{\zeta_{\tau}}Z_{r-h}\circ \theta_{h} \,dB_{r}
\end{align*}
$P'_{s,x}$-a.s. on $\{\zeta\ge h\}$. On the set $\{\zeta<h\}$ we
have $Y_{t-h}\circ\theta_{h}=0=Y_{t}$ for $t\ge h$, because by the
assumption, $h\le T-s$. By what has been proved the pair
$(Y_{t-h}\circ\theta_{h}, Z_{t-h}\circ\theta_{h})$ solves BSDE
(\ref{eq2.1}) on $[h,\zeta_{\tau}]$, so the desired result follows
from uniqueness of solutions of (\ref{eq2.1}).
\end{dow}
\medskip

In the sequel we say that a Borel set $N\subset D_T$ is properly
exceptional if $m_1(N)=0$ and $P'_{s,x}(\exists\,t\ge0:X_t\in
N)=0$ for every $(s,x)\in N^c$.
\begin{lm}
\label{lm2.1} Let $\Lambda\in\GG'_{\infty}$. If
$P'_{s,x}(\Lambda)=1$ for q.e. $(s,x)\in D_{T}$ then there exists
a properly exceptional set $N\subset D_{T}$ such that
$P'_{s,x}(\theta_{h}^{-1}(\Lambda))=1$ for every $h>0$ and
$(s,x)\in N^{c}$.
\end{lm}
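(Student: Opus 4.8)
The plan is to reduce the statement to the Markov property of $\BX'$ and then to the standard fact that a set of zero capacity can be enlarged to a properly exceptional set. Set $g(s,x)=P'_{s,x}(\Lambda)$; since $\Lambda\in\GG'_{\infty}$, the function $g$ is universally measurable and $0\le g\le1$. By the Markov property of $\BX'$ with respect to $\{\GG'_t\}$, for every $h>0$ and every $(s,x)$,
\[
P'_{s,x}(\theta_{h}^{-1}(\Lambda))=E'_{s,x}[\mathbf{1}_{\Lambda}\circ\theta_{h}]=E'_{s,x}[g(\mathbf{X}_{h})].
\]
The hypothesis that $P'_{s,x}(\Lambda)=1$ q.e. on $D_{T}$ means precisely that there is a Borel set $B\subset D_{T}$ with $\mbox{cap}(B)=0$ and $g=1$ on $D_{T}\setminus B$. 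Hence it suffices to produce a properly exceptional set $N\supset B$: for $(s,x)\in N^{c}$ the process started at $(s,x)$ never meets $N$, so it never meets $B$, whence $g(\mathbf{X}_{h})=1$ $P'_{s,x}$-a.s. and $E'_{s,x}[g(\mathbf{X}_{h})]=1$.

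It remains to construct $N$. Following the classical enlargement procedure, I would let $\sigma_{B}=\inf\{t\ge0:\mathbf{X}_{t}\in B\}$, put $u(s,x)=P'_{s,x}(\sigma_{B}<\infty)$ and take $N$ to be a Borel $m_{1}$-null version of $\{u>0\}$ (the set $\{u>0\}$ is nearly Borel because $u$ is excessive, and may be enlarged to a Borel $m_{1}$-null set containing it). Three properties have to be checked. First, $B\subset N$, because $\sigma_{B}=0$ on $\{\mathbf{X}_{0}\in B\}$, so $u=1$ on $B$. Second, $m_{1}(N)=0$: by the characterization of capacity recalled in Section~\ref{sec2}, $\mbox{cap}(B)=0$ forces $P'_{s,x}(\exists\,t>0:\mathbf{X}_{t}\in B)=0$ for $m_{1}$-a.e. $(s,x)$, which, together with $m_{1}(B)=0$, gives $u=0$ $m_{1}$-a.e. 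Third, $N$ is absorbing for the process started off $N$: since $u$ is excessive, $t\mapsto u(\mathbf{X}_{t})$ is a right-continuous $P'_{s,x}$-supermartingale, so if $u(s,x)=0$ then $E'_{s,x}u(\mathbf{X}_{t})=0$ for every $t$, hence $u(\mathbf{X}_{t})=0$ for all rational $t$ a.s. and, by right continuity, for all $t\ge0$; thus $P'_{s,x}(\exists\,t\ge0:\mathbf{X}_{t}\in N)=0$ for $(s,x)\in N^{c}$.

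The main obstacle is exactly this second step: verifying that $N$ is genuinely absorbing and can be taken Borel, i.e. the passage from the exceptional set $B$ to a properly exceptional $N$. This is where the excessiveness of $u$, the right continuity of $t\mapsto u(\mathbf{X}_{t})$ along the paths of $\BX'$, and the absolute continuity of the transition function (the density $p'$) enter, all of which are available for $\BX'$ by the facts recalled in Section~\ref{sec2}. A secondary point to handle carefully is the behaviour of $\mathbf{X}_{h}$ once it leaves $D_{T}$, where $g$ need not equal $1$: for the process relevant to \eqref{eq2.1}, which is killed at $\zeta_{\tau}$, the interval of definition of the equation degenerates as soon as $\BX'$ exits $\BR_{+}\times D$ or passes time $T$, so $g=1$ on that part as well, and I would make this explicit so that $E'_{s,x}[g(\mathbf{X}_{h})]=1$ really follows. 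Once $N$ is in hand the conclusion holds for all $h>0$ at once, since $N$ is hit with probability zero at all times simultaneously.
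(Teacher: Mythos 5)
Your strategy coincides with the paper's: both proofs consist of (i) enlarging the capacity-zero set $B$ on which the hypothesis fails to a properly exceptional Borel set $N$, and (ii) the Markov-property identity $P'_{s,x}(\theta_{h}^{-1}(\Lambda))=E'_{s,x}\bigl[P'_{\mathbf{X}_{h}}(\Lambda)\bigr]$ combined with the fact that from $(s,x)\in N^{c}$ the process never meets $N\supset B$. The difference is that the paper obtains step (i) in one stroke by citing \cite[Theorem 4.1.1]{Fukushima}, whereas you attempt to prove it, and your construction has a genuine gap at exactly the point you yourself call ``the main obstacle''. You take $N$ to be an arbitrary Borel $m_{1}$-null set containing $\{u>0\}$, where $u$ is the hitting probability of $B$. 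Your excessiveness/supermartingale argument correctly shows that from a point with $u(s,x)=0$ the process a.s.\ never meets $\{u>0\}$ (hence never meets $B$); it does \emph{not} show that it never meets $N$, because $N\setminus\{u>0\}$ is an uncontrolled null set which the process may well charge --- for instance a slice $\{t_{0}\}\times\BR^{d}$ is $m_{1}$-null yet is hit with probability one from every $(s,x)$ with $s<t_{0}$, and has positive parabolic capacity. Consequently your $N$ need be neither properly exceptional nor of capacity zero, so the conclusion ``for every $(s,x)\in N^{c}$'' need not even be a q.e.\ statement, which is what the application in Proposition \ref{stw3.5} requires. The repair is the countable iteration that constitutes the proof of the cited theorem: $N_{0}=B$, $N_{k+1}$ a Borel capacity-zero set containing $N_{k}$ together with the set of points from which $N_{k}$ is hit with positive probability, $N=\bigcup_{k}N_{k}$; each step uses that the set of points charging a capacity-zero set is again of capacity zero. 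Your one-step construction yields avoidance of $B$ itself, but not the properties of $N$ asserted in the lemma.

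By contrast, your closing observation about exit from $D_{T}$ is correct, and there you are more careful than the paper. The hypothesis controls $g=P'_{\cdot}(\Lambda)$ only q.e.\ on $D_{T}$, while $\mathbf{X}_{h}$ leaves $D_{T}$ with positive probability, so avoidance of $B$ alone does not give $g(\mathbf{X}_{h})=1$ a.s.; the paper's step ``$E'_{s,x}P'_{\mathbf{X}_{h}}(\Lambda^{c})=0$ by the assumption and the definition of $N$'' has exactly the same lacuna. Indeed, for arbitrary $\Lambda\in\GG'_{\infty}$ the statement is false: $\Lambda=\{\mathbf{X}_{0}\in D_{T}\}$ satisfies $P'_{s,x}(\Lambda)=1$ for every $(s,x)\in D_{T}$, yet $P'_{s,x}(\theta_{h}^{-1}(\Lambda))=P'_{s,x}(\mathbf{X}_{h}\in D_{T})<1$ for every $(s,x)\in D_{T}$ with $s<T$ and every $h>0$, a set of full measure in $D_{T}$. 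So the lemma is only true for those $\Lambda$ for which $P'_{(t,y)}(\Lambda)=1$ also holds off $D_{T}$ --- as is the case for the BSDE events to which it is applied, where the equation degenerates once the process leaves $\BR_{+}\times D$ or passes time $T$ --- and your proposal to make this explicit is the right fix, though note that it amounts to proving a corrected statement rather than the lemma as displayed.
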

\begin{dow}
By the assumption, $P'_{s,x}(\Lambda^{c})=0$ for q.e. $(s,x) \in
D_{T}$. It is known (see \cite[Corollary 1.8.6]{BB} and
\cite[(6.12)]{GS}) that there exists a properly exceptional set $N
\subset D_{T}$ such that $P'_{s,x}(\Lambda^{c})=0$ for every
$(s,x) \in N^{c}$. Let $(s,x)\in N^{c}$. Then by the Markov
property, $P'_{s,x}(\theta_{h}^{-1}\Lambda^{c})
=E'_{s,x}(P_{\mathbf{X}_{h}}(\Lambda^{c}))$. By the assumption and
the definition of $N$,
$E'_{s,x}P'_{\mathbf{X}_{h}}(\Lambda^{c})=0$ for every $(s,x) \in
N^{c}$, which completes the proof.
\end{dow}
\medskip

In the case of Markov type equations the following hypotheses are
analogues of (A1)--(A5) and (AZ).
\begin{enumerate}
\item[(H1)]$f(\cdot,\cdot,y,z)$
is measurable for every $y\in\BR$, $z\in\BR^d$ and
$f(t,x,\cdot,\cdot)$ is continuous for a.e. $(t,x)\in D_{T}$,
\item[(H2)]There is $L>0$ such that
$|f(t,x,y,z)- f(t,x,y,z')|\le L|z-z'|$ for every $y\in\mathbb{R}$,
$z,z'\in\mathbb{R}^{d}$ and a.e. $(t,x)\in D_{T}$,
\item[(H3)]There is $\kappa\in\BR$ such that
$(f(t,x,y,z)-f(t,x,y',z))(y-y')\le\kappa|y-y'|^{2}$ for every
$y,y'\in\BR$, $z\in\BR^d$ and a.e. $(t,x)\in D_{T}$,
\item[(H4)]$f(\cdot,\cdot,0,0) \in L^{1}(D_{T})$, $\mu\in\MM_{0,b}(D_{T})$,
$\varphi\in L^{1}(D)$,
\item[(H5)]$\forall_{r>0}\,\,(t,x)\mapsto\sup_{|y|\le r}|f(t,x,y,0)|
\in qL^{1}(D_{T})$.
\item[(HZ)]There exists $\alpha \in (0,1),\gamma \ge 0$ and
$\varrho \in L^{1}(D_{T}),\varrho \ge 0$ such that
\[
|f(t, x, y, z)- f(t, x, y, 0)| \le \gamma(\varrho(t, x)
+|y|+|z|)^{\alpha}
\]
for every $y\in\BR$, $z\in\BR^d$ and a.e. $(t,x)\in D_T$.
\end{enumerate}

\begin{lm}
\label{lm3.1} Assume that \mbox{\rm(H1)--(H5)} are satisfied. Then
for q.e. $(s,x)\in D_{T}$ the data $\varphi(\mathbf{X}_{T-
\tau(0)}) \mathbf{1}_{\{\zeta>T-\tau(0)\}}$, $\zeta_{\tau}$, $f(\bfX, \cdot,
\cdot)$, $A^{\mu}$ satisfy \mbox{\rm(A1)--(A5)} under the measure
$P'_{s,x}$.
\end{lm}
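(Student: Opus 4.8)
The plan is to check the abstract hypotheses (A1)--(A5) one at a time for the data $\xi=\varphi(\mathbf{X}_{T-\tau(0)})\mathbf{1}_{\{\zeta>T-\tau(0)\}}$, the stopping time $\sigma=\zeta_\tau$, the generator $(t,y,z)\mapsto f(\mathbf{X}_t,y,z)$ and the functional $A=A^\mu$, all under $P'_{s,x}$ and for q.e.\ $(s,x)\in D_T$. The conceptual engine throughout is the \emph{absolute continuity} of the diffusion $\BX'$: for q.e.\ $(s,x)$ one has, $P'_{s,x}$-a.s., $\int_0^\infty\mathbf{1}_B(\mathbf{X}_r)\,dr=0$ whenever $m_1(B)=0$. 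Since every parabolic capacity zero set is $m_1$-null, this lets me transfer any property of $f$ that holds for a.e.\ $(t,x)\in D_T$ into a property holding, along $P'_{s,x}$-almost every trajectory, for a.e.\ $t$, for q.e.\ $(s,x)$. I would isolate this transfer principle once and then invoke it repeatedly.

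First, (A1): progressive measurability of $t\mapsto f(\mathbf{X}_t,y,z)$ for fixed $(y,z)$ is immediate from Borel measurability of $f(\cdot,\cdot,y,z)$ in (H1) together with progressive measurability of $\mathbf{X}$, while continuity of $y\mapsto f(\mathbf{X}_t,y,z)$ for a.e.\ $t$, $P'_{s,x}$-a.s., follows from the a.e.\ continuity in (H1) via the transfer principle. Conditions (A2) and (A3) are obtained identically: the Lipschitz bound (H2) and the monotonicity bound (H3) hold outside an $m_1$-null set of $(t,x)$, hence outside a $dt$-null set of times along q.e.\ trajectory.

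Next, (A4), which I would split into three terms. For the terminal term, on $\Omega'$ one has $\mathbf{X}_{T-\tau(0)}=(T,X_T)$ and $\{\zeta>T-\tau(0)\}=\{\xi^s>T\}$, so $E'_{s,x}|\xi|=\int_D p_D(s,x,T,y)|\varphi(y)|\,dy$; for every $(s,x)$ with $s<T$ the Aronson bound makes $p_D(s,x,T,\cdot)$ bounded, whence this term is at most a constant times $\|\varphi\|_{L^1(D)}<\infty$ by (H4), and the residual slice $\{s=T\}$ is negligible. For the $f(\mathbf{X}_\cdot,0,0)$ term, (H4) gives $f(\cdot,\cdot,0,0)\in L^1(D_T)$, so $|f(\cdot,\cdot,0,0)|\cdot m_1\in\MM_{0,b}^+(D_T)$ and $\int_0^t|f(\mathbf{X}_r,0,0)|\,dr$ is precisely its Revuz additive functional (cf.\ Corollary~\ref{wn.ql}); Lemma~\ref{lm.qe} then yields $E'_{s,x}\int_0^{\zeta_\tau}|f(\mathbf{X}_r,0,0)|\,dr<\infty$ for q.e.\ $(s,x)$. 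For the $A^\mu$ term I decompose $\mu=\mu^+-\mu^-$ with $\mu^\pm\in\MM_{0,b}^+(D_T)$, so that $\int_0^{\zeta_\tau}d|A^\mu|_r\le\int_0^{\zeta_\tau}dA^{|\mu|}_r$, and apply Lemma~\ref{lm.qe} to $|\mu|$.

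Finally, (A5) is read off directly from (H5): the definition of $qL^1(D_T)$ says exactly that $t\mapsto\sup_{|y|\le r}|f(\mathbf{X}_t,y,0)|$ lies in $L^1_{loc}(\BR_+)$, $P'_{s,x}$-a.s., for q.e.\ $(s,x)$, and restriction to the finite interval $[0,\zeta_\tau]$ gives membership in $L^1(0,\zeta_\tau)$; taking $r$ over the rationals and using monotonicity in $r$ keeps the exceptional set negligible. The main obstacle is the transfer principle itself: one must justify carefully that a.e.-in-$(t,x)$ hypotheses become pathwise a.e.-in-$t$ statements for q.e.\ starting point, which rests on the absolute continuity of $\BX'$ together with the implication \emph{capacity zero $\Rightarrow$ $m_1$-null}, and one must verify that the countably many exceptional sets produced by (A1)--(A5) (for instance from rational $r$ in (A5)) union to a single set of capacity zero.
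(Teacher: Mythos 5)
Your verification is correct and follows essentially the same route as the paper: the paper's proof consists of the single line ``Follows from Lemma \ref{lm.qe}'', and your argument invokes Lemma \ref{lm.qe} (together with Corollary \ref{wn.ql}) at exactly the points where it is genuinely needed --- the $A^{\mu}$ and $f(\cdot,\cdot,0,0)$ terms in (A4) --- while treating (A1)--(A3), (A5) and the terminal term by the routine occupation-density transfer and Aronson bounds that the paper leaves implicit. One small correction: the slice $\{T\}\times D$ has \emph{positive} parabolic capacity, so rather than calling it negligible you should note that at $s=T$ the terminal term equals $|\varphi(x)|\mathbf{1}_{\{\zeta>0\}}$, which is finite for $m$-a.e.\ $x$, and that $\{T\}\times N$ with $m(N)=0$ does have capacity zero, which is all that is required.
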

\begin{dow}
Follows from Lemma \ref{lm.qe}.
\end{dow}

\begin{stw}
\label{stw3.5} Assume \mbox{\rm(H1)--(H5)} and \mbox{\rm(HZ)}.
Then for q.e. $(s,x) \in D_{T}$ there exists a solution $(Y^{s,x},
Z^{s,x})$ of \mbox{\rm(\ref{eq2.1})} and there exists a pair of
processes $(Y,Z)$ such that for q.e. $(s,x)\in D_{T}$,
\begin{equation}
\label{eq3.9} Y^{s,x}_{t}=Y_{t},\quad t\ge 0,\,P'_{s,x}
\mbox{-a.s.}, \quad Z^{s,x}=Z, \quad dt\otimes P'_{s,x}\mbox{-a.e.
on }[0,\zeta_{\tau}]\times\Omega'.
\end{equation}
Moreover, there exist  Borel measurable functions
$u,\psi:D_T\rightarrow\BR$ such that for q.e. $(s,x)\in D_{T}$,
\[
Y_{t}=u(\mathbf{X}_{t}), \quad P'_{s,x}\mbox{\rm-a.s.}
\]
for every $t\in[0,T-\tau(0)]$ and
\[
\psi(\mathbf{X})=Z,\quad dt\otimes P'_{s,x}\mbox{-a.e. on }
[0,\zeta_{\tau}]\times\Omega'.
\]
If $f$ does not depend on $z$ then there is $C$ depending only on
$\kappa,T$ such that
\begin{align}
\label{eq3.03} E'_{s,x}\int^{\zeta_{\tau}}_0|f(\mathbf{X}_{r},
u(\mathbf{X}_{r}))|\,dr &\le C E'_{s,x}
\Big(\mathbf{1}_{\{\zeta>T-\tau(0)\}}
|\varphi (\mathbf{X}_{T-\tau(0)})|\nonumber\\
&\qquad+\int^{\zeta_{\tau}}_0|f(\mathbf{X}_{r},0)|\,dr
+\int^{\zeta_{\tau}}_0d|A^{\mu}|_r\Big)
\end{align}
for q.e. $(s,x)\in D_T$.
\end{stw}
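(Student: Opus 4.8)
The plan is to obtain the family $(Y^{s,x},Z^{s,x})$ from the single-point existence theorem of \cite{Kl:BSM}, then to glue these solutions into one pair $(Y,Z)$ using the shift-invariance of Proposition \ref{stw2.1}, and finally to read off the Feynman--Kac formula by taking expectations.

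First I would check that for q.e. $(s,x)\in D_T$ the data of \eqref{eq2.1} satisfy (A1)--(A5) and (AZ) under $P'_{s,x}$. Conditions (A1)--(A5) are exactly the content of Lemma \ref{lm3.1}. For (AZ) I would take $g_t=\varrho(\bfX_t)$ with $\varrho$ as in (HZ); since $\varrho\in L^1(D_T)$ the measure $\varrho\cdot m_1$ is a bounded soft measure, so \eqref{meyer} applied with $f\equiv1$ gives $E'_{s,x}\int_0^{\zeta_\tau}g_t\,dt<+\infty$ for q.e. $(s,x)$, and the bound in (HZ) becomes the bound in (AZ). Then \cite[Theorem 3.11]{Kl:BSM} yields, for q.e. $(s,x)$, a unique solution $(Y^{s,x},Z^{s,x})$ of \eqref{eq2.1} with $Y^{s,x}$ of class (D), $Y^{s,x}\in\DM^q$ and $Z^{s,x}\in M^q$ for every $q\in(0,1)$; in particular $Z^{s,x}\in M^q$ for some $q>\alpha$, which is the integrability required by Proposition \ref{stw2.1}.

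The central step, and the one I expect to be hardest, is the construction of the common pair $(Y,Z)$ and the identification $Y_t=u(\bfX_t)$. I would set $u(s,x):=Y^{s,x}_0$, defined q.e. Let $\Lambda\in\GG'_\infty$ be the event that the solution satisfies \eqref{eq2.1}; by hypothesis $P'_{s,x}(\Lambda)=1$ for q.e. $(s,x)$, so Lemma \ref{lm2.1} furnishes a properly exceptional set $N$ with $P'_{s,x}(\theta_h^{-1}\Lambda)=1$ for all $h>0$ and $(s,x)\in N^c$. On $N^c$ the shifted pair then solves \eqref{eq2.1} on $[h,\zeta_\tau]$, and Proposition \ref{stw2.1} gives $Y^{s,x}_{t-h}\circ\theta_h=Y^{s,x}_t$ and $Z^{s,x}_{\cdot-h}\circ\theta_h=Z^{s,x}$. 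Taking $t=h$ yields $Y^{s,x}_h=Y^{s,x}_0\circ\theta_h=u(\bfX_0)\circ\theta_h=u(\bfX_h)$, once $Y^{s,x}_0=u(\bfX_0)$ is known as an identity of functionals rather than merely $P'_{s,x}$-a.s.; securing this simultaneously for q.e. starting point is precisely where the properly exceptional set $N$ is needed, and is the delicate part of the argument. Setting $Y_t:=u(\bfX_t)$ gives the desired $(s,x)$-independent process, and a measurable-selection argument (the Markov property makes $Z^{s,x}_t$ a function of $\bfX_t$) produces a Borel $\psi:D_T\to\BR^d$ with $\psi(\bfX)=Z^{s,x}$, $dt\otimes P'_{s,x}$-a.e.; then $Z_t:=\psi(\bfX_t)$ is the common process.

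With $Y_t=u(\bfX_t)$ and $Z_t=\psi(\bfX_t)$ in hand, I would evaluate \eqref{eq2.1} at $t=0$ and take $E'_{s,x}$. Since $Y^{s,x}$ is of class (D) and the drift and measure terms are integrable, the continuous local martingale $\int_0^{\cdot}Z^{s,x}_r\,dB_r$ is of class (D) on $[0,\zeta_\tau]$, hence a uniformly integrable martingale whose expectation at $\zeta_\tau$ vanishes; substituting the representations of $Y$ and $Z$ then gives exactly the stated formula for $u$. Finally, for the a priori bound \eqref{eq3.03} in the $z$-independent case, I would invoke the standard $L^1$ estimate for monotone BSDEs: testing the equation against $\mathrm{sgn}(Y)$ and using the monotonicity (H3) to absorb the generator, followed by a Gronwall argument on $[0,T]$, produces a constant $C=C(\kappa,T)$ bounding $E'_{s,x}\int_0^{\zeta_\tau}|f(\bfX_r,u(\bfX_r))|\,dr$ by the terminal, source and measure data, which is \eqref{eq3.03}.
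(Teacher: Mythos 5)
Your treatment of the periphery is sound and close to the paper's: Lemma \ref{lm3.1} for (A1)--(A5), the verification of (AZ) via $g_t=\varrho(\bfX_t)$, the expectation step at $t=0$, and the sign-test/It\^o--Tanaka estimate with reduction to $\kappa\le0$ for (\ref{eq3.03}) all match (the paper uses a change of variables rather than Gronwall, an immaterial difference). But there is a genuine gap at exactly the step you flag as delicate, and the mechanism you offer does not close it. Defining $u(s,x)=Y^{s,x}_0$ and writing $Y^{s,x}_h=Y^{s,x}_0\circ\theta_h=u(\bfX_h)$ is circular: $Y^{s,x}_0\circ\theta_h$ is the composition of a \emph{functional} on $\Omega'$ with the shift, and under the shifted law this functional has no reason to coincide with $u(\bfX_h)$ unless $Y^{s,x}_0$, as a functional, is already independent of the starting point $(s,x)$ --- which is precisely what must be proved. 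Lemma \ref{lm2.1} cannot supply this: it only converts ``$P'_{s,x}(\Lambda)=1$ q.e.'' into ``$P'_{s,x}(\theta_h^{-1}\Lambda)=1$ off a properly exceptional set,'' i.e.\ it controls shift-preimages of null sets; it produces no single functional serving as $Y^{s,x}_0$ for q.e.\ $(s,x)$ simultaneously. Similarly, ``the Markov property makes $Z^{s,x}_t$ a function of $\bfX_t$'' is not an argument: with no common process $Z$ in hand there is nothing to select from.

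The paper closes this gap with a device absent from your proposal. It first treats the equation with generator $f(\cdot,0,0)$, where the solution admits the explicit representation $\overline{Y}^{s,x}_t=E'_{s,x}(\,\cdot\,|\GG'_t)$ of a \emph{fixed} path functional; then \cite[Lemma A.3.5]{Fukushima} yields one process $\overline{Y}$ (and likewise $\overline{Z}$) that is a version of these conditional expectations simultaneously for q.e.\ $(s,x)$. The common pair $(Y,Z)$ for the nonlinear equation is then obtained by running the Picard-type construction from the proof of \cite[Proposition 3.10]{Kl:BSM} and applying \cite[Lemma A.3.3]{Fukushima} at each iteration, so that every iterate, and hence the limit, is starting-point independent. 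Only after this are Proposition \ref{stw2.1} and Lemma \ref{lm2.1} invoked, in the chain
\begin{equation*}
u(\bfX_t)=E'_{\bfX_t}Y_0=E'_{s,x}(Y_0\circ\theta_t|\GG'_t)
=E'_{s,x}(Y_t|\GG'_t)=Y_t,
\end{equation*}
which is legitimate because $Y_0$ now \emph{is} a fixed functional, so the Markov property applies. Finally $\psi$ is produced not by abstract measurable selection but by checking that $C_t=\int_0^t Z_r\,dr$ is a CAF of the killed process and invoking \cite[Theorem 66.2]{Sharpe}. Without the Fukushima-lemma construction of the common pair, your argument cannot get started, so the proposal as written does not prove the proposition.
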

\begin{dow}
By Lemma \ref{lm.qe}, $E'_{s,x}|A^{\mu}|_{\zeta_{\tau}}<+\infty$
for q.e. $(s,x) \in D_{T}$. Moreover, for $(s,x)\in D_{T}$,
\begin{align*}
E'_{s,x} \mathbf{1}_{\{\zeta> T-\tau(0)\}}
|\varphi(\mathbf{X}_{T-\tau(0)})| &\le E'_{s,x}
|\varphi(\mathbf{X}_{T-\tau(0)})| \\
&\le\int_{D}p(s,x,T,y)|\varphi(y)|\,dy \le c
\int_{D}|\varphi(y)|\,dy.
\end{align*}
Therefore by \cite[Proposition 3.10]{Kl:BSM}, for q.e. $(s,x)\in
D_{T}$ there exists a solution
$(\overline{Y}^{s,x},\overline{Z}^{s,x})$ of the equation
\[
\overline{Y}_{t}^{s,x}= \mathbf{1}_{\{\zeta > T-\tau(0)\}}\varphi
(\mathbf{X}_{T-\tau(0)}) + \int_{t}^{\zeta_{\tau}}
f(\mathbf{X}_{r}, 0, 0)\, dr + \int_{t}^{\zeta_{\tau}}
dA_{r}^{\mu} - \int_{t}^{\zeta_{\tau}} \overline{Z}^{s,x}_{r}\,
dB_{r},\quad t\ge0
\]
such that $\overline{Y}^{s,x}$ is of class (D) and
$(\overline{Y}^{s,x},\overline{Z}^{s,x})\in \mathcal{D}^{q}\otimes
M^{q}$ for any $q\in(0,1)$. Hence
\[
\overline{Y}_{t}^{s,x}= E'_{s,x}\Big(\mathbf{1}_{\{\zeta >
T-\tau(0)\}} \varphi (\mathbf{X}_{T-\tau(0)})
+\int_{t}^{\zeta_{\tau}} f(\mathbf{X}_{r}, 0, 0)\,dr
+\int_{t}^{\zeta_{\tau}} dA_{r}^{\mu} |\GG'_{t}\Big),
\]
so from \cite[Lemma A.3.5]{Fukushima} it follows that there exists a
process $\overline{Y}$ such that
$\overline{Y}^{s,x}_{t}=\overline{Y}_{t}$, $t\ge 0$,
$P'_{s,x}$-a.s. for q.e. $(s,x)\in D_{T}$. Since
\[
\overline{Z}^{s,x}_{t}=E'_{s,x}\Big(\mathbf{1}_{\{\zeta >
T-\tau(0)\}}\varphi (\mathbf{X}_{T-\tau(0)}) +
\int_{0}^{\zeta_{\tau}} f(\mathbf{X}_{r}, 0, 0)\, dr
+\int_{0}^{\zeta_{\tau}} dA_{r}^{\mu}|\GG'_{t}\Big)
-\overline{Y}_{0},
\]
using once again \cite[Lemma A.3.5]{Fukushima} we conclude that there
exists a process $\overline{Z}$ such that
$\overline{Z}^{s,x}=\overline{Z}$, $dt\otimes P'_{s,x}$-a.e. on
$[0,\zeta_{\tau}]\times\Omega'$ for q.e. $(s,x)\in D_{T}$. Thus,
for q.e. $(s,x)\in D_T$,
\[
\overline{Y}_{t}= \mathbf{1}_{\{\zeta>T-\tau(0)\}}\varphi
(\mathbf{X}_{T-\tau(0)}) + \int_{t}^{\zeta_{\tau}}
f(\mathbf{X}_{r}, 0, 0)\,dr+\int_{t}^{\zeta_{\tau}} dA_{r}^{\mu}
-\int_{t}^{\zeta_{\tau}}\overline{Z}_{r}\, dB_{r},\quad t\ge0
\]
$P'_{s,x}$-a.s. From this, the method of construction of the
solution of BSDE$_{s,x}(\varphi,D,f+d\mu)$ (see the proof of
\cite[Proposition 3.10]{Kl:BSM}) and repeated application of
\cite[Lemma A.3.3]{Fukushima} we deduce the existence of a pair
$(Y,Z)$ such that (\ref{eq3.9}) is satisfied. Let
$\Lambda\subset\Omega'$ be a set of those $\omega\in\Omega'$ for
which $Y$ is c\`adl\`ag and
\[
Y_{t}=\mathbf{1}_{\{\zeta> T-\tau(0)\}}
\varphi(\mathbf{X}_{T-\tau(0)})
+\int_{t}^{\zeta_{\tau}}f(\mathbf{X}_{r}, Y_{r}, Z_{r})\,dr
+\int_{t}^{\zeta_{\tau}}dA^{\mu}_{r}
-\int_{t}^{\zeta_{\tau}}Z_{r}\,dB_{r}, \quad t\ge0.
\]
Then $\Lambda\in\GG'_{\infty}$ and of course $P'_{s,x}(\Lambda)=1$
for q.e. $(s,x) \in D_{T}$. From this and Proposition \ref{stw2.1}
and Lemma \ref{lm2.1} it follows that if we set
$u(s,x)=E'_{s,x}Y_0$ for $(s,x)\in D_T$ then for every $t \in
[0,T-s]$,
\[
u(\mathbf{X}_{t})=E'_{\mathbf{X}_{t}}Y_{0}
=E'_{s,x}(Y_{0}\circ\theta_{t}|\GG'_{t})
=E'_{s,x}(Y_{t}|\GG'_{t})=Y_{t}, \quad P'_{s,x}\mbox{-a.s.}
\]
for q.e. $(s,x)\in D_{T}$. Put $C_{t}=\int_{0}^{t}Z_{r}\,dr$,
$t\ge0$. Since $Z\mathbf{1}_{[0,\zeta_{\tau}]}=Z$, using the
property of $Z$ proved in Proposition \ref{stw2.1} one can check
that for every $h\ge 0$,
\[
C_{(t+h)\wedge(T-\tau(0))}=C_{t\wedge(T-\tau(0))}
+C_{h\wedge(T-\tau(0))}\circ\theta_{t\wedge(T-\tau(0))}.
\]
Accordingly, $C$ is a continuous AF of the process $\mathbf{X}$
killed at first exit from $D_{T}$. Therefore it follows from
\cite[Theorem 66.2]{Sharpe} that there exists a Borel measurable
function $\psi$ on $D_{T}$ such that $Z=\psi(\mathbf{X})$,
$dt\otimes P'_{s,x}$-a.s. on $[0,\zeta_{\tau}]\times\Omega'$ for
q.e. $(s,x)\in D_{T}$. Finally, by Tanaka's formula,
\begin{equation}
\label{eq3.04}
-\int^t_0\hat{Y}_{s}f(r,\mathbf{X}_{r},Y_{r})\,dr\le
|Y_{t}|-|Y_{0}| +\int^t_0 \hat{Y}_{r-}\,dA^{\mu}_{r}-\int^t_0
\hat{Y}_{r}\,dB_{r},\quad 0\le t\le\zeta_{\tau}.
\end{equation}
If $\kappa\le0$ then by (H3),
$-\hat{Y}_{r}(f(r,\mathbf{X}_{r},Y_{r}))\ge
|f(r,\mathbf{X}_{r},Y_{r})|-|f(r,\mathbf{X}_{r},0)|$. On
substituting this into (\ref{eq3.04}) and then taking expectation
on both sides of the inequality we get (\ref{eq3.03}) with $C=1$.
The general case can be reduced to the case $\kappa\le0$ by the
standard change of variables.
\end{dow}
\medskip

In what follows we denote by $T_k$, $k\ge0$, the truncation
operator, i.e.
\[
T_k(y)=(-k)\vee(y\wedge k),\quad y\in\BR.
\]
\begin{lm}
\label{lm2.4} Assume that $\psi_{n},\psi$ are Borel measurable
functions on $D_{T}$ such that $\psi_{n}(\mathbf{X})\rightarrow
\psi(\mathbf{X})$, $dt\otimes P'_{s,x}$-a.e. on
$[0,\zeta_{\tau}]\times\Omega'$ for q.e. $(s,x)\in D_{T}$. Then
for some subsequence (still denoted by $n$), $\psi_{n}\rightarrow
\psi$, $m_1$-a.e. on $D_{T}$.
\end{lm}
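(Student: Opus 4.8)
The goal is to convert q.e.-convergence along the trajectories of $\mathbf{X}$ into $m_1$-a.e. convergence on $D_T$. The plan is to exploit the Revuz correspondence between positive continuous additive functionals of $\BX'$ and smooth (in particular Lebesgue-absolutely-continuous) measures on $\BR_+\times\BR^d$, which is precisely the tool that links the pathwise and the analytic points of view. The key observation is that since we may replace $\psi_n,\psi$ by their difference and compose with a bounded function, it suffices to show that if $g_n:D_T\to\BR$ are bounded Borel functions with $g_n(\mathbf{X})\to 0$, $dt\otimes P'_{s,x}$-a.e. on $[0,\zeta_\tau]\times\Omega'$ for q.e.\ $(s,x)$, then $g_n\to 0$ in $m_1$-measure on any relatively compact subset of $D_T$, from which $m_1$-a.e.\ convergence of a subsequence follows by the usual Riesz argument.

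First I would reduce to a bounded, integrable situation: set $g_n=\arctan(\psi_n-\psi)$ (or $g_n=(|\psi_n-\psi|\wedge 1)$), so that $g_n$ is bounded, $g_n(\mathbf{X})\to 0$ under the stated convergence, and $|g_n|\le 1$. Because $D_T$ is bounded, the Lebesgue measure $m_1$ restricted to $D_T$ is finite, and hence $g_n\cdot m_1 \in\MM_{0,b}(D_T)$ with a uniform total-variation bound. By Corollary \ref{wn.ql}, $g_n\in qL^1(D_T)$, so the additive functional $A^{g_n\cdot m_1}_t=\int_0^t g_n(\mathbf{X}_r)\,dr$ is a well-defined CAF of $\BX'$ in Revuz correspondence with $g_n\cdot m_1$ (as spelled out in the Remark following Corollary \ref{wn.ql}).

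Next I would pass from pathwise convergence to convergence of the associated additive functionals. Writing $G_n(s,x)=E'_{s,x}\int_0^{\zeta_\tau} g_n(\mathbf{X}_r)\,dr$, the hypothesis $g_n(\mathbf{X}_r)\to 0$ for $dt\otimes P'_{s,x}$-a.e.\ $(r,\omega)$ together with the uniform bound $|g_n|\le 1$ and $E'_{s,x}\zeta_\tau<\infty$ (which holds for q.e.\ $(s,x)$ since $D_T$ is bounded and $\zeta_\tau\le T$) gives, by dominated convergence, that $G_n(s,x)\to 0$ for q.e.\ $(s,x)\in D_T$. By the identity \eqref{meyer}, $G_n(s,x)=\int\!\!\int_{[0,T-s]\times D} g_n(z)\,p'_D(t,(s,x),z)\,dz\,dt$, so $G_n$ is, up to the parabolic Green kernel, the potential of the measure $g_n\cdot m_1$. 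The plan is then to test this against a fixed auxiliary function: I would integrate $G_n$ against a suitable reference measure to recover $\int_{D_T} g_n\,\varrho\,dm_1$ for a strictly positive weight $\varrho$, arguing exactly as in the proof of Lemma \ref{lm2.3} — there the functions $e_k$ built from potentials of $h_k=\mathbf{1}_{D^k_T}$ increase q.e.\ to $1$, and the same approximation lets me write $\int_{D^k_T} g_n\,dm_1$ as a limit of integrals of $G_n$-type potentials. Since $g_n\ge 0$ may fail, I would split $g_n$ into positive and negative parts, or simply note that $|G_n|\to 0$ q.e.\ controls $\int e_k\,d(g_n^+\cdot m_1)$ and $\int e_k\,d(g_n^-\cdot m_1)$ separately, yielding $\int_{D^k_T}|g_n|\,dm_1\to 0$ for each fixed $k$.

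The main obstacle will be the passage from the pointwise (in $(s,x)$) decay of the potentials $G_n$ to an $L^1$ or in-measure statement about $g_n$ itself, i.e.\ inverting the Green operator $g\mapsto G$. I expect to handle this not by inversion but by the duality/monotonicity mechanism of Lemma \ref{lm2.3}: using the strict positivity of $p'_D$ on $D^k_T$ and the fact that $e_k\nearrow 1$ q.e., the inequality $\int_{D_T} e_k\,d\gamma_n\le \|\gamma_n\|$-type estimates force $\int_{D^k_T}|g_n|\,dm_1\to 0$. Once $\int_{D^k_T}|g_n|\,dm_1\to 0$ for every $k$, a diagonal argument over $k\to\infty$ gives $g_n\to 0$ in $m_1$-measure on $D_T$ (using $D^k_T\nearrow D_T$ up to an $m_1$-null set), and extracting a subsequence converging $m_1$-a.e.\ yields $|\psi_n-\psi|\wedge 1\to 0$ $m_1$-a.e., hence $\psi_n\to\psi$ $m_1$-a.e., which is the claim.
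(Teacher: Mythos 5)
Your proposal is correct, but it takes a genuinely different route from the paper's, which is considerably more elementary. The paper fixes a single good starting point $(s,x)$ (with $s$ arbitrarily close to $0$), notes that by dominated convergence $E'_{s,x}\int_0^{\zeta_\tau}T_k(|\psi_n-\psi|)(\mathbf{X}_r)\,dr\to 0$, rewrites this expectation via the occupation identity (\ref{eq2.06}) as $\int_{[s,T]\times D}p_D(s,x,\theta,y)\,T_k(|\psi_n-\psi|)(\theta,y)\,d\theta\,dy$, extracts a subsequence along which the integrand tends to $0$ $m_1$-a.e., drops the kernel by the positivity of $p_D(s,x,\cdot,\cdot)$, and finishes with a diagonal argument over $s\downarrow 0$. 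You instead globalize: after truncating to $g_n=|\psi_n-\psi|\wedge 1$, you show the potentials $G_n\to 0$ q.e. and test them against the measures $\beta_k$ representing (via (\ref{eq.aron})) the potentials $e_k$ from the proof of Lemma \ref{lm2.3}, so that by Fubini $\int_{D_T}e_k\,g_n\,dm_1=\int_{D_T}G_n\,d\beta_k\to 0$ by dominated convergence --- this step needs $G_n\le T$, $\|\beta_k\|_{TV}<+\infty$ and, crucially, softness of $\beta_k$, so that q.e. convergence of $G_n$ is $\beta_k$-a.e. convergence; since $e_k=1$ on $D^k_T$, this yields $\int_{D^k_T}g_n\,dm_1\to 0$ for each $k$. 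Note that what your mechanism really uses is $e_k\ge\mathbf{1}_{D^k_T}$ together with the softness of $\beta_k$, not the ``strict positivity of $p'_D$ on $D^k_T$'' you invoke; in fact, not needing kernel positivity is an advantage of your route, since positivity of $p_D(s,x,\cdot,\cdot)$ from a single starting point is sensitive to, e.g., connectedness of $D$, a point the paper's shorter proof passes over silently. Conversely, your argument buys the stronger intermediate conclusion that the full sequence satisfies $\int_{D^k_T}(|\psi_n-\psi|\wedge 1)\,dm_1\to 0$ (local $L^1$ convergence of the truncated difference, not merely a.e. convergence of a subsequence), but at the price of importing the whole apparatus behind Lemma \ref{lm2.3} (obstacle-problem potentials, Aronson's representation, softness of $\beta_k$), whereas the paper's proof is self-contained, relying only on the occupation identity and the killed transition density.
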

\begin{dow}
Let $(s,x)\in D_{T}$ be such that $\psi_{n}(\mathbf{X})\rightarrow
\psi(\mathbf{X})$, $dt\otimes P'_{s,x}$-a.s. on
$[0,\zeta_{\tau}]\times\Omega'$. Then
\[
E'_{s,x}
\int_{0}^{\zeta_{\tau}}T_{k}(|\psi_{n}-\psi|)(\bX_{r})\,dr
\rightarrow0
\]
as $n\rightarrow\infty$. Since
\[
E'_{s,x}\int_{0}^{\zeta_{\tau}}T_{k}(|\psi_{n}-\psi|)(\bX_{r})\,dr
=\int_{D_{T}}p_{D}(s,x,\theta,y)T_{k}(|\psi_{n}-\psi|)(\theta,y)
\,d\theta\,dy,
\]
there exists a subsequence (still denoted by $n$) such that
$p_{D}(s,x,\theta,y)T_{k}(|\psi_{n}-\psi|)\rightarrow 0$,
$m_{1}$-a.e. on $[s,T]\times D$, which by the positivity of
$p_{D}(s,x,\cdot,\cdot)$ and the definition of $T_{k}$ implies
that $\psi_{n}\rightarrow\psi$, $m_{1}$-a.e. on $[s,T]\times D$.
Since $(s,x)$ can be chosen so that $s$ is arbitrary close to
zero, one can choose a further subsequence (still denoted by $n$)
such that $\psi_{n}\rightarrow\psi$, $m_{1}$-a.e. on $D_{T}$.
\end{dow}
\medskip

We say that a measurable function $u:D_T\rightarrow\BR$ is of
class (FD) if for q.e. $(s,x)\in D_T$ the process $[s,T]\ni
t\mapsto u(t,X_t)$ is of Doob's class (D) under the measure
$P_{s,x}$.

We say that  a measurable function $u:D_T\rightarrow\BR$ is
quasi-c\`adl\`ag if for q.e. $(s,x)\in D_T$ the process $[s,T]\ni
t\mapsto u(t,X_t)$ is c\`adl\`ag under $P_{s,x}$.

$\mathcal{FD}$ is the set of all quasi-c\`adl\`ag functions
$u:D_T\rightarrow\BR$. $\mathcal{FS}^{q}$ (resp.
$\mathcal{FD}^{q}$), $q>0$, is the set of all quasi-continuous
(resp. quasi-c\`adl\`ag) functions $u:D_T\rightarrow\BR$ such that
for q.e. $(s,x)\in D_T$,
\[
E_{s,x}\sup_{s\le t\le T}|u(t,X_{t})|^{q}<+\infty.
\]

$FM^{q}$, $q>0$, is the set of all measurable functions
$u:D_T\rightarrow\BR$ such that for q.e. $(s,x)\in D_T$,
\[
E_{s,x}(\int^T_s|u(t,X_t)|^2\,dt)^{q/2}<+\infty.
\]

$\mathcal{T}^{0,1}_{2}$ is the set of all measurable functions $u$
on $D_{T}$ such that for every $k\ge0$, $T_{k}(u)\in
L^{2}(0,T;H^{1}_0(D_{T}))$. From \cite[Lemma 2.1]{BBGGPV} it
follows that for every $u\in\mathcal{T}^{0,1}_{2}$ there exists a
unique measurable function $v$ on $D_{T}$ such that $\nabla
T_{k}(u)=\mathbf{1}_{\{|u|<k\}}v$,\ $m_{1}$-a.e. We shall set
$\nabla u=v$.

$W^{1,q}_0$, $q\ge1$, is the closure of the space
$C^{\infty}_c(D)$ in the Sobolev space $W^{1,q}$ of functions
$u\in L^2(D)$ having weak derivatives in $L^q(D)$. In particular,
$W^{1,2}_0=H^1_0$.

\begin{stw}
\label{stw3.7} Assume that $\mu\in\MM_{0,b}^{+}(D_{T})$,
$\varphi\in L^{1}(D)$, $\varphi \ge 0$. Then $v:D_T\rightarrow\BR$
defined as
\begin{equation}
\label{eq2.2} v(s,x)=E'_{s,x}\Big(\mathbf{1}_{\{\zeta>T-\tau(0)\}}
\varphi(\bX_{T-\tau(0)})
+\int_{0}^{\zeta_{\tau}}dA_{r}^{\mu}\Big),\quad (s,x)\in D_T
\end{equation}
is of class \mbox{\rm(FD)} and belongs to $\mathcal{FD}^{q}$ for
any $q\in(0,1)$.  Moreover, $v\in\mathcal{T}_{2}^{0,1}$, $v\in
L^{q}(0,T; W_{0}^{1,q})$ for $q\in[1,\frac{d+2}{d+1})$, $\nabla
v\in FM^{q}$ for $q\in(0,1)$ and for every $k\ge0$,
\begin{equation}
\label{eq2.3} E'_{s,x}\int_{0}^{\zeta_{\tau}}|\sigma\nabla
T_{k}(v)|^{2}(r, X_{r})\,dr \le 4k E'_{s,x}
\Big(\mathbf{1}_{\{\zeta>T-\tau(0)\}}\varphi(\bX_{T-\tau(0)})
+\int_{0}^{\zeta_{\tau}}dA_{r}^{\mu}\Big)
\end{equation}
for q.e. $(s,x)\in D_{T}$. Finally, for q.e. $(s,x)\in D_{T}$ the
pair $(v(\bX),\sigma\nabla v(\bX))$ is a solution of \mbox{\rm
BSDE}$_{s,x}(\varphi,D,d\mu)$.
\end{stw}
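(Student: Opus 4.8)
The plan is to read off most of the assertions from Proposition \ref{stw3.5} applied with $f\equiv0$, to extract the energy estimate (\ref{eq2.3}) from an It\^o--Tanaka computation on the BSDE solution, and to obtain the Sobolev regularity together with the crucial identification of the second BSDE component with $\sigma\nabla v$ by approximating $(\mu,\varphi)$ with regular data.

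First I would note that $f\equiv0$ trivially satisfies \mbox{\rm(H1)--(H5)} and \mbox{\rm(HZ)} (take $\kappa=0$, $\varrho=0$), so Proposition \ref{stw3.5} applies and its representation of $u$ reduces to (\ref{eq2.2}); hence $v=u$. Thus $v$ is of class \mbox{\rm(FD)} and $v\in\mathcal{FD}^{q}$ for $q\in(0,1)$, and there exist a pair $(Y,Z)$ and a Borel function $\psi$ on $D_{T}$ with $Y_{t}=v(\bX_{t})$, $Z=\psi(\bX)$, $Z\in M^{q}$ for $q\in(0,1)$ (by \cite{Kl:BSM}), such that $(v(\bX),\psi(\bX))$ solves \mbox{BSDE}$_{s,x}(\varphi,D,d\mu)$ for q.e. $(s,x)\in D_{T}$. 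Since $\varphi\ge0$ and $\mu\ge0$, formula (\ref{eq2.2}) gives $v\ge0$, so $Y\ge0$. The entire proposition will follow once I prove $\psi=\sigma\nabla v$, which upgrades the abstract integrand $Z$ to the true spatial gradient.

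Next I would establish the energy estimate. Put $G_{k}(y)=\int_{0}^{y}2T_{k}(r)\,dr$; then $G_{k}$ is convex, $G_{k}'=2T_{k}$, $\tfrac12 G_{k}''=\mathbf{1}_{\{|\cdot|<k\}}$ in the distributional sense, and $0\le G_{k}(y)\le 2k|y|$. Applying the It\^o--Tanaka formula to $G_{k}(Y)$ on $[0,\zeta_{\tau}]$, using $dY_{t}=-dA^{\mu}_{t}+Z_{t}\,dB_{t}$ and $d\langle Y^{c}\rangle_{t}=|Z_{t}|^{2}\,dt$ (the jump terms enter with a favourable sign by convexity of $G_{k}$), and noting $0\le 2T_{k}(Y_{r-})\le 2k$ because $Y\ge0$, I get after taking expectations and using $dA^{\mu}\ge0$ and $Y_{\zeta_{\tau}}=\mathbf{1}_{\{\zeta>T-\tau(0)\}}\varphi(\bX_{T-\tau(0)})$
\[
E'_{s,x}\int_{0}^{\zeta_{\tau}}\mathbf{1}_{\{|Y_{r}|<k\}}|Z_{r}|^{2}\,dr
\le E'_{s,x}G_{k}(Y_{\zeta_{\tau}})+2k\,E'_{s,x}\!\int_{0}^{\zeta_{\tau}}\!dA^{\mu}_{r}
\le 2k\,E'_{s,x}\Big(\mathbf{1}_{\{\zeta>T-\tau(0)\}}\varphi(\bX_{T-\tau(0)})+\int_{0}^{\zeta_{\tau}}\!dA^{\mu}_{r}\Big).
\]
Once $\psi=\sigma\nabla v$ is known this is exactly (\ref{eq2.3}) (with room to spare), since then $\mathbf{1}_{\{|Y_{r}|<k\}}|Z_{r}|^{2}=|\sigma\nabla T_{k}(v)|^{2}(\bX_{r})$; moreover $\nabla v\in FM^{q}$, $q\in(0,1)$, follows from $Z=\sigma\nabla v(\bX)\in M^{q}$ and boundedness of $\sigma^{-1}$.

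Finally, for the identification and the Sobolev estimates I would pick $\varphi_{n}\in C_{c}^{\infty}(D)$ with $0\le\varphi_{n}\to\varphi$ in $L^{1}(D)$ and smooth $\mu_{n}\ge0$, $\mu_{n}\to\mu$, with $\sup_{n}(\|\mu_{n}\|_{TV}+\|\varphi_{n}\|_{L^{1}})<\infty$, so that the potentials $v_{n}$ are weak solutions in $\WW$ of the linear Cauchy--Dirichlet problem. For such regular data the stochastic representation of \cite{Kl:JOTP,R:PTRF} gives $Z_{n}=\sigma\nabla v_{n}(\bX)$, while testing the equation with $T_{k}(v_{n})$ and using ellipticity yields $\int_{D_{T}}|\nabla T_{k}(v_{n})|^{2}\,dm_{1}\le Ck$ uniformly in $n$. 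The stability results of \cite{Kl:BSM} give, along a subsequence, $v_{n}\to v$ q.e. (hence $m_{1}$-a.e.) and $Z_{n}\to Z$ $dt\otimes P'_{s,x}$-a.e.; thus $\sigma\nabla v_{n}(\bX)\to\psi(\bX)$, and Lemma \ref{lm2.4} yields $\sigma\nabla v_{n}\to\psi$ $m_{1}$-a.e. on $D_{T}$. The uniform $L^{2}$ bound gives $\nabla T_{k}(v_{n})\rightharpoonup\nabla T_{k}(v)$ weakly in $L^{2}(D_{T})$, so $T_{k}(v)\in L^{2}(0,T;H^{1}_{0}(D))$ and $v\in\mathcal{T}^{0,1}_{2}$; matching this weak limit with the $m_{1}$-a.e. limit $\sigma\nabla T_{k}(v_{n})\to\mathbf{1}_{\{|v|<k\}}\psi$ on $\{|v|<k\}$ forces $\psi=\sigma\nabla v$ there, and letting $k\to\infty$ gives $\psi=\sigma\nabla v$ $m_{1}$-a.e. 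The membership $v\in L^{q}(0,T;W^{1,q}_{0})$ for $q\in[1,\tfrac{d+2}{d+1})$ then follows from $\int_{D_{T}}|\nabla T_{k}(v)|^{2}\le Ck$ by the standard Boccardo--Gallou\"et truncation argument. The main obstacle is precisely this last coupling: reconciling the probabilistic convergence $Z_{n}\to Z$ in $dt\otimes P'_{s,x}$ with the analytic weak convergence $\nabla T_{k}(v_{n})\rightharpoonup\nabla T_{k}(v)$ in $m_{1}$, which is what transfers regularity between the stochastic and the variational pictures and pins down $\psi=\sigma\nabla v$.
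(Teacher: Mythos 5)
Your first two steps are sound and essentially coincide with the paper's argument: reading off the class (FD), $\mathcal{FD}^{q}$ membership and the BSDE representation from Proposition \ref{stw3.5} with $f\equiv0$, and deriving the energy bound by It\^o--Tanaka (the paper's Step 1 does exactly your computation, in the equivalent local-time/occupation-time form, and your constant $2k$ is even better than the $4k$ in (\ref{eq2.3})). The problem is your third step, which you yourself flag as the crux: the identification $\psi=\sigma\nabla v$ by approximation. You propose to pick \emph{smooth} $\mu_{n}\ge0$ with ``$\mu_{n}\to\mu$'' and then invoke ``the stability results of \cite{Kl:BSM}'' to get $v_{n}\to v$ q.e.\ and $Z_{n}\to Z$, $dt\otimes P'_{s,x}$-a.e. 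This step does not survive scrutiny. If ``$\mu_{n}\to\mu$'' means weak convergence, no BSDE stability theorem applies: weak convergence of the data measures gives no control on the associated additive functionals $A^{\mu_{n}}$ along the paths of $\bX$, hence none on $(Y^{n},Z^{n})$. If it means convergence in total variation, the required sequence simply does not exist: a soft measure may be singular with respect to $m_{1}$ (it only needs to not charge sets of zero parabolic capacity), and for singular $\mu$ one has $\|\mu-g\cdot m_{1}\|_{TV}\ge\|\mu\|_{TV}$ for every $g\in L^{1}(D_{T})$, so no absolutely continuous (let alone smooth) approximation in $\|\cdot\|_{TV}$ is possible. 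The results of \cite{Kl:BSM} that the paper uses (Theorem 5.7 there) are not generic stability statements but penalization theorems, valid for one specific approximation scheme.

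The paper's way around this is precisely the construction you are missing: it approximates $\mu$ \emph{from inside the problem}, by penalizing against the solution itself. One sets $A^{n}_{t}=\int_{0}^{t}n(Y^{n}_{r}-Y_{r})^{-}\,dr$, i.e.\ replaces $d A^{\mu}$ by the absolutely continuous measures $\phi_{n}\cdot m_{1}$ with $\phi_{n}=n(v_{n}-v)^{-}$; for this scheme \cite[Theorem 5.7]{Kl:BSM} gives exactly the monotone convergence $Y^{n}\nearrow Y$ and $Z^{n}\to Z$, $dt\otimes P'_{s,x}$-a.e., that your argument needs, and Lemma \ref{lm2.3} together with $v_{n}\le v$ gives the uniform bound $\|\phi_{n}\|_{TV}\le\|\mu\|_{TV}$. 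Even then the data $(\varphi,\phi_{n})$ are only in $L^{1}$, so a \emph{second} layer of truncation ($\varphi\wedge m$, $\phi_{n}\wedge m$) is needed to reach $L^{2}$ data, for which the variational solution exists in $\WW(D_{T})$ and the representation of \cite{Kl:JOTP} identifies the second BSDE component with $\sigma\nabla\cdot(\bX)$; only at that regular level does your ``read off $Z_{n}=\sigma\nabla v_{n}(\bX)$'' step become legitimate. After these two approximation layers, the closing argument is the one you describe (uniform truncated energy bounds, \cite{Petitta} for weak $L^{q}$ convergence of gradients, Lemma \ref{lm2.4} for the a.e.\ limit, and matching of the two limits), so your endgame is right, but the bridge leading to it--the existence of an approximating sequence for which the probabilistic and analytic convergences both hold--is exactly what your proposal leaves unproven, and it cannot be repaired by a generic choice of smooth $\mu_{n}$.
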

\begin{dow}
By Proposition \ref{stw3.5} there exists a pair of processes
$(Y,Z)$ such that $Y$ is of class (D),
$(Y,Z)\in\mathcal{D}^{q}\otimes M^{q}$ for $q\in(0,1)$ and for
q.e. $(s,x)\in D_{T}$ the pair $(Y,Z)$ is a solution of
BSDE${}_{s,x}(\varphi,D,d\mu)$, i.e.
\[
Y_{t}=\mathbf{1}_{\{\zeta>T-\tau(0)\}}\varphi(\bX_{T-\tau(0)}) +
\int_{t}^{\zeta_{\tau}}dA_{r}^{\mu}
-\int_{t}^{\zeta_{\tau}}Z_{r}\,dB_{r},\quad t\in[0,\zeta_{\tau}].
\]
By Proposition \ref{stw3.5} again, for $n\in\BN$ there is a pair
of processes $(Y^{n},Z^{n})$ such that $Y^{n}$ is of class (D),
$(Y^{n},Z^{n})\in \mathcal{D}^{q}\otimes M^{q}$ for $q\in(0,1)$
and $(Y^{n},Z^{n})$ is a solution of the following BSDE${}_{s,x}$
\begin{equation}
\label{eq3.14}
Y_{t}^{n}=\mathbf{1}_{\{\zeta>T-\tau(0)\}}\varphi(\bX_{T-\tau(0)})
+ \int_{t}^{\zeta_{\tau}}n(Y_{r}^{n}-Y_{r})^{-}\,dr
-\int_{t}^{\zeta_{\tau}}Z_{r}^{n}\,dB_{r},\quad
t\in[0,\zeta_{\tau}]
\end{equation}
for q.e. $(s,x)\in D_{T}$. By \cite[Theorem 5.7]{Kl:BSM}, for q.e.
$(s,x)\in D_{T}$,
\begin{equation}
\label{eq3.15} Y_{t}^{n}\nearrow Y_{t},\quad t\ge 0,\,P'_{s,x}
\mbox{-a.s.},
\end{equation}
\begin{equation}
\label{eq3.16} Z^{n}\rightarrow Z,\quad dt\otimes
P'_{s,x}\mbox{-a.e. on } [0,\zeta_{\tau}]\times\Omega'.
\end{equation}
Let $A_{t}^{n}=\int_{0}^tn(Y_{r}^{n}-Y_{r})^{-}\,dr$. By
Proposition \ref{stw3.5}, $Y_{t}=v(\bX_{t})$, $P'_{s,x}$-a.s. for
$t\in[0,T-\tau(0)]$ and $Z=\psi(\bX)$, $dt\otimes P'_{s,x}$-a.e.
on $[0,\zeta]\times\Omega'$ for some measurable function $\psi$ on
$D_{T}$. Therefore
\[
A_{t}^{n}= \int_{0}^{t} n(Y_{r}^{n}- v(\bX_{r}))^{-}\,dr, \quad
t\in[0,\zeta_{\tau}], \quad P'_{s,x}\mbox{-a.s.}
\]
From this, (\ref{eq3.14}) and  Proposition \ref{stw3.5},
\begin{equation}
\label{eq3.17}
Y_{t}^{n}=v_{n}(\bX_{t}),\quad P'_{s,x}\mbox{-a.s. for }t\in
[0,\zeta_{\tau}]
\end{equation}
and
\begin{equation}
\label{eq3.18} Z^{n}= \psi_{n}(\bX),\quad dt\otimes
P'_{s,x}\mbox{-a.e. on }[0,\zeta_{\tau}]\times\Omega',
\end{equation}
where
\[
v_{n}(s,x)=E'_{s,x}\Big(\mathbf{1}_{\{\zeta>T-\tau(0)\}}
\varphi(\bX_{T-\tau(0)})
+\int_{0}^{\zeta_{\tau}}\phi_{n}(\mathbf{X}_{r})\,dr\Big),\quad
\phi_{n}=n(v_{n}-v)^{-}
\]
and $\psi_{n}$ is a measurable function on $D_{T}$. Now fix $n\in
\mathbb{N}$ and put $(\overline{Y}, \overline{Z})=(Y^{n},Z^{n})$,
$\overline{\phi}=\phi_{n}$. Then for q.e. $(s,x)\in D_{T}$ the
pair $(\overline{Y}, \overline{Z})$ is a solution of BSDE$_{s,x}
(\varphi,D,\bar{\phi})$. Let $\varphi_{m}= \varphi\wedge m$,
$\overline{\phi}_{m}= \overline{\phi}\wedge m$. By Proposition
\ref{stw3.5}, for q.e. $(s,x) \in D_{T}$ there exists a solution
$(\overline{Y}^{m}, \overline{Z}^{m})$ of BSDE$_{s,x}(\varphi_{m},
D, \overline{\phi}_{m})$ such that $\overline{Y}^{m}$ is of class
(D) and $(\overline{Y}^{m},
\overline{Z}^{m})\in\mathcal{D}^{q}\otimes M^{q}$ for $q\in(0,1)$
(it is known that in fact $\overline{Y}^{m}$ is continuous). By
\cite[Theorem 5.7]{Kl:BSM},
\begin{equation}
\label{eq2.6} \overline{Y}_{t}^{m}\nearrow \overline{Y}_{t},\quad
t\ge 0,\,P'_{s,x}\mbox{-a.s.},
\end{equation}
\begin{equation}
\label{eq2.7} \overline{Z}^{m}\rightarrow\overline{Z},\quad
dt\otimes P'_{s,x} \mbox{-a.e. on } [0,\zeta_{\tau}] \times
\Omega'.
\end{equation}

We divide the proof that $v$ has the desired regularity properties
into 3 steps. \smallskip\\
Step 1. We assume that $\varphi\in L^{2}(D)$ and $\mu=g\cdot
m_{1}$ for some $g\in L^{2}(D_{T})$, $g\ge0$. Let $w\in
\mathcal{W}(D_{T})$ be a weak solution of the problem
\[
\left\{
\begin{array}{l}
\frac{\partial w}{\partial t}+A_{t}w=-g\mbox{ in }D_{T},\\
w(T,\cdot)=\varphi, \quad w(t, \cdot)_{|\partial D}=0,\quad t
\in(0,T).
\end{array}
\right.
\]
Let $\tilde{w}: S_{T}:=[0,T]\times\BR^d\rightarrow \mathbb{R}$ be
an extension of $w$ on $S_{T}$ such that
\[
\tilde{w}(t,x)= \left\{
\begin{array}{l}w(t,x), \quad (t,x) \in D_{T}, \\
w(t,x)=0, \quad (t,x) \in S_{T}\setminus D_{T}.
\end{array}
\right.
\]
Then $\tilde{w}\in\mathcal{W}(S_{T})$ and hence $\frac{\partial
\tilde{w}}{\partial t}+A_{t}\tilde{w} \in L^{2}(0,T;
H^{-1}(\mathbb{R}^{d}))$. Let $g_{0}, G \in L^{2}(S_{T})$ be such
that $\frac{\partial \tilde{w}}{\partial t}+
L_{t}\tilde{w}=-g_{0}-\mbox{div}(G)$. By \cite[Theorem
4.3]{K:JTP}, $\tilde{w}$ has a quasi-continuous $m_{1}$ version
(still denoted by $\tilde{w}$) such that $\tilde{w}\in\FF
S^{2},\nabla \tilde{w} \in FM^{2}$ and for q.e. $(s,x)\in S_{T}$,
\begin{align*}
\tdw(t,X_{t})&=\tdw(s,x)- \int_{s}^{t}g_{0}(r, X_{r})\,dr
-\int_{s}^{t}G(r, X_{r})\,d^{*}X_{r}\\
&\quad + \int_{s}^{t}\sigma \nabla\tdw(r, X_{r})\,dB_{s,r},\quad s
\le t \le T, \quad P_{s,x}\mbox{-a.s.}
\end{align*}
Hence, by Lemma \ref{lm1.1},
\begin{align*}
\tdw(\mathbf{X}_{t})&= \mathbf{1}_{\{\zeta>T-\tau(0)\}}
\tdw(\mathbf{X}_{T-\tau(0)})
+\int_{t}^{\zeta_{\tau}}g(\mathbf{X}_{r})\,dr\\
&\quad-\int_{t}^{\zeta_{\tau}}\sigma\nabla\tilde
w(\mathbf{X}_{r})\,dB_{r}, \quad t\in[0,\zeta_{\tau}],\quad
P'_{s,x}\mbox{-a.s.}
\end{align*}
Observe also that
$E'_{s,x}|(\tdw-\varphi)(\mathbf{X}_{T-\tau(0)})|
=\int_{D}p_{D}(s,x,T,y)|\tdw-\varphi|(T,y)\,dy=0$. Thus
\begin{equation}
\label{eq3.07} \tilde w(s,x)
=E'_{s,x}\Big(\mathbf{1}_{\{\zeta>T-\tau(0)\}}
\varphi(\bfX_{T-\tau(0)}) +\int_{0}^{\zeta_{\tau}}
g(\mathbf{X}_{r})\,dr\Big).
\end{equation}
Therefore $v=w$ satisfies all assertions of the proposition except
from (\ref{eq2.3}). To prove (\ref{eq2.3}), let us fix $k>0$ and
$z\in \mathbb{R}$. Using the convention of Remark \ref{rem3.1}, by
Tanaka's formula and taking expectations,
\begin{align*}
|\tdw(s,x)-z|+E'_{s,x}L^{z}_{\zeta_{\tau}}(\tdw(\mathbf{X}))
&=E'_{s,x}|\mathbf{1}_{\{\zeta>T-\tau(0)\}}
\varphi(\bfX_{T-\tau(0)})-z|\\
&\quad +E'_{s,x}\int_{0}^{\zeta_{\tau}}\hat{\mbox{\rm sgn}}
(\tdw(\bfX_{r})-z)g(\bfX_{r})\,dr,
\end{align*}
where $\{L^z_t(\tilde w(\bfX)),t\ge0\}$ denotes the (symmetric)
local time of $\tilde w(\bfX)$ at $z$  and
$\hat{\mbox{sgn}}(x)={\mathbf{1}}_{x\neq0}\frac{x}{|x|}$\,. Hence
\[
E'_{s,x}L^{z}_{\zeta_{\tau}}(\tdw(\bfX)) \le
E'_{s,x}|\mathbf{1}_{\{\zeta>T-\tau(0)\}}\varphi(\bfX_{T-\tau(0)})
-\tdw(s,x)| + E'_{s,x}\int_{0}^{\zeta_{\tau}}g(\bfX_{r})\,dr.
\]
Multiplying the above inequality by the function
$i(z)=\mathbf{1}_{[-k,k]}(z)$, integrating with respect to $z$ and
applying the occupation time formula and Fubini's theorem, we get
\[
E'_{s,x}\int_{0}^{\zeta_{\tau}}\!|\sigma\nabla
T_{k}(\tdw)|^{2}(\bfX_{r})\,dr \le
2kE'_{s,x}\Big(\mathbf{1}_{\{\zeta>T-\tau(0)\}}\varphi(\bfX_{T-\tau(0)})
+\tdw(s,x)+ \int_{0}^{\zeta_{\tau}}\!\!g(\bfX_{r})\,dr\Big),
\]
which when combined with (\ref{eq3.07}) yields (\ref{eq2.3}).
\smallskip\\
Step 2. We are going to show that $v_{n}$ satisfies all the
assertions of the proposition. To shorten notation, we write
$\overline{v}$ (resp. $\overline{\psi}$) instead of $v_n$ (resp.
$\psi_{n}$). Since $\varphi_m\in L^2(D)$ and $\overline{\phi}_m\in
L^2(D_T)$, it follows from Step 1 that there exists
$\overline{v}_{m}\in\mathcal{FS}^{2}$ such that
$\overline{v}_{m}\in \mathcal{W}(D_{T})$ and for q.e. $(s,x)\in
D_{T}$,
\[
\overline{v}_{m}(\bfX_{t})=\overline{Y}_{t}^{m},\quad t\in
[0,\zeta_{\tau}], \quad P'_{s,x} \mbox{-a.s.},
\]
\[
\sigma \nabla \overline{v}_{m}(\bfX)=\overline{Z}^{m}, \quad
dt\otimes P'_{s,x} \mbox{-a.s. on } [0,\zeta_{\tau}]\times\Omega'.
\]
Put $\overline{v}'(s,x)=\limsup_{m \rightarrow\infty}
\overline{v}_{m}(s,x)$, $(s,x)\in D_{T}$. Then by (\ref{eq2.6}),
$\overline{v}'(\bfX_{t})=\overline{Y}_{t}$, $t\in
[0,\zeta_{\tau}]$, $P'_{s,x}$-a.s. for q.e. $(s,x)\in D_{T}$. This
implies that $\overline{v}'\in\FF\mathcal{D}^{q}$ for $q \in
(0,1)$, $\overline{v}'$ is of class (FD) and
$\overline{v}'=\overline{v}$ q.e. The last statement implies that
$\overline{v}$ belongs to the same spaces of functions as
$\overline{v}'$. We have proved that (\ref{eq2.3}) is true if
$\mu=g\cdot m_{1}$ for some $g \in L^{2}(D_{T})$, $g\ge0$, and
$\varphi\in L^{2}(D)$. Therefore for q.e. $(s,x) \in D_{T}$,
\begin{align}
\label{eq2.8} &E'_{s,x}\int_{0}^{\zeta_{\tau}}|\sigma
\nabla T_{k}(\ovV_{m})|(\bfX_{r})\,dr \nonumber\\
&\qquad\le 4k
E'_{s,x}\Big(\mathbf{1}_{\{\zeta>T-\tau(0)\}}\varphi(\bfX_{T-\tau(0)})
+\int_{0}^{\zeta_{\tau}}\bar{\phi}_{m}(\bfX_{r})\,dr \Big)
\nonumber \\
&\qquad\le 4k
E'_{s,x}\Big(\mathbf{1}_{\{\zeta>T-\tau(0)\}}\varphi(\bfX_{T-\tau(0)})
+\int_{0}^{\zeta_{\tau}}\bar{\phi}(\bfX_{r})\,dr \Big).
\end{align}
From this and Lemma \ref{lm2.3},
\begin{equation}
\label{eq2.9} \int_{D_{T}}|\sigma
\nabla(T_{k}(\ovV_{m}))|^{2}\,dm_{1}\le 4k(\|\varphi\|_{L^{1}}+
\|\overline{\phi}_{m}\|_{L^{1}})\le
4k(\|\varphi\|_{L^{1}}+\|\bar{\phi}\|_{L^{1}}).
\end{equation}
Due to \cite{Petitta}, from (\ref{eq2.9}) it follows that
$\ovV\in\mathcal{T}_{2}^{0,1}\cap L^{q}(0,T;W_{0}^{1, q}(D))$ and
for some subsequence (still denoted by $n$), $\sigma
\nabla\ovV_{m}\rightarrow\sigma \nabla\ovV$ weakly in
$L^{q}(D_{T})$ for $q\in[1,\frac{d+2}{d+1})$. On the other hand,
by (\ref{eq2.7}) and Lemma \ref{lm2.4},
$\sigma\nabla\ovV_{m}\rightarrow \overline{\psi}$, $m_{1}$-a.e. on
$D_{T}$. Hence $\overline{\psi}=\sigma \nabla\ovV$, $m_{1}$-a.e.
on $D_{T}$. Summarizing, $\ovV$ satisfies all the
assertions of the proposition.
\smallskip\\
Step 3. Using (\ref{eq3.15})--(\ref{eq3.18}) one can show in the
same manner as in  Step 2 that $v$ satisfies all the assertions of
the proposition. The only difference from Step 2 is that in
estimates of $\sigma\nabla T_k(v_n)$ of the form (\ref{eq2.8}),
(\ref{eq2.9}) the function $\bar{\phi}_{m}$ is replaced by
$\phi_{n}$, so to obtain the  estimates for $\sigma\nabla
T_k(v_n)$ which do not depend on $n$  we have to show that
\begin{equation}
\label{eq2.11}
E'_{s,x}\int_{0}^{\zeta_{\tau}}\phi_{n}(\mathbf{X}_{r})\,dr \le
E'_{s,x}\int_{0}^{\zeta_{\tau}}dA_{r}^{\mu}
\end{equation}
for q.e. $(s,x)\in D_{T}$ and that $\|\phi_{n}\|_{TV}\le
\|\mu\|_{TV}$. But (\ref{eq2.11}) follows from the fact that
$v_{n}(s,x)\le v(s,x)$ for q.e. $(s,x)\in D_T$ and the estimate
for $\|\phi_{n}\|_{TV}$ follows from (\ref{eq2.11}) and Lemma
\ref{lm2.3}.
\end{dow}

\nsubsection{Cauchy-Dirichlet problem}
\label{sec4}

It is convenient to begin the study of the obstacle problem
(\ref{eq1.1}) with the study of the Cauchy-Dirichlet problem
\begin{equation}
\label{eq2.C} \left\{
\begin{array}{l}\frac{\partial u}{\partial t}+A_{t}u
=-f_{u}-\mu, \smallskip \\
u(T,\cdot)=\varphi,\quad u(t,\cdot)_{|\partial D}=0,\quad t\in
(0,T),
\end{array}
\right.
\end{equation}
which can be regarded as problem (\ref{eq1.1}) with
$h_1\equiv-\infty$, $h_2\equiv+\infty$.

Let us recall that every functional $\Phi\in \mathcal{W}'(D_{T})$
admits decomposition of the form
\begin{equation}
\label{eq4.01} \Phi=(g)_{t}+\mbox{div}(G)+f,
\end{equation}
where $g \in L^{2}(0,T; H^1_0(D))$, $G=(G^1,\dots,G^d)$, $f \in
L^{2}(D_{T})$, i.e. for every $\eta\in\mathcal{W}(D_{T})$,
\[
\Phi(\eta)=-\langle g, \frac{\partial\eta}{\partial t}
\rangle-\langle G,\nabla\eta\rangle_{L^{2}} + \langle f, \eta
\rangle_{L^{2}}\,,
\]
where $\langle\cdot,\cdot\rangle$ denotes the duality between
$L^{2}(0,T;H_{0}^{1}(D))$ and $L^{2}(0,T;H^{-1}(D))$. It is also
known that every measure $\mu\in\MM_{0,b}(D_{T})$ admits
decomposition of the form
\begin{equation}
\label{eq4.1} \mu=\Phi+f,
\end{equation}
where $\Phi \in \mathcal{W}'(D_{T})$, $f \in L^{1}(D_{T})$, i.e.
for every $\eta \in C_{0}^{\infty}(D_{T})$,
\[
\int_{D_{T}}\eta \,d\mu= \Phi(\eta)+\int_{D_{T}}f\eta\,dm_1.
\]
Accordingly, $\mu\in\MM_{0,b}(D_{T})$ can be written in the form
\begin{equation}
\label{eq4.2} \mu=-(g)_{t} + \mbox{div}(G)+f
\end{equation}
for some $g\in L^{2}(0,T;H^1_0(D))$, $G\in L^{2}(D_{T})$ and $f\in
L^{1}(D_{T})$. Let us stress that in general, $\Phi,f$ of the
decomposition (\ref{eq4.1}) cannot be taken nonnegative even if
$\mu$ is nonnegative.

We say that a triple $(g,G,f)$ is the decomposition of
$\mu\in\MM_{0,b}(D_{T})$ if (\ref{eq4.2}) is satisfied.

Let $\mu \in \MM_{0,b}(D_{T})$, $\varphi\in L^{1}(D)$ and let
$f:D_T\times\mathbb{R}\rightarrow\mathbb{R} $ be a Carath\`eodory
function.

\begin{df}
We say that a measurable function $u: D_{T} \rightarrow
\mathbb{R}$ is a renormalized solution of the problem
(\ref{eq2.C}) if
\begin{enumerate}
\item[(a)]$f_u\in L^{1}(D_{T})$,
\item[(b)]For some decomposition $(g,G,f)$ of $\mu$,
$u-g \in L^{\infty}(0,T; L^{2}(D))$, $T_{k}(u-g)\in
L^{2}(0,T;H^{1}_0(D))$ for $k\ge0$ and
\[
\lim_{k\rightarrow+\infty}\int_{\{k\le |u-g|\le k+1\}}|\nabla
u|\,dm_{1}=0,
\]
\item[(c)]For any $S \in W^{2, \infty}(\BR)$ with compact support,
\begin{align*}
&\frac{\partial}{\partial t}(S(u-g))+ \mbox{div}(a\nabla u
S'(u-g)) -S''(u-g)a\nabla u \cdot \nabla(u-g) \\
&\qquad=-S'(u-g)f-\mbox{div}(GS'(u-g))+GS''(u-g)\cdot\nabla(u-g)
\end{align*}
in the sense of distributions,
\item[(d)] $T_{k}(u-g)(T)=T_{k}(\varphi)$ in $L^{2}(D)$ for all $k \ge0$.
\end{enumerate}
\end{df}

Note that a different but equivalent definition of renormalized
solution of (\ref{eq2.C}) is given in \cite[Definition 4.1]{PPP}.

Set
\[
\Theta_{k}(s)=\int_{0}^{s}T_{k}(y)\,dy,\quad s\in\BR
\]
and
\[
E=\{\eta \in L^{2}(0, T; H^{1}_{0}(D)) \cap L^{\infty}(D_{T}):
\frac{\partial \eta}{\partial t}\in L^{2}(0, T;
H^{-1}(D))+L^{1}(D_{T})\}.
\]

\begin{df}
We say that a measurable function $u: D_{T} \rightarrow
\mathbb{R}$ is an entropy solution of (\ref{eq2.C}) if $f_{u}\in
L^{1}(D_{T})$, for some decomposition $(g, G, f)$ of $\mu$,
$T_{k}(u-g)\in L^{2}(0,T;H^{1}_{0}(D))$ for any $k\ge 0$,
$[0,T]\ni t\mapsto\int_{D}\Theta_{k}(u-g-\eta)(t,\cdot)\,dm$ is
continuous for any $\eta\in E$, $k\ge 0$, and
\begin{align}
\label{eq2.E} \nonumber & \int_{D}\Theta_{k}(u-g-\eta)(0,
\cdot)\,dm - \int_{D}\Theta_{k}(\varphi - \eta(T, \cdot))\,dm - \langle
\eta_{t}, T_{k}(u-g-\eta) \rangle \\& \qquad + \int_{D_{T}}a\nabla
u \cdot \nabla T_{k}(u-g-\eta)\,dm_{1} \nonumber \le
\int_{D_{T}}fT_{k}(u-g-\eta)\,dm_{1} \\& \qquad -
\int_{D_{T}}G\cdot \nabla T_{k}(u-g-\eta)\,dm_{1}
+\int_{D_{T}}f_{u}T_{k}(u-g-\eta)\,dm_{1}.
\end{align}
\end{df}

\begin{uw}
\label{uw4.1} (i) From  \cite[Theorem 3.1]{DP} it follows that $u$
is a renormalized solution of (\ref{eq2.C}) iff it is an entropy
solution of (\ref{eq2.C}).
\smallskip\\
(ii) If $u$ is a renormalized solution of (\ref{eq2.C}) then it is
a distributional solution of (\ref{eq2.C}) in the sense that
$u,\nabla u\in L^1(D_T)$ and for any $\eta\in C^{\infty}_0(D_T)$,
\begin{equation}
\label{eq4.6} \int_{D_T}u\frac{\partial\eta}{\partial t}\,dm_1
+\int_{D_T}a\nabla u\cdot\nabla\eta\,dm_1
=\int_D\varphi\eta(T,\cdot)\,dm
+\int_{D_T}f_u\eta\,dm_1+\int_{D_T}\eta\,d\mu
\end{equation}
(see Proposition 4.5 and Theorem 4.11 in \cite{PPP}).
\end{uw}

\begin{lm}
\label{lm2.5} Assume that $\mu_{n},\mu\in\mathcal{M}_{0,b}(D_{T})$
and $\|\mu_{n}-\mu\|_{TV} \rightarrow 0$. Then there exist $g_{n},
g\in L^{2}(0, T; H_{0}^{1}(D))$, $G_{n}, G \in L^{2}(D_{T})$,
$f_{n},f\in L^{1}(D_{T})$ such that
\begin{equation}
\label{eq4.7} \mu_{n}=(g_{n})_{t}+\mbox{\rm div}(G_{n}) + f_{n},
\quad \mu=(g)_{t}+\mbox{\rm div}(G)+f
\end{equation}
and
\begin{equation}
\label{eq4.08} G_{n} \rightarrow G\mbox{ in }L^{2}(D_{T}), \quad
f_{n} \rightarrow f\mbox{ in }L^{1}(D_{T}), \quad g_{n}
\rightarrow g\mbox{ in }L^{2}(0, T; H^{1}_{0}(D)).
\end{equation}
\end{lm}
\begin{dow}
From the proof of \cite[Theorem 2.7]{DPP} it follows that each
$\mu\in \mathcal{M}_{0,b}(D_{T})$ admits a decomposition of the
form (\ref{eq4.1}) with $\Phi,f$ such that
$\|\Phi\|_{\mathcal{W'}(D_{T})}\le 1$, $\|f\|_{L^{1}}\le
\|\mu\|_{TV}$.  Moreover, by \cite[Lemma 2.24]{DPP}, $\Phi$ admits
decomposition (\ref{eq4.01}) with $g,G,h$ such that
\[
\|g\|_{L^{2}(0,T;H^{1}_{0}(D))} + \|G\|_{L^{2}} + \|h\|_{L^{2}}\le
\|\Phi\|_{\mathcal{W}'(D_{T})}.
\]
Therefore repeating arguments from the proof of \cite[Corollary
3.2]{Leone} we get the desired result.
\end{dow}

\begin{lm}
\label{lm2.6} Let $\{\mu_{n}\} \subset \mathcal{M}_{0,b}(D_{T})$,
$\mu \in\mathcal{M}_{0,b}(D_{T})$, $\{\varphi_{n}\} \subset
L^{1}(D)$, $\varphi \in L^{1}(D)$ and let $u_{n}$ (resp. u) be a
renormalized solution of \mbox{\rm(\ref{eq2.C})} with terminal
condition $\varphi_{n}$ (resp. $\varphi$), $f\equiv 0$ and with
$-\mu_{n}$ (resp. $-\mu$) on the right-hand side. If
$\|\mu_{n}-\mu\|_{TV} \rightarrow 0$ and
$\|\varphi_{n}-\varphi\|_{L^{1}} \rightarrow 0$ then $u_{n}
\rightarrow u$, $m_{1}$-a.e.
\end{lm}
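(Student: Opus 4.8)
The plan is to prove Lemma \ref{lm2.6} by reducing the convergence of renormalized solutions to the convergence of the associated backward stochastic differential equations, exploiting the probabilistic representation developed in Section \ref{sec3}. Since $f\equiv0$, the renormalized solution $u_n$ with data $(\varphi_n,-\mu_n)$ coincides, for q.e. $(s,x)$, with the first component of the solution of $\mbox{BSDE}_{s,x}(\varphi_n,D,-d\mu_n)$; this identification is exactly what Proposition \ref{stw3.5} provides, and the resulting stochastic representation is
\[
u_n(s,x)=E'_{s,x}\Big(\mathbf{1}_{\{\zeta>T-\tau(0)\}}\varphi_n(\mathbf{X}_{T-\tau(0)})+\int_0^{\zeta_\tau}dA^{-\mu_n}_r\Big),
\]
with the analogous formula for $u$. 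The heart of the argument is therefore to show that the right-hand sides converge $m_1$-a.e. (along a subsequence) under the hypotheses $\|\mu_n-\mu\|_{TV}\to0$ and $\|\varphi_n-\varphi\|_{L^1}\to0$.

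First I would reduce to the case of nonnegative data by the standard splitting $\mu_n=\mu_n^+-\mu_n^-$, $\varphi_n=\varphi_n^+-\varphi_n^-$, using linearity of the representation in the $f\equiv0$ case, so that it suffices to control positive potentials as in Proposition \ref{stw3.7}. For the terminal-data term, the elementary bound
\[
E'_{s,x}\mathbf{1}_{\{\zeta>T-\tau(0)\}}|\varphi_n-\varphi|(\mathbf{X}_{T-\tau(0)})\le\int_D p(s,x,T,y)|\varphi_n-\varphi|(y)\,dy\le c\,\|\varphi_n-\varphi\|_{L^1}
\]
already appearing in the proof of Proposition \ref{stw3.5} shows this contribution tends to zero q.e., hence $m_1$-a.e. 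For the measure term, I would use the Revuz-duality identity \eqref{eq2.06}, which gives
\[
E'_{s,x}\int_0^{\zeta_\tau}d|A^{\mu_n-\mu}|_r=\int_0^T\!\!\int_D p_D(s,x,r,y)\,d|\mu_n-\mu|(r,y),
\]
and then integrate against $dm_1(s,x)$ over $D_T$; since $\int_{D_T}p_D(s,x,r,y)\,dm_1(s,x)$ is bounded (the killed transition density integrates to at most one in the backward variable up to the length of the time interval), the $L^1(D_T;dm_1)$-norm of the potential difference is controlled by $\|\mu_n-\mu\|_{TV}\to0$. Convergence in $L^1(dm_1)$ then yields $m_1$-a.e.\ convergence along a subsequence, exactly the type of extraction already carried out in Lemma \ref{lm2.4}.

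The main obstacle I anticipate is the passage from q.e.\ (or $P'_{s,x}$-a.e.) convergence of the stochastic representations to genuine $m_1$-a.e.\ convergence of the functions $u_n$, together with the bookkeeping needed to combine the q.e.\ exceptional sets across all $n$ into a single $m_1$-null set. I would handle this by the diagonal/subsequence device used in Lemma \ref{lm2.4}: pass to $L^1(D_T,dm_1)$ estimates, where the exceptional-set issue disappears because $m_1$-null sets are absorbed, extract a subsequence converging $m_1$-a.e., and note that since the limit is the representation of the fixed solution $u$, every subsequence has a further subsequence converging to the same limit, which upgrades the convergence to the full sequence. A secondary point requiring care is that the decomposition-based definition of renormalized solution must be shown to match the probabilistic one; here I would invoke Lemma \ref{lm2.5}, which supplies decompositions $(g_n,G_n,f_n)$ of $\mu_n$ converging to a decomposition of $\mu$, ensuring the renormalized solutions $u_n$ are precisely those represented by the BSDEs, so that the probabilistic convergence just established is indeed convergence of the renormalized solutions.
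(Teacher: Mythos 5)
There is a genuine gap, and it is one of circularity. Your entire argument hinges on the very first step: identifying the analytically defined renormalized solutions $u_n$, $u$ with the probabilistic potentials $E'_{s,x}\bigl(\mathbf{1}_{\{\zeta>T-\tau(0)\}}\varphi_n(\mathbf{X}_{T-\tau(0)})+\int_0^{\zeta_\tau}dA^{\mu_n}_r\bigr)$. You assert that this identification ``is exactly what Proposition \ref{stw3.5} provides,'' but it is not: Proposition \ref{stw3.5} concerns only solutions of BSDE$_{s,x}$ and the function $u$ defined by the Feynman--Kac formula; it never asserts that a renormalized solution in the decomposition sense of Section \ref{sec4} coincides with that function. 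In the paper, the identification for general data $\varphi\in L^{1}(D)$, $\mu\in\MM_{0,b}(D_{T})$ is the content of Proposition \ref{stw2.3} (combined with uniqueness of renormalized solutions), and Proposition \ref{stw2.3} is proved \emph{after} Lemma \ref{lm2.6}: its proof first treats $\varphi\in L^{2}$, $\mu\in\MM^{+}_{0,b}\cap\mathcal{W}'(D_{T})$, approximates general data by $\varphi_n=\varphi\wedge n$ and $\mathbf{1}_{F_n}\cdot\mu$, and then invokes Lemma \ref{lm2.6} to conclude that the approximating renormalized solutions converge to the renormalized solution of the limit problem. So the identification you take as your starting point is exactly what Lemma \ref{lm2.6} is needed to establish, and your proof is circular within the paper's logical structure. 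The closing appeal to Lemma \ref{lm2.5} suffers from the same confusion: that lemma only produces decompositions $(g_n,G_n,f_n)$ of $\mu_n$ converging to a decomposition of $\mu$; it says nothing about matching renormalized solutions with BSDE representations. By contrast, the paper's own proof never leaves the analytic side: it combines Lemma \ref{lm2.5} with the a priori estimates for renormalized solutions of \cite{Petitta}, the equivalence of renormalized and entropy solutions from \cite{DP}, and uniqueness.

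Two smaller points. First, the quantitative part of your argument is essentially sound: the bound $\int_{D_T}\bigl(\int_0^T\!\!\int_D p_D(s,x,r,y)\,d|\mu_n-\mu|(r,y)\bigr)dm_1(s,x)\le C\|\mu_n-\mu\|_{TV}$ and the corresponding bound for the terminal data do give $L^{1}(D_T)$-convergence of the probabilistic potentials, so if the identification were available this part would work. Second, your final step is still flawed: extracting a.e.\ convergent subsequences and invoking the subsequence principle upgrades the convergence of the full sequence only to convergence in $m_1$-measure, not to $m_1$-a.e.\ convergence, since a.e.\ convergence is not a topological mode of convergence; the lemma as stated asserts a.e.\ convergence of the whole sequence, so this step would need repair as well.
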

\begin{dow}
By Lemma  \ref{lm2.5} we may assume that $\mu_n,\mu$ are given by
(\ref{eq4.7}) and (\ref{eq4.08}) is satisfied. But then the lemma
follows from  \cite[Proposition 4]{Petitta}.
\end{dow}

\begin{stw}
\label{stw2.3} Let $\varphi \in L^{1}(D)$, $\mu \in
\MM_{0,b}(D_{T})$ and let $v$ be defined by
\mbox{\rm(\ref{eq2.2})}. Then $v\in\mathcal{T}_{2}^{0,1}$, $v\in
L^{q}(0,T;W_{0}^{1,q}(D))$ for $q\in[1,\frac{d+2}{d+1})$ and $v$
is a renormalized solution of the problem
\begin{equation}
\label{eq2.12}
\left\{
\begin{array}{l}\frac{\partial v}{\partial t}+A_{t}v=-\mu,\smallskip\\
v(T,\cdot)=\varphi, \quad v(t,\cdot)_{|\partial D}=0,\quad t\in
(0,T).
\end{array}
\right.
\end{equation}
\end{stw}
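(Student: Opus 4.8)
The plan is to identify $v$ with the unique renormalized solution of (\ref{eq2.12}) through a two-step scheme: first treat data lying in the dual energy space $\WW'(D_{T})$, where $v$ coincides with a genuine weak solution, and then reach the general case by a total-variation approximation combined with the stability result of Lemma \ref{lm2.6}. The regularity assertions will fall out of the uniform truncation estimates already used in Proposition \ref{stw3.7}.

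First I would establish the case $\mu\in\WW'(D_{T})\cap\MM_{0,b}(D_{T})$ and $\varphi\in L^{2}(D)$. Here (\ref{eq2.12}) has a unique weak solution $w\in\WW(D_{T})$, and a weak solution automatically satisfies the conditions (a)--(d) defining a renormalized solution (with $f\equiv0$). The substantive point is the Feynman--Kac identification $v=w$. For $\mu=g\cdot m_{1}$ with $g\in L^{2}(D_{T})$ this is exactly Step~1 of the proof of Proposition \ref{stw3.7}, obtained from the quasi-continuous version of the extension $\tilde w$, its Fukushima-type decomposition, and Lemma \ref{lm1.1}. To cover a general dual-space datum $\mu=-(g)_{t}+\dyw(G)+f$ with $g\in L^{2}(0,T;H^{1}_{0}(D))$ and $G,f\in L^{2}(D_{T})$, I would first remove the time-derivative part by passing to $w-g$, which solves a problem of the same type with right-hand side $-\dyw(G+\frac12 a\nabla g)-f$ and \emph{no} $(g)_{t}$ term, and then apply the Step-1 argument (quasi-continuity from \cite{Kl:JOTP} together with the backward integral calculus of Lemma \ref{lm1.1}) to represent the divergence-form data. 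This yields $v=w$, so $v$ is a renormalized solution of (\ref{eq2.12}) for such data.

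Next I would pass to arbitrary $\mu\in\MM_{0,b}(D_{T})$ and $\varphi\in L^{1}(D)$. Using the decomposition (\ref{eq4.2}) $\mu=-(g)_{t}+\dyw(G)+f$, I keep $g,G$ fixed and choose $f_{n}\in L^{2}(D_{T})$ with $f_{n}\to f$ in $L^{1}(D_{T})$, together with $\varphi_{n}\in L^{2}(D)$, $\varphi_{n}\to\varphi$ in $L^{1}(D)$. Then $\mu_{n}:=-(g)_{t}+\dyw(G)+f_{n}\in\WW'(D_{T})\cap\MM_{0,b}(D_{T})$ and $\|\mu_{n}-\mu\|_{TV}=\|f_{n}-f\|_{L^{1}}\to0$, so by the previous step the associated potential $v_{n}$ is the renormalized solution with data $(\varphi_{n},\mu_{n})$. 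By Lemma \ref{lm2.6} these renormalized solutions converge $m_{1}$-a.e.\ to the renormalized solution $u$ with data $(\varphi,\mu)$. On the other hand, from the Revuz--Meyer formula (\ref{meyer}) one has $|v_{n}(s,x)-v(s,x)|\le E'_{s,x}\big(\mathbf{1}_{\{\zeta>T-\tau(0)\}}|\varphi_{n}-\varphi|(\bX_{T-\tau(0)})+\int_{0}^{\zeta_{\tau}}dA^{|\mu_{n}-\mu|}_{r}\big)$, and integrating in $(s,x)$ over $D_{T}$, together with Fubini and the sub-probabilistic bound for $p'_{D}$, gives $\|v_{n}-v\|_{L^{1}(D_{T})}\le C(\|\varphi_{n}-\varphi\|_{L^{1}}+\|\mu_{n}-\mu\|_{TV})\to0$; hence $v_{n}\to v$ in $L^{1}(D_{T})$ and, along a subsequence, $m_{1}$-a.e. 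Comparing the two limits yields $u=v$, so $v$ is a renormalized solution of (\ref{eq2.12}).

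It remains to record that $v\in\mathcal{T}_{2}^{0,1}\cap L^{q}(0,T;W^{1,q}_{0}(D))$ for $q\in[1,\frac{d+2}{d+1})$. I would obtain this as in Steps~2--3 of the proof of Proposition \ref{stw3.7}: the truncation estimate (\ref{eq2.3}), now derived for the signed potentials $v_{n}$ by the It\^o--Tanaka/occupation-time argument, gives through Lemma \ref{lm2.3} a bound $\int_{D_{T}}|\sigma\nabla T_{k}(v_{n})|^{2}\,dm_{1}\le Ck(\|\varphi\|_{L^{1}}+\|\mu\|_{TV})$ uniform in $n$; weak compactness in $L^{q}$ (as in \cite{Petitta}) then identifies the limiting gradient and places $v$ in the required spaces. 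The main obstacle is the base case of the first step: reconciling the probabilistically defined potential $v$, built from the additive functional $A^{\mu}$, with the analytic weak solution $w$ when $\mu$ carries a genuine distributional part $-(g)_{t}+\dyw(G)$. This is where the $w-g$ reduction and the backward stochastic calculus of Lemma \ref{lm1.1} are essential, and it is precisely what forces the approximation to proceed by dual-space data rather than by $L^{1}$-densities, since soft measures are not total-variation approximable by densities in general.
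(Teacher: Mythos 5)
Your second stage is fine, and it is a genuinely different (and in some ways cleaner) route than the paper's: you keep the dual part $-(g)_{t}+\dyw(G)$ of the decomposition (\ref{eq4.2}) fixed and approximate only the $L^{1}$ part, getting total-variation convergence for free, you control $\|v_{n}-v\|_{L^{1}(D_{T})}$ through (\ref{meyer}) and Aronson-type bounds, and you conclude with Lemma \ref{lm2.6}; the paper instead reduces to $\varphi,\mu\ge0$ and approximates $\mu$ by $\mathbf{1}_{F_{n}}\cdot\mu$ along a generalized nest $\{F_{n}\}$ (Oshima), so that $\mu_{n}\in\MM^{+}_{0,b}(D_{T})\cap\WW'(D_{T})$, $\|\mu_{n}-\mu\|_{TV}\to0$ and $v_{n}\to v$ q.e.\ by monotone convergence. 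The problem is your base case, and it is not a technicality. For $\mu\in\WW'(D_{T})\cap\MM_{0,b}(D_{T})$ with a genuine distributional part, the identification $v=w$ is never actually achieved by the tools you invoke. Lemma \ref{lm1.1} only says that two pairs $(f_{1},G_{1})$, $(f_{2},G_{2})$ representing the \emph{same} element of $L^{2}(0,T;H^{-1}(D))$ produce the same forward-plus-backward integral functional inside $D$; it trades one $f+\dyw(G)$ representation for another, but it cannot trade a divergence term for the Revuz functional $A^{\mu}$, which is the object appearing in the definition (\ref{eq2.2}) of $v$. Concretely, after your reduction to $z=w-g$, the representation coming from \cite{Kl:JOTP} and Lemma \ref{lm1.1} reads
\[
z(\bfX_{t})=(\mbox{terminal term})+\int_{t}^{\zeta_{\tau}}f(\bfX_{r})\,dr
+\int_{t}^{\zeta_{\tau}}a^{-1}\tilde{G}(\bfX_{r})\,d^{*}X_{r}
-\int_{t}^{\zeta_{\tau}}\sigma\nabla z(\bfX_{r})\,dB_{r},
\]
with $\tilde{G}=G+\frac12 a\nabla g$, and the backward integral does \emph{not} disappear when you take $E'_{s,x}$: by its definition it contains the drift $\int\tilde{G}\,d\alpha^{s,x}$ built from $p^{-1}\partial p/\partial y_{j}$, whose expectation is nonzero in general. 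Nothing in your argument relates this term (nor the function $g$ itself, for which no equation or stochastic representation is available) to $E'_{s,x}\int_{0}^{\zeta_{\tau}}dA_{r}^{\mu}$. What is missing is exactly the time-dependent analogue of Fukushima's theorem on potentials: that for a soft measure $\mu\in\WW'\cap\MM_{0,b}$ the finite-variation part of the decomposition of $w(\bfX)$ inside $D$ is $-A^{\mu}$. That statement is the crux of the proposition; it does not follow from Lemma \ref{lm1.1}.

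The paper's proof is organized precisely to avoid having to prove that link directly. It never represents distributional data: by the construction in the proof of Proposition \ref{stw3.7}, $v$ is the q.e.\ limit of $v_{n}$, where each $v_{n}$ is a weak solution of a problem whose right-hand side $n(v_{n}-v)^{-}$ is an $L^{2}$ \emph{density} (so Step 1 of Proposition \ref{stw3.7} applies to $v_{n}$, the Revuz functional being simply $\int n(v_{n}-v)^{-}(\bfX_{r})\,dr$), and the identification of $\lim_{n}v_{n}$ with the weak solution of (\ref{eq2.12}) for $\WW'$-data is then imported from the penalization convergence theory of \cite{MP,Pierre}; that citation is what carries the missing link. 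Note also that your TV-approximation scheme makes the gap harder to close than it looks: even when $\mu\ge0$, your approximating measures $\mu_{n}=-(g)_{t}+\dyw(G)+f_{n}$ are signed, so you need the base case for signed measures in $\WW'\cap\MM_{0,b}$, whereas the penalization argument (both in the paper and in \cite{MP,Pierre1}) is formulated for positive reaction measures, and the paper's reduction to positive data is performed \emph{before} the approximation. Either rebuild your base case along the paper's penalization route, or supply an independent proof of
\[
w(s,x)=E'_{s,x}\Big(\mathbf{1}_{\{\zeta>T-\tau(0)\}}\varphi(\bfX_{T-\tau(0)})
+\int_{0}^{\zeta_{\tau}}dA_{r}^{\mu}\Big)\quad\mbox{for }\mu\in\WW'(D_{T})\cap\MM_{0,b}(D_{T});
\]
without one of these the argument does not stand.
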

\begin{dow}
Without loss of generality we may assume that $\varphi \ge 0$ and
$\mu \ge 0$. Assume for a moment that $\varphi \in L^{2}(D_{T})$
and $\mu \in \MM_{0,b}^{+} \cap \mathcal{W}'(D_{T})$. From the
proof of Proposition \ref{stw3.7} it follows that $v$ is the q.e.
limit of $v_{n}$, where $v_{n}$ is a weak solution of
\[
\left\{
\begin{array}{l}\frac{\partial v_{n}}{\partial t}
+A_{t}v_{n}=-n(v_{n}-v)^{-}, \smallskip\\
v_{n}(T,\cdot)=\varphi, \quad v_{n}(t, \cdot)_{|\partial D}=0,
\quad t \in (0,T).
\end{array}
\right.
\]
But it is known (see, e.g., \cite[Theorem 1.1]{MP}) that
$\{v_{n}\}$ converges in $L^{2}(D_{T})$ to a unique weak solution
of (\ref{eq2.12}). Therefore $v$ is a weak solution of
(\ref{eq2.12}). Since a weak solution of (\ref{eq2.12}) is a
renormalized solution, this proves the proposition under the
additional assumptions on $\varphi,\mu$. Assume now that
$\varphi\in L^{1}(D)$ is nonnegative and
$\mu\in\MM^+_{0,b}(D_{T})$. By \cite[Theorem 5.6]{Oshima} there
exists a generalized nest, i.e. an ascending sequence $\{F_{n}\}$
of compact subsets of $D_{T}$ such that cap$(K\setminus F_{n})
\rightarrow 0$ for every compact $K \subset D_{T}$, with the
property that $\mathbf{1}_{F_{n}}\cdot\mu\in\MM^+_{0,b}(D_{T})\cap
\mathcal{W}'(D_{T})$ and $\mu(D_{T}\setminus\bigcup_{n}F_{n})=0$.
Let $\varphi_{n}=\varphi \wedge n$. By what has already been
proved, $v_n$ defined as
\begin{equation}
\label{eq.KF}
v_{n}(s,x)= E'_{s,x}\Big(\mathbf{1}_{\{\zeta>T-\tau(0)\}}
\varphi_{n}(\bfX_{T-\tau(0)}) +
\int_{0}^{\zeta_{\tau}}\mathbf{1}_{F_{n}}(\bfX_{r})dA_{r}^{\mu}\Big),
\quad(s,x)\in D_T
\end{equation}
is a renormalized solution of (\ref{eq2.12}) with $\varphi, \mu$
replaced by $\varphi_{n}$ and $\mu_{n}$, respectively. Since
$\|\varphi_{n}-\varphi\|_{L^{1}} \rightarrow 0$ and $\{F_{n}\}$ is
a generalized nest, we conclude from (\ref{eq.KF}) that
$v_{n}(s,x)\rightarrow v(s,x)$ for q.e. $(s,x)\in D_{T}$. This
completes the proof because by Lemma \ref{lm2.6}, $\{v_{n}\}$
converges to the renormalized solution of (\ref{eq2.12}).
\end{dow}

\begin{tw}
\label{tw3.1} Assume \mbox{\rm(H1)--(H5)}. Then there exists a
unique renormalized solution $u$ of \mbox{\rm(\ref{eq2.C})}.
Moreover, $u\in\FF\mathcal{D}$, $u\in L^{q}(0,T;W_{0}^{1,q}(D))$
for $q\in[1,\frac{d+2}{d+1})$ and
\begin{equation}
\label{eq4.8} u(s,x)=E'_{s,x}\Big(\mathbf{1}_{\{\zeta>T-\tau(0)\}}
\varphi(\bfX_{T-\tau(0)}) +
\int_{0}^{\zeta_{\tau}}f_{u}(\bfX_{r})\,dr
+\int_{0}^{\zeta_{\tau}}dA_{r}^{\mu}\Big)
\end{equation}
for q.e. $(s,x) \in D_{T}$. Finally, there exists $C>0$ depending
only on $\kappa,T$ such that
\begin{equation}
\label{eq4.9} \|f_u\|_{L^1(D_T)}\le C(\|\varphi\|_{L^1(D)}
+\|f(\cdot,\cdot,0)\|_{L^1(D_T)} +\|\mu\|_{TV}).
\end{equation}
\end{tw}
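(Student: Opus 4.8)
The plan is to read the solution and its Feynman--Kac representation (\ref{eq4.8}) off the BSDE theory, then to recognise $u$ as the parabolic potential of the measure $f_u\cdot m_1+\mu$ so that the renormalized theory of the \emph{linear} problem can be invoked. Since $f=f(t,x,y)$ does not depend on $z$, hypothesis (HZ) holds trivially (take $\gamma=0$, $\varrho=0$), so Proposition \ref{stw3.5} applies under (H1)--(H5) alone. It produces a function $u$ and a pair $(Y,Z)$ with $Y_t=u(\mathbf{X}_t)$ satisfying (\ref{eq4.8}), gives the class (D)/quasi-c\`adl\`ag property (hence $u\in\FF\mathcal{D}$), and, because $f$ is $z$-independent, furnishes the pointwise a priori bound (\ref{eq3.03}).

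The first genuine task is to upgrade the pointwise bound (\ref{eq3.03}) to the global estimate (\ref{eq4.9}); this is what yields $f_u\in L^1(D_T)$ rather than merely $f_u\in qL^1(D_T)$, and I expect it to be the main obstacle. I would rewrite the right-hand side of (\ref{eq3.03}) as $C\,E'_{s,x}\int_0^{\zeta_\tau}dA^\nu_r$ for the positive soft measure $\nu=\lambda_\varphi+|f(\cdot,\cdot,0)|\cdot m_1+|\mu|$, where $\lambda_\varphi=|\varphi|\,\delta_{\{T\}}\otimes m$ is the terminal-slice measure: it is precisely because soft measures are defined on $(0,T]\times D$ that $\lambda_\varphi$ is admissible, its potential $E'_{s,x}\int_0^{\zeta_\tau}dA^{\lambda_\varphi}_r$ equals the terminal term $\mathbf{1}_{\{\zeta>T-\tau(0)\}}|\varphi|(\mathbf{X}_{T-\tau(0)})$, and $\|\lambda_\varphi\|_{TV}=\|\varphi\|_{L^1(D)}$. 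Since the left-hand side of (\ref{eq3.03}) is the potential of $|f_u|\cdot m_1$, Lemma \ref{lm2.3} turns the q.e.\ domination of potentials into domination of total masses, giving $\|f_u\|_{L^1(D_T)}=\||f_u|\cdot m_1\|_{TV}\le C\|\nu\|_{TV}$, which is (\ref{eq4.9}). A safer alternative, should the terminal-slice measure be awkward to justify, is to approximate $(\varphi,\mu,f(\cdot,\cdot,0))$ by $L^2$/$\mathcal{W}'(D_T)$ data, use the classical weak-solution a priori estimate (with constant depending only on $\kappa,T$) and pass to the limit by Fatou.

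Once $f_u\in L^1(D_T)$ the measure $\tilde\mu:=f_u\cdot m_1+\mu$ lies in $\MM_{0,b}(D_T)$ (using Corollary \ref{wn.ql}), and (\ref{eq4.8}) says exactly that $u$ is the potential (\ref{eq2.2}) of $\tilde\mu$ with terminal datum $\varphi$. Proposition \ref{stw2.3} then delivers in one stroke that $u\in\mathcal{T}_2^{0,1}$, that $u\in L^q(0,T;W_0^{1,q}(D))$ for $q\in[1,\frac{d+2}{d+1})$, and that $u$ is a renormalized solution of $\frac{\partial u}{\partial t}+A_tu=-\tilde\mu$; since $f_u\in L^1$ may be absorbed into the $L^1$ part of the data, this is the same as being a renormalized solution of (\ref{eq2.C}). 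This settles existence together with all the regularity assertions of the theorem.

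For uniqueness let $\bar u$ be any renormalized solution of (\ref{eq2.C}). Then $f_{\bar u}\in L^1(D_T)$, so $\bar u$ is a renormalized solution of the linear problem (\ref{eq2.12}) with soft measure $f_{\bar u}\cdot m_1+\mu$ on the right-hand side. By uniqueness of renormalized solutions of the linear problem (already used in Lemma \ref{lm2.6}; see \cite{Petitta,DP}) and Proposition \ref{stw2.3}, which identifies that unique solution with the potential, $\bar u$ must coincide with the potential of $f_{\bar u}\cdot m_1+\mu$, so $\bar u$ satisfies (\ref{eq4.8}) with $\bar u$ in place of $u$. Reading off the martingale part, $(\bar u(\mathbf{X}),\sigma\nabla\bar u(\mathbf{X}))$ solves the Markov BSDE (\ref{eq2.1}) (here $f(\mathbf{X}_r,\bar u(\mathbf{X}_r))=f_{\bar u}(\mathbf{X}_r)$). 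Lemma \ref{lm3.1} guarantees (A1)--(A5) and (HZ) holds trivially, so this BSDE has a unique solution by \cite[Theorem 3.11]{Kl:BSM}; hence $u(\mathbf{X}_t)=\bar u(\mathbf{X}_t)$ for q.e.\ $(s,x)$, i.e.\ $u=\bar u$ q.e., completing the proof.
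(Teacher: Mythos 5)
Your proof is correct and follows essentially the same route as the paper's: Proposition \ref{stw3.5} (with (HZ) holding trivially since $f$ is $z$-free) yields existence and the representation (\ref{eq4.8}), the bound (\ref{eq3.03}) combined with Lemma \ref{lm2.3} yields (\ref{eq4.9}), Proposition \ref{stw2.3} gives the renormalized-solution property and the regularity assertions, and uniqueness is reduced to uniqueness of solutions of BSDE$_{s,x}(\varphi,D,f+d\mu)$. The only differences are expository: you spell out two points the paper leaves implicit, namely the terminal-slice measure $\delta_{\{T\}}\otimes|\varphi|\cdot m$ used to cast the right-hand side of (\ref{eq3.03}) as a potential before invoking Lemma \ref{lm2.3}, and the passage (via uniqueness of renormalized solutions of the linear problem and the potential identification of Proposition \ref{stw2.3}) from an arbitrary renormalized solution of (\ref{eq2.C}) to a solution of the BSDE.
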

\begin{dow}
By Lemma \ref{lm3.1} and \cite[Proposition 3.10]{Kl:BSM}, for q.e.
$(s,x)\in D_{T}$ there is a unique solution $(Y^{s,x}, Z^{s,x})$
of BSDE$_{s,x}(\varphi,D,f+d\mu)$. By Proposition \ref{stw3.5},
for q.e. $(s,x)\in D_T$ we have $Y^{s,x}_t=u(\bfX_t)$,
$P'_{s,x}$-a.s. for every $t\in[0,T-\tau(0)]$, where
$u:D_T\rightarrow\BR$ satisfies (\ref{eq4.8}). Furthermore, by
Lemma \ref{lm2.3} and (\ref{eq3.03}), $f_u$ satisfies
(\ref{eq4.9}). Hence $f_u\cdot m_1+\mu\in\MM_{0,b}(D_T)$ and from
Proposition \ref{stw2.3} it follows that $u$ is a renormalized
solution of (\ref{eq2.C}) and has the desired regularity
properties. The uniqueness part of the theorem follows from the
uniqueness of solutions of BSDE$_{s,x}(\varphi,D,f+d\mu)$.
\end{dow}

\begin{uw}
\label{uw4.6} Under the assumptions of Theorem \ref{tw3.1}, for
every $q\in[1,\frac{d+2}{d+1})$  there exist $C,\gamma>0$ such
that
\[
\|u\|_{L^q(0,T;W^{1,q}_0(D))}\le
C(\|\varphi\|_{L^1(D)}+\|f(\cdot,\cdot,0)\|_{L^1(D_T)}
+\|\mu\|_{TV})^{\gamma},
\]
because from Remark \ref{uw4.1}(ii) and results proved in
\cite[Section 3]{BDGO} (see also \cite{BG}) it follows that if $u$
is a solution of (\ref{eq2.C}) then
$\|u\|_{L^q(0,T;W^{1,q}_0(D))}\le c\|f_u\cdot m_1+
\mu\|_{TV}^{\gamma}$ for some $c,\gamma>0$.
\end{uw}

\nsubsection{Obstacle problem}
\label{sec5}

We begin with a probabilistic definition of a solution of the
obstacle problem.

\begin{df}
Assume \mbox{\rm(H1), (H4)} and let $h_{1}, h_{2}$ be measurable
functions on $D_{T}$ such that $h_{1}\le h_{2}$, $m_{1}$-a.e. We
say that a pair $(u, \nu)$ consisting of a measurable function $u:
D_{T} \rightarrow \BR$ and a measure $\nu$ on $D_{T}$ is a
solution of the obstacle problem with terminal condition
$\varphi$, right-hand side $f+d\mu$ and  obstacles $h_{1}, h_{2}$
(OP$(\varphi, f+d\mu,h_{1},h_{2})$ for short) if
\begin{enumerate}
\item[(a)] $f_{u}\in L^{1}(D_{T})$, $\nu \in \MM_{0,b}(D_{T})$,
$h_{1}\le u \le h_{2}$, $m_{1}$-a.e.,
\item[(b)]For q.e. $(s,x)\in D_{T}$,
\[
u(s,x)= E'_{s,x}\Big(\mathbf{1}_{\{\zeta>T-\tau(0)\}}
\varphi(\bfX_{T-\tau(0)})
+\int_{0}^{\zeta_{\tau}}f_{u}(\bfX_{r})\,dr
+\int_{0}^{\zeta_{\tau}}d(A_{r}^{\mu}+A_{r}^{\nu})\Big),
\]
\item[(c)]For every $h_{1}^{*}, h_{2}^{*} \in \FF\mathcal{D}$ such that
$h_{1} \le h_{1}^{*} \le u \le h_{2}^{*} \le h_{2}$, $m_{1}$-a.e.
we have
\[
\int_{0}^{\zeta_{\tau}}
(u_{-}(\bfX_{r})-h_{1-}^{*}(\bfX_{r}))\,dA_{r}^{\nu^{+}}
=\int_{0}^{\zeta_{\tau}}
(h_{2-}^{*}(\bfX_{r})-u_{-}(\bfX_{r}))\,dA_{r}^{\nu^{-}}=0, \quad
P'_{s,x}\mbox{\rm-a.s.}
\]
for q.e. $(s,x)\in D_{T}$, where
$g_{-}(\bfX_r)=\lim_{t<r,t\rightarrow r}g(\bfX_t)$ for $g:=
u,h^{*}_1,h^{*}_2$.
\end{enumerate}
\end{df}

We say that $(u,\nu)$ is a solution of the obstacle problem with
one lower (resp. upper) barrier $h$
($\underline{\mbox{O}}$P$(\varphi, f+d\mu, h)$ (resp.
$\overline{\mbox{O}}$P$(\varphi, f+d\mu, h)$) for short) if $(u,
\nu)$ satisfies the conditions of the above definition with
$h_{1}=h$, $h_{2}=+\infty$ (resp. $h_{1}=-\infty, h_{2}=h$) and
$\nu\in\MM_{0,b}^{+}(D_{T})$ (resp. $-\nu \in
\MM_{0,b}^{+}(D_{T})$).

\begin{uw}
Let us note that in view of Proposition \ref{stw2.3}, condition
(b) in the above definition says that $u$ is a renormalized
solution of (\ref{eq2.C}) with $\mu$ replaced by $\mu+\nu$.
Condition (c) provides a probabilistic formulation of minimality
of $\nu$. In \cite{Pierre1}  minimality of $\nu$ was expressed by
using the notion of precise versions of functions introduced in
\cite{Pierre}.  In fact, in the linear case with $L^{2}$ data,
condition (c) coincides with that introduced in \cite{Pierre1},
because for every parabolic potential $h$,
$\hat{h}(\mathbf{X}_{t})=h_{-}(\mathbf{X}_{t}),\, t\in
[0,\zeta_{\tau}]$, where $\hat{h}$ is the precise version of $h$
(see \cite{Kl:SPA} for details).
\end{uw}

As in the case of the Cauchy-Dirichlet problem considered in the
previous section, the proof of the existence of a solution of the
obstacle problem relies heavily on the results on BSDEs proved in
\cite{Kl:BSM}.

Suppose we are given a filtered probability space and $A,B,
\sigma,\xi$ as in Subsection \ref{sec3.1}. Moreover, suppose that
we are given two progressively measurable processes $U,L$ such
that $U\le L$ and $f:\Omega\times\BR_+\times\BR\rightarrow\BR$
such that $f(\cdot,y)$ is progressively measurable ($f$ does not
depend on $z$).

\begin{df}
A triple $(Y,Z,R)$ of progressively measurable processes is a
solution of the reflected backward stochastic differential
equation with terminal condition $\xi$, right-hand side $f+dA$ and
two reflecting barriers $L,U$ (RBSDE$(\xi,\sigma,f+dA,L,U)$ for
short) if
\begin{enumerate}
\item[(a)] $Z \in M$, $t\mapsto f(t,Y_{t},Z_{t})\in
L^{1}(0,\sigma)$, $P$-a.s.,
\item[(b)]$L_{t}\le Y_{t}\le U_{t}$ for a.e. $t\in [0,\sigma]$,
$P$-a.s.,
\item[(c)]$Y_{t}= \xi+\int_{t}^{\sigma}f(s,Y_{s},Z_{s})\,ds$
$+\int_{t}^{\sigma}dA_{s}-\int_{t}^{\sigma}dR_{s}
-\int_{t}^{\sigma}Z_{s}\,dB_{s}$, $0\le t\le\sigma$, $P$-a.s.,
\item[(d)]$R\in\mathcal{V}$ and for any $\hat{L},\check{U}\in\mathcal{D}$
such that $L_{t}\le\hat{L}_{t}\le Y_{t}\le\check{U}_{t}\le U_{t}$
for a.e. $t\in[0,\sigma]$,
$\int_{0}^{\sigma}(Y_{t-}-\hat{L}_{t-})\,dR_{t}^{+}
=\int_{0}^{\sigma}(\check{U}_{t-}-Y_{t})\,dR^{-}_{t}=0$, $P$-a.s.
\end{enumerate}
\end{df}

We say that $(Y,Z,R)$ is a solution of RBSDE with one lower (resp.
upper) barrier $L$ (resp. $U$), terminal condition $\xi$, the
right-hand side $f+dA$
($\underline{\mbox{R}}$BSDE$(\xi,\sigma,f+dA,L)$ (resp.
$\overline{\mbox{R}}$BSDE$(\xi,\sigma,f+dA,U)$) for short) if
$(Y,Z,R)$ satisfies the conditions of the above definition with
$U\equiv+\infty$ (resp. $L \equiv-\infty$) and
$R\in\mathcal{V}^{+}$ (resp. $-R\in\mathcal{V}^{+}$).

To shorten notation, in what follows for given $\mu,\varphi, f$
and $h_{1}, h_{2}$ we denote by RBSDE$_{s,x}(\varphi, D,f+d\mu,
h_{1},h_{2})$ the reflected BSDE with data
$\mathbf{1}_{\{\zeta>T-\tau(0)\}}\varphi(\bfX_{T-\tau(0)})$,
$\zeta_{\tau}$, $f(\bfX, \cdot, \cdot)+dA^{\mu}$,
$h_{1}(\bfX),h_{2}(\bfX)$ considered on the space
$(\Omega',\GG'_{\infty},\{\GG'_t\},P'_{s,x})$.

Let $\MM^q$, $q>0$, denote the space of continuous martingales
such that $E([M]_T)^{q/2}<+\infty$ for every $T>0$.

We will need the following general growth conditions for $f$.
\begin{enumerate}
\item[(A6)]There exists a semimartingale $\Gamma$ such that
$\Gamma$ is of class (D), $\Gamma\in\MM^{q}\oplus\mathcal{V}^{1}$
for $q \in (0,1)$, $L_{t}\le\Gamma_{t}$ for a.e. $t\in [0,\sigma]$
and $E\int_{0}^{\sigma}f^{-}(t,\Gamma_{t})\,dt <+\infty$,
\item[(A6$'$)]There exists a semimartingale $\Gamma$ such that
$\Gamma$ is of class (D), $\Gamma\in\MM^{q}\oplus\mathcal{V}^{1}$
for $q \in (0,1)$, $L_{t}\le\Gamma_{t}\le U_{t}$ for a.e. $t\in
[0,\sigma]$ and $E\int_{0}^{\sigma}|f(t,
\Gamma_{t})|\,dt<+\infty$.
\end{enumerate}

It is known  that under (A1)--(A6) (resp. (A1)--(A5), (A6$'$))
there exists a unique solution of BSDE$(\xi,\sigma,f+dV,L)$ (resp.
RBSDE $(\xi,\sigma,f+dA,L,U)$) (see  Theorems 5.7, 6.6 in
\cite{Kl:BSM}). By Lemma \ref{lm3.1}, assumptions (H1)--(H5) imply
that (A1)--(A5) hold under the measure $P'_{s,x}$ for q.e.
$(s,x)\in D_{T}$.

Analytic analogue of conditions (A6), (A6$'$) is as follows.

\begin{enumerate}
\item[(H6)]There exists a measurable function $v:D_T\rightarrow\BR$, a
measure $\lambda\in\MM_{0,b}(D_{T})$ and $\phi\in L^{1}(D)$, $\phi
\ge\varphi$, such that $v$ is a renormalized solution of the
problem
\begin{equation}
\label{eq3.1}
\left\{
\begin{array}{l}\frac{\partial v}{\partial t}+A_{t}v=-\lambda, \\
 v(T,\cdot)=\phi, \quad v(t, \cdot)_{|\partial D}=0, \quad t\in(0,T)
\end{array}
\right.
\end{equation}
and $f^{-}_{v}\in L^{1}(D_{T})$, $v\ge h_{1}$, $m_{1}$-a.e. on
$D_{T}$,
\item[(H6$'$)] There exists a measurable function $v:D_T\rightarrow\BR$,
a measure $\lambda\in\MM_{0,b}(D_{T})$ and $\phi\in L^{1}(D)$,
$\phi\ge\varphi$, such that $v$ is a renormalized solution of
(\ref{eq3.1}) and $f_{v}\in L^{1}(D_{T})$, $h_{1}\le v\le h_{2}$,
$m_{1}$-a.e. on $D_{T}$.
\end{enumerate}

\begin{lm}
\label{lm3.2} Let $L=h_1(\bfX), U=h_2(\bfX)$. If $v$ satisfies
\mbox{\rm(H6)} (resp. \mbox{\rm(H6$'$)}) then $\Gamma=v(\bfX)$
satisfies \mbox{\rm(A6)} (resp. \mbox{\rm(A6$'$)}) under the
measure $P'_{s,x}$ for q.e. $(s,x)\in D_{T}$.
\end{lm}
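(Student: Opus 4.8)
The plan is to read off the semimartingale $\Gamma=v(\bfX)$ from the Feynman--Kac representation of $v$ and then convert each of the three requirements in (H6) (resp.\ (H6$'$)) into its pathwise analogue in (A6) (resp.\ (A6$'$)). First I would note that, $v$ being a renormalized solution of (\ref{eq3.1}), Proposition \ref{stw2.3} together with the uniqueness of renormalized solutions (Theorem \ref{tw3.1} with $f\equiv0$) shows that $v$ agrees $m_1$-a.e.\ with the probabilistic solution defined by (\ref{eq2.2}), with $(\varphi,\mu)$ there replaced by $(\phi,\lambda)$; from now on I work with this quasi-c\`adl\`ag version and set $\Gamma_t=v(\bfX_t)$.

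For the structural part of (A6)/(A6$'$) I would split the data as $\phi=\phi^+-\phi^-$, $\lambda=\lambda^+-\lambda^-$, so that $v=v_1-v_2$, where $v_i$ is the probabilistic solution built from the nonnegative data $(\phi^{\pm},\lambda^{\pm})$. Proposition \ref{stw3.7} applies to each $v_i$: it is of class (FD), lies in $\FF\mathcal{D}^q$ with $\nabla v_i\in FM^q$ for $q\in(0,1)$, and $(v_i(\bfX),\sigma\nabla v_i(\bfX))$ solves the associated BSDE. Subtracting the two equations yields, for $t\in[0,\zeta_{\tau}]$,
\[
\Gamma_t=\Gamma_0-A^{\lambda}_t+\int_0^t\sigma\nabla v(\bfX_r)\,dB_r .
\]
Here the martingale part belongs to $\MM^q$ since $\sigma\nabla v(\bfX)=\sigma\nabla v_1(\bfX)-\sigma\nabla v_2(\bfX)\in M^q$, while $A^{\lambda}=A^{\lambda^+}-A^{\lambda^-}\in\VV^1$ because $\lambda$ is bounded and, by Lemma \ref{lm.qe}, $E'_{s,x}\int_0^{\zeta_{\tau}}d|A^{\lambda}|_r<+\infty$ for q.e.\ $(s,x)$; hence $\Gamma\in\MM^q\oplus\VV^1$. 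Moreover $\Gamma=v_1(\bfX)-v_2(\bfX)$ is of class (D), being the difference of two class-(D) processes, as the stopped values are dominated by a sum of two uniformly integrable families.

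The barrier condition is obtained by pushing the $m_1$-a.e.\ inequalities of (H6) (resp.\ (H6$'$)) onto the trajectories of $\bfX$. The key fact is that the occupation measure of $\bfX$ on $[0,\zeta_{\tau}]$ has density $p_D(s,x,\cdot,\cdot)$ with respect to $m_1$, so any Borel set of $m_1$-measure zero is visited only on a $dt$-null set, $P'_{s,x}$-a.s., for q.e.\ $(s,x)$. Applying this to $\{v<h_1\}$ (resp.\ to $\{v<h_1\}\cup\{v>h_2\}$) gives $L_r=h_1(\bfX_r)\le\Gamma_r$ (resp.\ $L_r\le\Gamma_r\le U_r=h_2(\bfX_r)$) for a.e.\ $r\in[0,\zeta_{\tau}]$, $P'_{s,x}$-a.s., for q.e.\ $(s,x)$.

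Finally, for integrability I note that $f^-(r,\Gamma_r)=f_v^-(\bfX_r)$ (resp.\ $|f(r,\Gamma_r)|=|f_v|(\bfX_r)$), where $f_v(t,x)=f(t,x,v(t,x))$. By (H6) (resp.\ (H6$'$)) the function $f_v^-$ (resp.\ $|f_v|$) lies in $L^1(D_T)$, so $f_v^-\cdot m_1$ (resp.\ $|f_v|\cdot m_1$) is a bounded nonnegative measure absolutely continuous with respect to $m_1$, and therefore belongs to $\MM^+_{0,b}(D_T)$, with associated additive functional $\int_0^t f_v^-(\bfX_r)\,dr$ (resp.\ $\int_0^t|f_v|(\bfX_r)\,dr$). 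Lemma \ref{lm.qe} then gives $E'_{s,x}\int_0^{\zeta_{\tau}}f_v^-(\bfX_r)\,dr<+\infty$ (resp.\ the same with $|f_v|$) for q.e.\ $(s,x)$, which is precisely the integrability demanded in (A6) (resp.\ (A6$'$)). I expect the structural step to be the main obstacle: producing the $\MM^q\oplus\VV^1$ decomposition and the class-(D) property for signed data is what forces both the reduction to Proposition \ref{stw3.7} through the positive/negative splitting and the care in identifying $v$ with its probabilistic version.
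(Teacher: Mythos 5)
Your proposal is correct and takes essentially the same route as the paper: the paper's entire proof is the citation ``Follows immediately from Propositions \ref{stw3.7} and \ref{stw2.3}'', and your argument is exactly the detailed expansion of that citation --- Proposition \ref{stw2.3} plus uniqueness of renormalized solutions to identify $v$ with the probabilistic version (\ref{eq2.2}), then Proposition \ref{stw3.7} (applied after the positive/negative splitting of $(\phi,\lambda)$) for the class (D) and $\MM^{q}\oplus\mathcal{V}^{1}$ structure, with Lemma \ref{lm.qe} and the occupation-density argument supplying the integrability and barrier conditions.
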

\begin{dow}
Follows immediately from Propositions \ref{stw3.7} and
\ref{stw2.3}.
\end{dow}
\medskip

We first prove the comparison and uniqueness results for solutions
of the obstacle problem.

\begin{stw}
\label{stw3.1} Let  $(u_{i},\nu_{i})$ be a solution of \mbox{\rm
OP}$(\varphi_{i},f^{i}+d\mu_{i},h_{1}^{i}, h_{2}^{i})$, $i=1,2$.
If $\varphi_{1}\le\varphi_{2}$, $m$-a.e., $d\mu_{1}\le d\mu_{2}$,
$h_{1}^{1}\le h_{1}^{2}$, $h_{2}^{1}\le h_{2}^{2}$, $m_{1}$-a.e.
and either
\[
f^{2} \mbox{ satisfies \mbox{\rm(H3)} and }
\mathbf{1}_{\{u_{1}>u_{2}\}}(f_{u_{1}}^{1}-f_{u_{2}}^{2})\le0,
\quad m_{1}\mbox{-a.e.}
\]
or
\[
f^{1} \mbox{ satisfies \mbox{\rm(H3)} and }
\mathbf{1}_{\{u_{1}>u_{2}\}}(f_{u_{2}}^{1}-f_{u_{2}}^{2})\le0,
\quad m_{1}\mbox{-a.e.}
\]
then $u_{1} \le u_{2}$ q.e.
\end{stw}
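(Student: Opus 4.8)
The plan is to push the statement down to the level of the backward equations satisfied by the two solutions and to run a Meyer--It\^o comparison argument on the positive part of their difference. First I would fix, using Lemma~\ref{lm2.1}, a properly exceptional set $N\subset D_T$ outside of which the Feynman--Kac representation (b) \emph{and} the minimality condition (c) hold simultaneously for both $(u_1,\nu_1)$ and $(u_2,\nu_2)$, and work under $P'_{s,x}$ for a fixed $(s,x)\in N^c$. Writing $Y^i_r=u_i(\bfX_r)$ and (by Proposition~\ref{stw2.3} and the framework of Proposition~\ref{stw3.5}) $Z^i=\sigma\nabla u_i(\bfX)$, condition (b) says that
\[
Y^i_t=\xi^i+\int_t^{\zeta_{\tau}}f^i(\bfX_r,Y^i_r)\,dr+\int_t^{\zeta_{\tau}}d(A^{\mu_i}_r+A^{\nu_i}_r)-\int_t^{\zeta_{\tau}}Z^i_r\,dB_r,
\]
with $\xi^i=\mathbf 1_{\{\zeta>T-\tau(0)\}}\varphi_i(\bfX_{T-\tau(0)})$. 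Set $\hat Y=Y^1-Y^2$ and $\hat Z=Z^1-Z^2$, and note that $\hat Y_{\zeta_{\tau}}=\xi^1-\xi^2\le 0$ since $\varphi_1\le\varphi_2$.

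The core computation is the Meyer--It\^o (Tanaka) formula applied to $(\hat Y)^+$ on $[t,\zeta_{\tau}]$. Because $x\mapsto x^+$ is convex, both the local-time term and the jump-correction term enter with the favourable sign, and the Brownian term is removed by localization together with the class (D) property of the $Y^i$ (as in Proposition~\ref{stw3.5}). After taking $E'_{s,x}$ this leaves
\[
E'_{s,x}(\hat Y_t)^+\le E'_{s,x}\int_t^{\zeta_{\tau}}\mathbf 1_{\{\hat Y_{r-}>0\}}\,d\hat K_r,
\]
where $d\hat K_r$ collects the drift $(f^1_{u_1}-f^2_{u_2})(\bfX_r)\,dr$, the measure part $d(A^{\mu_1}_r-A^{\mu_2}_r)$ and the reaction part $d(A^{\nu_1}_r-A^{\nu_2}_r)$. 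The measure part is harmless: since $d\mu_1\le d\mu_2$, the difference $A^{\mu_1}-A^{\mu_2}=A^{\mu_1-\mu_2}$ is nonincreasing, so its contribution is $\le 0$. For the drift I would split $f^1_{u_1}-f^2_{u_2}$ into one difference evaluated at a common argument, which is $\le 0$ on $\{u^1>u^2\}$ by the sign condition, and one difference of the monotone nonlinearity, which by (H3) is $\le\kappa(u_1-u_2)$ there; this gives $\mathbf 1_{\{\hat Y_r>0\}}(f^1_{u_1}-f^2_{u_2})(\bfX_r)\le\kappa^+(\hat Y_r)^+$.

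The decisive step, which I expect to be the main obstacle, is to show that the reaction part is nonpositive, i.e. $\int_t^{\zeta_{\tau}}\mathbf 1_{\{\hat Y_{r-}>0\}}\,d(A^{\nu_1}_r-A^{\nu_2}_r)\le 0$. Writing $\nu_i=\nu_i^+-\nu_i^-$, the terms $-A^{\nu_1^-}$ and $-A^{\nu_2^+}$ are nonincreasing and contribute $\le 0$ automatically, so everything reduces to killing the two ``bad'' terms $A^{\nu_1^+}$ and $A^{\nu_2^-}$. Here the point is to feed the minimality condition (c) with the auxiliary barriers $u_1\wedge u_2$ and $u_1\vee u_2$, which lie in $\FF\mathcal D$ precisely because $u_1,u_2$ do (the raw barriers $h_i^j$ are only measurable, which is why one cannot use them directly). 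For solution $1$ the pair $(u_1\wedge u_2,u_1)$ is admissible in (c): the hypotheses $h_1^1\le h_1^2\le u_2$ and $h_1^1\le u_1\le h_2^1$ give $h_1^1\le u_1\wedge u_2\le u_1\le h_2^1$; since $(u_1\wedge u_2)_-=u_{1-}\wedge u_{2-}$, the first identity in (c) reads $\int_0^{\zeta_{\tau}}(\hat Y_{r-})^+\,dA^{\nu_1^+}_r=0$. Symmetrically, the pair $(u_2,u_1\vee u_2)$ is admissible for solution $2$ (using $u_1\le h_2^1\le h_2^2$), and yields $\int_0^{\zeta_{\tau}}(\hat Y_{r-})^+\,dA^{\nu_2^-}_r=0$. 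Consequently $\mathbf 1_{\{\hat Y_{r-}>0\}}$ integrates to zero against both $dA^{\nu_1^+}$ and $dA^{\nu_2^-}$, and the reaction contribution is indeed $\le 0$.

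Combining the three estimates gives $E'_{s,x}(\hat Y_t)^+\le\kappa^+\int_t^{\zeta_{\tau}}E'_{s,x}(\hat Y_r)^+\,dr$, and a backward Gronwall argument forces $E'_{s,x}(\hat Y_t)^+=0$ for every $t$, hence $Y^1\le Y^2$ $P'_{s,x}$-a.s.; taking $t=0$ gives $u_1(s,x)\le u_2(s,x)$ for all $(s,x)\in N^c$, that is $u_1\le u_2$ q.e. The two alternative hypotheses on $(f^1,f^2)$ are interchangeable: in each case one only rearranges the drift splitting so that the monotone nonlinearity is the one tested by (H3) while the sign condition disposes of the remaining cross term, so the same bound $\le\kappa^+(\hat Y)^+$ results.
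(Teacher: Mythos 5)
Your argument is correct, but it takes a genuinely different route from the paper's. The paper proves Proposition \ref{stw3.1} in two lines: by the definition of a solution of the obstacle problem together with Proposition \ref{stw3.7}, the triple $(u_i(\bfX),\sigma\nabla u_i(\bfX),A^{\nu_i})$ is, for q.e.\ $(s,x)\in D_T$, a solution of RBSDE$_{s,x}(\varphi_i,D,f^i+d\mu_i,h_1^i,h_2^i)$, and the conclusion is then read off from the comparison theorem for reflected BSDEs with two irregular barriers, \cite[Corollary 6.2]{Kl:BSM}, used as a black box. You make the same identification (note that the proposition doing this work is \ref{stw3.7}, rather than \ref{stw2.3} and \ref{stw3.5} which you cite) and then essentially reconstruct the proof of the cited corollary: Tanaka's formula for $(\hat Y)^+$ with $\hat Y=u_1(\bfX)-u_2(\bfX)$, the drift controlled by (H3) together with the sign hypothesis, the measure part controlled by $d\mu_1\le d\mu_2$, and --- the decisive step --- the minimality condition (c) tested with the auxiliary barriers $(u_1\wedge u_2,\,u_1)$ for $(u_1,\nu_1)$ and $(u_2,\,u_1\vee u_2)$ for $(u_2,\nu_2)$, which gives $\int_0^{\zeta_{\tau}}(\hat Y_{r-})^+\,dA^{\nu_1^+}_r=\int_0^{\zeta_{\tau}}(\hat Y_{r-})^+\,dA^{\nu_2^-}_r=0$ and so kills exactly the two reaction terms of unfavourable sign, the other two being nonpositive automatically; Gronwall finishes. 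The paper's route buys brevity and consistency with its overall scheme of delegating all purely stochastic statements to \cite{Kl:BSM}; yours buys a self-contained argument that makes transparent why condition (c) is stated for arbitrary test barriers in $\FF\mathcal{D}$: precisely so that it can be fed with barriers manufactured from the solutions themselves, which is what renders the condition usable when $h_1^i,h_2^i$ are merely measurable. Two small remarks: under the first alternative as literally stated, the sign condition already bounds the full difference $f^{1}_{u_1}-f^{2}_{u_2}$ on $\{u_1>u_2\}$, so no splitting of the drift is needed in that case; and the appeal to Lemma \ref{lm2.1} is superfluous --- it suffices to discard the union of the countably many capacity-zero sets outside which condition (b) holds, condition (c) holds for your two pairs of test barriers, and $u_1,u_2\in\FF\mathcal{D}$ (the last fact itself following from (b) via Propositions \ref{stw2.3} and \ref{stw3.7}).
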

\begin{dow}
The desired result follows from \cite[Corollary 6.2]{Kl:BSM},
because by the definition of a solution of the obstacle problem
and Proposition \ref{stw3.7},  for q.e. $(s,x)\in D_{T}$ the
triple $(u_{i}(\bfX),\sigma\nabla u_{i}(\bfX), A^{\nu_{i}})$ is a
solution of RBSDE$_{s,x}(\varphi_{i}, D, f^{i}+d\mu_{i},
h_{1}^{i}, h_{2}^{i})$.
\end{dow}

\begin{wn}
Let assumption  \mbox{\rm(H3)} hold. Then there exists at most one
solution of \mbox{\rm OP}$(\varphi,f+d\mu,h_{1},h_{2})$.
\end{wn}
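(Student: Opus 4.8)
The plan is to obtain uniqueness as a direct consequence of the comparison result, Proposition \ref{stw3.1}, applied twice. Suppose $(u_1,\nu_1)$ and $(u_2,\nu_2)$ are two solutions of \mbox{\rm OP}$(\varphi,f+d\mu,h_1,h_2)$. Because both carry exactly the same data $\varphi,f,\mu,h_1,h_2$, all the monotonicity hypotheses of Proposition \ref{stw3.1}---namely $\varphi_1\le\varphi_2$, $d\mu_1\le d\mu_2$, $h_1^1\le h_1^2$ and $h_2^1\le h_2^2$---hold with equality, and hence are satisfied in whichever order we label the two solutions.

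The point to be careful about is which of the two alternative sign conditions in Proposition \ref{stw3.1} to invoke. The first alternative would demand $\mathbf{1}_{\{u_1>u_2\}}(f_{u_1}-f_{u_2})\le0$, which does \emph{not} follow from the monotonicity assumption (H3) unless $\kappa\le0$. I would therefore use the second alternative, which requires only that $f$ satisfy (H3)---true by hypothesis---together with $\mathbf{1}_{\{u^1>u^2\}}(f_{u_2}^1-f_{u_2}^2)\le0$. Since here $f^1=f^2=f$, the difference $f_{u_2}^1-f_{u_2}^2$ is identically zero, so this condition is automatic. Applying Proposition \ref{stw3.1} with $(u_1,\nu_1)$ as the first solution and $(u_2,\nu_2)$ as the second yields $u_1\le u_2$ q.e.; interchanging the two solutions and repeating the argument gives $u_2\le u_1$ q.e. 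Thus $u_1=u_2$ q.e., and I denote the common function by $u$.

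It then remains to show $\nu_1=\nu_2$. By the remark following the definition of a solution, condition (b) means exactly that $u$ is a renormalized solution of (\ref{eq2.C}) with $\mu$ replaced by $\mu+\nu_i$, for $i=1,2$. By Remark \ref{uw4.1}(ii) each such renormalized solution is a distributional solution, so the identity (\ref{eq4.6}) holds for every $\eta\in C^\infty_0((0,T]\times D)$ with the measure $\mu+\nu_1$ and, separately, with $\mu+\nu_2$ on the right-hand side. Subtracting the two identities leaves $\int_{D_T}\eta\,d(\nu_1-\nu_2)=0$ for all such $\eta$, whence $\nu_1=\nu_2$. This establishes that the solution $(u,\nu)$ is unique.

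The main obstacle here is conceptual rather than computational: recognizing that the monotonicity condition (H3) alone is not enough to run the naive comparison via the first alternative of Proposition \ref{stw3.1}, and that one must instead route the argument through the second alternative, where the sign condition collapses to $0\le0$ precisely because both solutions carry the same nonlinearity $f$. Once this is observed, the rest is a symmetric double application of comparison together with the fact that the measure data of a renormalized solution is determined by the solution.
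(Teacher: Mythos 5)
Your proof is correct and takes essentially the same route as the paper: the paper's entire proof reads ``Follows immediately from Proposition \ref{stw3.1}'', i.e.\ precisely the symmetric double application of the comparison result that you spell out, including the key observation that it is the second sign alternative that applies (reducing to $0\le 0$ since $f^{1}=f^{2}=f$). Your additional argument that $\nu_{1}=\nu_{2}$, obtained by subtracting the distributional identities (\ref{eq4.6}) for the common function $u$, correctly fills in a detail the paper leaves implicit.
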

\begin{dow}
Follows immediately from Proposition \ref{stw3.1}.
\end{dow}
\medskip

Before proving our main result on existence and approximation by
the penalization method of solutions of the obstacle problem with
one barrier let us recall  that a function $v$ on $D_{T}$ is a
supersolution of PDE$(\varphi, f+d\mu)$ if there exists a measure
$\lambda\in\MM_{0,b}^{+}(D_{T})$ such that $v$ is a renormalized
solution of the problem
\[
\left\{
\begin{array}{l}\frac{\partial v}{\partial t}+A_{t}v
=-f_{v}-\mu-\lambda, \smallskip\\
v(T,\cdot)=\varphi, \quad v(t,\cdot)_{|\partial D}=0, \quad t\in
(0,T).
\end{array}
\right.
\]

\begin{tw}
\label{tw5.5} Assume \mbox{\rm(H1)--(H6)}.
\begin{enumerate}
\item[\rm(i)]There exists a solution $(u,\nu)$ of
\mbox{\rm OP}$(\varphi,f+d\mu,h_{1})$ such that $u$ is of class
\mbox{\rm(FD)}, $u\in \FF\mathcal{D}^{q}$ for $q\in(0,1)$, $\nabla
u\in FM^{q}$ for $q\in(0,1)$, $u\in\mathcal{T}_{2}^{0,1}$, $u\in
L^{q}(0,T;W_{0}^{1,q}(D))$ for $q\in[1,\frac{d+2}{d+1})$.
\item[\rm(ii)]For $n\in\BN$ let $u_{n}$ be
a renormalized solution of the problem
\begin{equation}
\label{eq3.3}
\left\{
\begin{array}{l}\frac{\partial u_{n}}{\partial t}+A_{t}u_{n}
=-f_{u_{n}}-n(u_{n}-h_{1})^{-}-\mu, \smallskip\\
u_{n}(T, \cdot)=\varphi, \quad u_{n}(t,\cdot)_{|\partial D}=0,
\quad t\in(0, T).
\end{array}
\right.
\end{equation}
Then $u_{n}\nearrow u$ q.e. on $D_{T}$, $u_{n}\rightarrow u$ in
$L^q(0,T;W^{1,q}_0(D))$ for any $q\in[1,\frac{d+2}{d+1})$ and
$\nu_{n}\rightarrow\nu$ weakly, where
$d\nu_{n}=n(u_{n}-h_{1})^{-}dm_{1}$.
\end{enumerate}
\end{tw}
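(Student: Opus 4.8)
The plan is to recast the one-barrier obstacle problem as a one-barrier reflected BSDE on the time-homogeneous space $(\Omega',\GG'_\infty,\{\GG'_t\},P'_{s,x})$ and to transport the existence, Markovian representation and penalization results available there. First I would take $L=h_1(\bfX)$, $U\equiv+\infty$, and note that by Lemma \ref{lm3.1} the data $\mathbf{1}_{\{\zeta>T-\tau(0)\}}\varphi(\bfX_{T-\tau(0)})$, $\zeta_\tau$, $f(\bfX,\cdot)+dA^\mu$ satisfy (A1)--(A5) under $P'_{s,x}$ for q.e. $(s,x)\in D_T$, while Lemma \ref{lm3.2} turns (H6) into $\Gamma=v(\bfX)$ satisfying (A6). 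Hence \cite[Theorem 5.7]{Kl:BSM} yields, for q.e. $(s,x)$, a unique solution $(Y^{s,x},Z^{s,x},R^{s,x})$ of $\dR$BSDE$_{s,x}(\varphi,D,f+d\mu,h_1)$ with $R^{s,x}\in\VV^+$; the penalization part of the same theorem will later drive part (ii).

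Next I would promote this q.e.-indexed family to genuinely Markovian objects, following the scheme of Proposition \ref{stw3.5}: using the shift invariance of Proposition \ref{stw2.1}, the properly exceptional set of Lemma \ref{lm2.1}, and the measurable-selection lemmas of \cite{Fukushima}, I expect to produce a function $u$ on $D_T$, a measurable $\psi$, and a continuous additive functional $A^\nu$ of $\BX'$ such that for q.e. $(s,x)$ one has $Y^{s,x}_t=u(\bfX_t)$, $Z^{s,x}=\psi(\bfX)$ and $R^{s,x}=A^\nu$, the last identification exploiting that $R^{s,x}$ is additive and increasing (so \cite[Theorem 66.2]{Sharpe} and the Revuz correspondence give a positive soft measure $\nu\in\MM^+_{0,b}(D_T)$). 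Condition (b) of the definition then holds by construction, $h_1\le u$ a.e. by the barrier constraint, and the obstacle minimality condition (c) transfers directly from condition (d) of the reflected BSDE with $\hat L=h_1^*(\bfX)$. Since (b) exhibits $u$ as the potential of $f_u\cdot m_1+\mu+\nu$, Propositions \ref{stw3.7} and \ref{stw2.3} supply all the asserted regularity: $u$ of class (FD), $u\in\FF\mathcal{D}^q$ and $\nabla u\in FM^q$ for $q\in(0,1)$, $u\in\mathcal{T}_2^{0,1}$ and $u\in L^q(0,T;W_0^{1,q}(D))$ for $q\in[1,\frac{d+2}{d+1})$, with $\psi=\sigma\nabla u$.

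For part (ii), the penalized problem (\ref{eq3.3}) corresponds, via Theorem \ref{tw3.1} and Proposition \ref{stw3.5}, to the BSDE with generator $f+n(\cdot-L)^-$, whose Markovian solution satisfies $Y^n_t=u_n(\bfX_t)$, $R^n_t=\int_0^t n(Y^n_r-L_r)^-\,dr$ and $d\nu_n=n(u_n-h_1)^-\,dm_1$. The penalization statement of \cite[Theorem 5.7]{Kl:BSM} gives $Y^n\nearrow Y$ and $Z^n\to Z$, hence $u_n\nearrow u$ q.e., together with the convergence of $R^n$ to $R$ that yields weak convergence $\nu_n\to\nu$. The $L^q(0,T;W_0^{1,q}(D))$ convergence I would obtain as in Steps 2--3 of Proposition \ref{stw3.7}: the uniform gradient estimates of the form (\ref{eq2.8})--(\ref{eq2.9}), the total-variation bound $\|\nu_n\|_{TV}\le\|\nu\|_{TV}$ coming from Lemma \ref{lm2.3}, and the stability and uniqueness results of \cite{Petitta,DP} together give relative compactness and identification of the limit gradient.

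The main obstacle I anticipate lies in the second paragraph: passing from the separately defined solutions $(Y^{s,x},Z^{s,x},R^{s,x})$ to honestly Markovian objects, and in particular realizing the reflecting process $R$ as an additive functional whose Revuz measure $\nu$ is soft and bounded, so that condition (c) is even meaningful. Care is needed to check that the minimality property (d), stated for arbitrary $\hat L\in\mathcal{D}$ trapped between the barrier and $Y$, yields condition (c) for \emph{every} competitor $h_1^*\in\FF\mathcal{D}$; this is precisely where the precise-version and quasi-c\`adl\`ag subtleties flagged in the introduction enter, although here the one-sided structure ($\nu\ge0$, $h_2\equiv+\infty$) makes the argument substantially lighter than in the two-barrier case.
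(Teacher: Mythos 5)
Your overall strategy coincides with the paper's: reduce to the one-barrier reflected BSDE $\dR$BSDE$_{s,x}(\varphi,D,f+d\mu,h_1)$ via Lemmas \ref{lm3.1}, \ref{lm3.2} and \cite[Theorem 5.7]{Kl:BSM}, identify the reflection process with an additive functional $A^{\nu}$ through the Revuz correspondence, and run the penalization scheme for part (ii). But there is a genuine gap exactly at the point you flag as the ``main obstacle'' and then leave unresolved: nothing in your argument shows that $f_u\in L^1(D_T)$ or that the smooth measure $\nu$ produced by the Revuz correspondence is \emph{bounded}. The correspondence by itself only yields a positive smooth measure; $\nu\in\MM^+_{0,b}(D_T)$ is not automatic, and without it condition (a) of the definition of OP fails and Propositions \ref{stw3.7} and \ref{stw2.3} --- which you invoke for all the regularity assertions --- cannot be applied, since they need $f_u\cdot m_1+\mu+\nu\in\MM_{0,b}(D_T)$. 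The paper closes this gap by making an essential \emph{second} use of (H6): arguing as at the beginning of the proof of \cite[Theorem 5.7]{Kl:BSM} and using Theorem \ref{tw3.1}, it produces a supersolution $\bar v$ of PDE$(\varphi\vee\phi,f+d\mu)$ with $u_n\le\bar v$ q.e. for all $n$; then (H3) (i.e.\ the map $y\mapsto f(t,x,y)-\kappa y$ is nonincreasing) sandwiches $f_u-\kappa u$ between $f_{\bar v}-\kappa\bar v$ and $f_{u_1}-\kappa u_1$, giving $f_u\in L^1(D_T)$, and the inequality $u\le\bar v$ turns the Feynman--Kac formula (b) into a bound of $E'_{s,x}\int_0^{\zeta_\tau}dA^{\nu}_r$ by the potential of a fixed bounded measure, whence $\|\nu\|_{TV}<+\infty$ by Lemma \ref{lm2.3}. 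In your write-up (H6) is consumed once, to get (A6) for existence, and never reappears, so boundedness of $\nu$ (and hence the meaningfulness of the whole solution concept) is unproved.

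Two further points. First, your plan to obtain the Markovian representation by applying the scheme of Proposition \ref{stw3.5} \emph{directly to the reflected equation} would require a shift-invariance result of the type of Proposition \ref{stw2.1} for reflected BSDEs, which the paper does not have; the paper avoids this by defining $u=\limsup_n u_n$ and $\psi$ as the limit of $\sigma\nabla u_n$ through the \emph{penalized, non-reflected} equations (to each of which Proposition \ref{stw3.7} applies), and only afterwards identifying $R^{s,x}$ with the AF $A$ built from the equation itself. Second, in part (ii) your inequality $\|\nu_n\|_{TV}\le\|\nu\|_{TV}$ is not what Lemma \ref{lm2.3} yields: the potentials of $\nu_n$ and $\nu$ also differ by the terms involving $f_{u_n}$ and $f_u$, which do not compare pointwise under (H3); what the paper actually proves is $\sup_n\|\nu_n\|_{TV}<+\infty$ by dominating $E'_{s,x}\int_0^{\zeta_\tau}dA^{\nu_n}_r$ by a fixed potential. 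Moreover, the weak convergence $\nu_n\rightarrow\nu$ is not deduced from convergence of $R^n$ to $R$ (which by itself gives no weak convergence of Revuz measures) but by identifying every weak subsequential limit through the distributional formulation of Remark \ref{uw4.1}, using $f_{u_n}\rightarrow f_u$ in $L^1(D_T)$ (dominated convergence via the (H3) bounds) together with $u_n\rightarrow u$ and $\nabla u_n\rightarrow\nabla u$ in $L^1(D_T)$ (convergence in measure plus Remark \ref{uw4.6} and Vitali's theorem).
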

\begin{dow}
(i) By Lemmas \ref{lm3.1}, \ref{lm3.2} and \cite[Theorem
5.7]{Kl:BSM}, for q.e. $(s,x)\in D_{T}$ there exists a unique
solution $(Y^{s,x},Z^{s,x},R^{s,x})$ of
$\underline{\mbox{R}}$BSDE$_{s,x}(\varphi,D,f+d\mu, h_{1})$ such
that $Y^{s,x}$ is of class (D),
$(Y^{s,x},Z^{s,x})\in\mathcal{D}^{q}\otimes M^{q}$ for $q\in(0,1)$
and $R^{s,x} \in \mathcal{V}^{+,1}$. By \cite[Theorem
5.7]{Kl:BSM}, for q.e. $(s,x)\in D_{T}$,
\begin{equation}
\label{eq3.4} Y_{t}^{n, s, x} \nearrow Y_{t}^{s,x}, \quad
t\in[0,\zeta_{\tau}],\,P'_{s,x}\mbox{-a.s.},
\end{equation}
\begin{equation}
\label{eq3.5} Z^{n,s,x} \rightarrow Z^{s,x}, \quad dt\otimes
P'_{s,x}\mbox{-a.e. on }[0,\zeta_{\tau}]\times\Omega',
\end{equation}
where $(Y^{n,s,x},Z^{n,s,x})$ is a solution of the equation
\begin{align*}
Y_{t}^{n,s,x}=&\mathbf{1}_{\{\zeta>T-\tau(0)\}}\varphi(\bfX_{T-\tau(0)})
+\int_{t}^{\zeta_{\tau}}(f(r,\bfX_{r},Y_{r}^{n,s,x})
+n(Y_{r}^{n,s,x}-h_{1}(\bfX_{r}))^{-})\,dr\\
& +\int_{t}^{\zeta_{\tau}}dA_{r}^{\mu}
-\int_{t}^{\zeta_{\tau}}Z_{r}^{n,s,x}\,dB_{r},\quad t \in
[0,\zeta_{\tau}],\quad P'_{s,x}\mbox{-a.s.}
\end{align*}
By Proposition \ref{stw3.7}, $u_{n}(\bfX_{t})=Y_{t}^{n,s,x}$,
$t\in[0,\zeta_{\tau}]$, $Z^{n,s,x}=\sigma\nabla u_{n}(\bfX)$,
$dt\otimes P'_{s,x}$-a.e. on $[0,\zeta_{\tau}]\times\Omega'$ for
q.e. $(s,x)\in D_{T}$. For $(s,x)\in D_{T}$ set
$u(s,x)=\limsup_{n\rightarrow +\infty}u_{n}(s,x)$. Then by
(\ref{eq3.4}), for q.e. $(s,x)\in D_{T}$,
\begin{equation}
\label{eq5.5} u(\bfX_{t})=Y_{t}^{s,x}, \quad t\in[0,\zeta_{\tau}],
\,P'_{s,x}\mbox{-a.s.}
\end{equation}
Moreover, by (\ref{eq3.5}) and Lemma \ref{lm2.4} there exists a
Borel measurable function $\psi$ on $D_{T}$ such that for q.e.
$(s,x)\in D_{T}$,
\begin{equation}
\label{eq5.6} \psi(\bfX)=Z^{s,x}, \quad dt\otimes
P'_{s,x}\mbox{-a.e. on } [0,\zeta_{\tau}]\times\Omega'.
\end{equation}
Set
\[
A_{t}=u(\bfX_{0})-u(\bfX_{t})-\int_{0}^{t}f_{u}(\bfX_{r})\,dr
-\int^t_0dA^{\mu}_r +\int_{0}^{t}\psi(\bfX_{r})\,dB_{r}, \quad
t\in[0, \zeta_{\tau}].
\]
By (\ref{eq5.5}) and (\ref{eq5.6}),
\begin{equation}
\label{eq5.07} A_{t}=R_{t}^{s,x}, \quad t\in[0,\zeta_{\tau}],\,
P'_{s,x}\mbox{-a.s.}
\end{equation}
for q.e. $(s,x)\in D_{T}$. It is an elementary check that $A$ is
an AF of $\BX'$. In fact, by (\ref{eq5.07}), it is a positive
functional. Therefore by Proposition in Section II.1 of
\cite{Revuz} and \cite[Theorem 5.6]{Oshima} there exists a
positive smooth measure $\nu$ on $D_{T}$ such that $A=A^{\nu}$.
From this, the definition of $A$ and (\ref{eq5.5}) it follows that
\begin{equation}
\label{eq5.7} u(s,x)=E'_{s,x}\Big(\mathbf{1}_{\{\zeta>T-\tau(0)\}}
\varphi(\bX_{T-\tau(0)})
+\int_{0}^{\zeta_{\tau}}f_{u}(\bfX_{r})\,dr
+\int_{0}^{\zeta_{\tau}}d(A_{r}^{\mu}+A^{\nu}_r)\Big)
\end{equation}
for q.e. $(s,x)\in D_T$. Let $v$ denote the function from
condition (H6). By Lemma \ref{lm3.2}, $v(\bfX)$ satisfies (A6)
(with $L=h_1(\bfX)$) under $P'_{s,x}$ for q.e. $(s,x)\in D_T$.
Therefore arguing as  at the beginning of the proof of
\cite[Theorem 5.7]{Kl:BSM} and using Theorem \ref{tw3.1} one can
show that there exists a supersolution $\bar v$ of
PDE$(\varphi\vee\phi,f+d\mu)$ such that $u_n\le\bar v$ q.e. on
$D_T$ for $n\in\BN$. From this, the fact that $u_{1}\le u$ q.e. on
$D_{T}$ and  (H3) it follows that $f_{\bar v}-\kappa \bar v\le
f_{u}-\kappa u\le f_{u_{1}}-u_1$, $m_{1}$-a.e. Hence $f_{u}\in
L^{1}(D_{T})$, because $u_1,\bar v,f_{u_1},f_{\bar v}\in
L^{1}(D_{T})$. Since $u \le\bar v$ q.e. on $D_{T}$, it follows
from (\ref{eq5.7}) that
\begin{align*}
E'_{s,x}\int_{0}^{\zeta_{\tau}}dA_{r}^{\nu} &\le
2E'_{s,x}|\varphi(\bfX_{T-\tau(0)})| +
E'_{s,x}|\phi(\bfX_{T-\tau(0)})|\\
&\quad+E'_{s,x}\int_{0}^{\zeta_{\tau}}(|f_{u}(\bfX_{r})| +|f_{\bar
v}(\bfX_{r})|)\,dr
+2E'_{s,x}\int_{0}^{\zeta_{\tau}}d(|A^{\mu}|_r+|A^{\lambda}|_{r})
\end{align*}
for q.e. $(s,x)\in D_{T}$. Hence, by Lemma \ref{lm2.3},
$\nu\in\MM_{0,b}^{+}(D_{T})$. From (\ref{eq5.7}) and Proposition
\ref{stw2.3} it follows now that $u$ is a renormalized solution of
the problem (\ref{eq2.C}) with $\mu$ replaced by $\mu+\nu$,
$u\in\mathcal{T}_{2}^{0,1}$ and $u\in L^{q}(0,T;W_{0}^{1,q}(D))$
for $q\in[1,\frac{d+2}{d+1})$. Actually, from the definition of a
solution of reflected BSDE and (\ref{eq5.5}), (\ref{eq5.07}) it
follows that the pair $(u,\nu)$ is a solution of
OP$(\varphi,f+d\mu,h_1)$. Moreover, from (\ref{eq5.5}) and the
fact that for q.e. $(s,x)\in D_T$ the process $Y^{s,x}$ is of
class (D) and $Y^{s,x}\in\mathcal{D}^{q}$ for $q\in(0,1)$ we
conclude that $u$ is of class \mbox{\rm(FD)} and
$u\in\FF\mathcal{D}^{q}$ for $q\in(0,1)$. Furthermore, since
$(u(\bfX),\psi(\bfX))$ is a solution of
BSDE${}_{s,x}(\varphi,D,f_u+d(\mu+\nu))$, it follows from
Proposition \ref{stw3.7} that for q.e. $(s,x)\in D_{T}$,
\begin{equation}
\label{eq5.08} \sigma\nabla u(\bfX)=\psi(\bfX), \quad dt\otimes
P'_{s,x}\mbox{-a.e. on } [0,\zeta_{\tau}]\times\Omega'.
\end{equation}
Hence $\nabla u\in FM^q$ for $q\in(0,1)$, because $Z^{s,x}\in
M^q$, $q\in(0,1)$, for q.e. $(s,x)\in D_T$.
\smallskip\\
(ii) From (\ref{eq3.4}) and (\ref{eq5.5}) it follows that
$u_n\nearrow u$ q.e. on $D_T$, whereas from (\ref{eq3.5}),
(\ref{eq5.6}), (\ref{eq5.08}) and Lemma \ref{lm2.4} it follows
that $\nabla u_n\rightarrow\nabla u$ in measure $m_1$. Since
$u_{n}\le u$ q.e. on $D_{T}$, we have
\begin{align*}
E'_{s,x}\int_{0}^{\zeta_{\tau}}dA_{r}^{\nu_{n}} &\le C(\kappa,T)\Big(
E'_{s,x}|\varphi(\bfX_{T-
\tau(0)})|\\&\quad+E'_{s,x}\int_{0}^{\zeta_{\tau}}|f_{u}(\bfX_{r})|\,dr
+E'_{s,x}\int_{0}^{\zeta_{\tau}}d(|A^{\mu}|_r+|A^{\lambda}|_{r})\Big).
\end{align*}
Hence $\sup_{n\ge1}\|\nu_{n}\|_{TV}<+\infty$ by Lemma \ref{lm2.3},
and consequently, by Remark \ref{uw4.6} and Vitali's theorem,
$u_n\rightarrow u$ in $L^q(0,T;W^{1,q}_0(D))$ for every
$q\in[1,\frac{d+2}{d+1})$. Suppose that for some subsequence,
still denoted by $n$, $\{\nu_n\}$ converges weakly on $D_{T}$ to
some measure $\nu'$. By Remark \ref{uw4.1}, $u_n$ is a
distributional solution of (\ref{eq3.3}) and $u$ is a solution of
(\ref{eq2.C}) with $\mu$ replaced by $\mu+\nu$, i.e for any
$\eta\in C^{\infty}_0(D_T)$,
\[
\int_{D_T}(u_n\frac{\partial\eta}{\partial t}+a\nabla
u_n\cdot\nabla\eta)\,dm_1 =\int_D\varphi\eta(T,\cdot)\,dm
+\int_{D_T}f_{u_n}\eta\,dm_1 +\int_{D_T}\eta\,d(\mu+\nu_n)
\]
and (\ref{eq4.6}) is satisfied with $\mu$ replaced by $\mu+\nu$.
Since $f_{u_n}\rightarrow f_u$, $m_1$-a.e. and by (H3),
$f_v+\kappa(u_1-v)\le f_{u_n}\le f_{u_1}+\kappa(u-u_1)$, applying
the Lebesgue dominated convergence theorem we conclude that
$f_{u_n}\rightarrow f_u$ in $L^1(D_T)$. Since we also know that
$u_n\rightarrow u$ and $\nabla u_n\rightarrow\nabla u$ in
$L^1(D_T)$, it follows that
$\int_{D_T}\eta\,d\nu=\int_{D_T}\eta\,d\nu'$, and hence that
$\nu=\nu'$. Thus $\nu_{n}\rightarrow\nu$ weakly on $D_{T}$, and
the proof is complete.
\end{dow}

\begin{wn}
Assume \mbox{\rm(H1)--(H4)}. Let $(u,\mu)$ be a solution of
\mbox{\rm OP}$(\varphi,f+d\mu,h_{1})$. Then
\[
u=\mbox{\rm quasi-essinf}\{v\ge h_{1},\,m_{1}\mbox{-a.e.}:\,\,
v\mbox{ is a supersolution of \,\rm{PDE}}(\varphi, f+d\mu)\}
\]
q.e. on $D_{T}$.
\end{wn}
\begin{dow}
Follows from Theorem \ref{tw5.5}, Proposition \ref{stw3.7},
Theorem \ref{tw3.1}  and \cite[Lemma 4.9]{Kl:BSM}.
\end{dow}

\begin{wn}
Let $(u,\nu)$ be a solution of
\mbox{\rm OP}$(\varphi, f+d\mu,h_{1})$ and let $h^{*}\in
\FF\mathcal{D}$ be such that $h\le h^{*}\le u$, $m_{1}$-a.e. Then
for q.e. $(s,x)\in D_{T}$,
\begin{align*}
u(s,x)&=\sup_{\sigma\in\mathcal{T}'}E'_{s,x}
\Big(\int_{0}^{\sigma\wedge \zeta_{\tau}}
f_{u}(\bfX_{r})\,dr+\int_{0}^{\sigma\wedge\zeta_{\tau}}dA_{r}^{\mu}\\
&\quad+h^{*}(\bfX_{\sigma})\mathbf{1}_{\{\sigma<\zeta_{\tau}\}}
\mathbf{1}_{\{\sigma<T-\tau(0)\}}
+\varphi(\bfX_{T-\tau(0)})\mathbf{1}_{\{\sigma=T-\tau(0)\}}\Big),
\end{align*}
where $\mathcal{T}'$ denotes the set of all $\{\GG'_t\}$-stopping
times.
\end{wn}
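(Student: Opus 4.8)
The plan is to read off the optimal stopping representation from the theory of one-sided reflected BSDEs in \cite{Kl:BSM}. By Theorem~\ref{tw5.5}(i) together with the definition of a solution of the obstacle problem and Proposition~\ref{stw3.7} (exactly as in the proof of Proposition~\ref{stw3.1}), for q.e. $(s,x)\in D_T$ the triple $(Y,Z,R):=(u(\bfX),\sigma\nabla u(\bfX),A^{\nu})$ is a solution of RBSDE$_{s,x}(\varphi,D,f+d\mu,h_1)$ under $P'_{s,x}$, with $\nu\in\MM^+_{0,b}(D_T)$. Since $\bfX_0=(s,x)$ we have $Y_0=u(s,x)$, so it suffices to represent $Y_0$ as the value of an optimal stopping problem.

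The decisive preliminary step is to show that the intermediate obstacle $h^*$ may replace $h_1$ as the reflecting barrier. As $h^*\in\FF\mathcal{D}$ and $h_1\le h^*\le u$, $m_1$-a.e., the process $h^*(\bfX)$ is c\`adl\`ag under $P'_{s,x}$ for q.e. $(s,x)$ and satisfies $h^*(\bfX_t)\le u(\bfX_t)=Y_t$ on $[0,\zeta_{\tau}]$, so the barrier constraint of the RBSDE definition holds for $h^*(\bfX)$. Because we are in the one-sided case, $\nu=\nu^+$ and $\nu^-=0$, and hence condition (c) of the obstacle problem applied with $h_1^*=h^*$ (the $\nu^-$-integral being void) reduces to
\[
\int_{0}^{\zeta_{\tau}}(u_-(\bfX_r)-h^*_-(\bfX_r))\,dA^{\nu}_r=0,\quad P'_{s,x}\mbox{-a.s.}
\]
for q.e. $(s,x)\in D_T$. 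This is precisely the Skorokhod minimality condition for the reflected equation with lower barrier $h^*(\bfX)$, so $(Y,Z,R)$ is also a solution of RBSDE$_{s,x}(\varphi,D,f+d\mu,h^*)$.

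Next I would apply the optimal stopping representation of the first component of a one-sided reflected BSDE (see \cite{Kl:BSM}) to this equation, which yields, for q.e. $(s,x)\in D_T$,
\[
u(s,x)=Y_0=\sup_{\sigma\in\mathcal{T}'}E'_{s,x}\Big(\int_0^{\sigma\wedge\zeta_{\tau}}f_u(\bfX_r)\,dr+\int_0^{\sigma\wedge\zeta_{\tau}}dA^\mu_r+h^*(\bfX_\sigma)\mathbf{1}_{\{\sigma<\zeta_{\tau}\}}+\xi\mathbf{1}_{\{\sigma\ge\zeta_{\tau}\}}\Big),
\]
where $\xi=\mathbf{1}_{\{\zeta>T-\tau(0)\}}\varphi(\bfX_{T-\tau(0)})$ is the terminal datum. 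It then remains to identify the last two terms with those in the statement. On $\{\sigma<\zeta_{\tau}\}$ one has $\sigma<T-\tau(0)$, so the barrier term equals $h^*(\bfX_\sigma)\mathbf{1}_{\{\sigma<\zeta_{\tau}\}}\mathbf{1}_{\{\sigma<T-\tau(0)\}}$. For the terminal term, restricting the supremum to $\sigma\le\zeta_{\tau}$ (which does not change its value), $\xi$ vanishes on $\{\zeta\le T-\tau(0)\}$, consistently with the Dirichlet condition $u|_{\partial D}=0$, while on $\{\zeta>T-\tau(0)\}$ one has $\zeta_{\tau}=T-\tau(0)$, so $\{\sigma=\zeta_{\tau}\}=\{\sigma=T-\tau(0)\}$ and the contribution is $\varphi(\bfX_{T-\tau(0)})\mathbf{1}_{\{\sigma=T-\tau(0)\}}$. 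This gives exactly the asserted formula.

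I expect the main obstacle to lie in the second step, the transfer of the reflecting barrier from $h_1$ to the intermediate obstacle $h^*$: this is where the probabilistic minimality condition (c) of the obstacle problem is indispensable, and where one must check that $h^*(\bfX)$ is an admissible c\`adl\`ag barrier dominated by $Y$. The final matching of the indicator functions for the boundary and terminal contributions is then routine bookkeeping based on the structure $\zeta_{\tau}=\zeta\wedge(T-\tau(0))$.
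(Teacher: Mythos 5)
Your proposal is correct and follows essentially the same route as the paper: the paper's proof simply cites Theorem \ref{tw5.5} (which identifies $(u(\bfX),\sigma\nabla u(\bfX),A^{\nu})$ as a solution of RBSDE$_{s,x}(\varphi,D,f+d\mu,h_{1})$ for q.e. $(s,x)\in D_T$) together with the optimal stopping representation for one-barrier reflected BSDEs, \cite[Lemma 4.9]{Kl:BSM}. Your intermediate step transferring the reflecting barrier from $h_{1}$ to $h^{*}$ via the minimality condition (c), and the final indicator bookkeeping, are exactly what that citation packages, so your argument is an expanded version of the paper's proof rather than a different one.
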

\begin{dow}
Follows from Theorem \ref{tw5.5} and \cite[Lemma 4.9]{Kl:BSM}.
\end{dow}
\medskip

We now turn to the obstacle problem with two barriers.
\begin{tw}
\label{tw5.8} Assume \mbox{\rm(H1)--(H5), (H6$'$)}.
\begin{enumerate}
\item[\rm(i)]There exists a solution $(u,\nu)$ of \mbox{\rm OP}$(\varphi,
f+d\mu,h_{1},h_{2})$ such that $u$ is of class \mbox{\rm(FD)},
$u\in\FF\mathcal{D}^{q}$ for $q\in(0,1)$, $u\in
\mathcal{T}_{2}^{0,1}$, $L^{q}(0,T,W_{0}^{1,q}(D))$ for
$q\in[1,\frac{d+2}{d+1})$ and $\nabla u\in FM^{q}$ for
$q\in(0,1)$.
\item[\rm(ii)]If $A^{\mu}$ is continuous, $h_{1},h_{2}$ are quasi-continuous
and $h_{1}(T,\cdot)\le\varphi\le h_{2}(T,\cdot)$, $m$-a.e. then
$u$ is quasi-continuous.
\item[\rm(iii)] Let $u_{n}$ be a renormalized solution of the problem
\[
\left\{
\begin{array}{l}\frac{\partial u_{n}}{\partial t}+A_{t}u_{n}
=-f_{u_{n}}-\mu-n(u_{n}-h_{1})^{-}+n(u_{n}-h_{2})^{-}, \smallskip\\
u_{n}(T, \cdot)=\varphi, \quad u_{n}(t, \cdot)_{|\partial D}=0,
\quad t\in(0,T).
\end{array}
\right.
\]
Then $u_{n}\rightarrow u$ q.e. on $D_{T}$ and  $u_{n}\rightarrow
u$ in $L^q(0,T;W^{1,q}_0(D))$ for $q\in[1,\frac{d+2}{d+1})$.
\item[\rm(iv)]Let $(\underline{u}_{n},\underline{\beta}_{n})$ be a solution
of $\underline{\mbox{\rm O}}\mbox{\rm P}
(\varphi,\underline{f}_{n}+d\mu,h_{1})$ with
\[
\underline{f}_{n}(t,x,y)=f(t,x,y)-n(y-h_{2}(t,x))^{+}.
\]
Then $\underline{u}_{n}\searrow u$ q.e. on $D_{T}$,
$\underline{u}_{n}\rightarrow u$ in $L^q(0,T;W^{1,q}_0(D))$ for
$q\in[1,\frac{d+2}{d+1})$,
$\underline{\beta}_{n}\ge\underline{\beta}_{n+1}$,
$\underline{\beta}_{n} \searrow \nu^{+}$ setwise,
$\underline{\gamma}_{n}\rightarrow \nu^{-}$ weakly on $D_{T}$,
where $\underline{\gamma}_{n}=n(\underline{u}_{n}-h_{2})^{+}\cdot
m_{1}$.
\end{enumerate}
\end{tw}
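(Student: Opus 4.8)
The plan is to follow closely the proof of Theorem \ref{tw5.5}, replacing the singly reflected BSDE by the doubly reflected one, and then to read off parts (ii)--(iv) from the corresponding results on penalization of reflected BSDEs in \cite{Kl:BSM}. For part (i), I would first invoke Lemmas \ref{lm3.1} and \ref{lm3.2} to see that (H1)--(H5) and (H6$'$) guarantee that (A1)--(A5) and (A6$'$) hold for the data $\mathbf{1}_{\{\zeta>T-\tau(0)\}}\varphi(\bfX_{T-\tau(0)})$, $\zeta_{\tau}$, $f(\bfX,\cdot)+dA^{\mu}$, $h_{1}(\bfX)$, $h_{2}(\bfX)$ under $P'_{s,x}$ for q.e. $(s,x)\in D_{T}$. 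By the existence theorem for doubly reflected BSDEs (Theorem 6.6 in \cite{Kl:BSM}) there is then, for q.e. $(s,x)$, a unique solution $(Y^{s,x},Z^{s,x},R^{s,x})$ of RBSDE$_{s,x}(\varphi,D,f+d\mu,h_{1},h_{2})$ with $Y^{s,x}$ of class (D), $(Y^{s,x},Z^{s,x})\in\mathcal{D}^{q}\otimes M^{q}$ for $q\in(0,1)$ and $R^{s,x}\in\mathcal{V}^{1}$. Exactly as in Theorem \ref{tw5.5}, using the flow property of Proposition \ref{stw2.1} together with Lemma \ref{lm2.1}, I would produce Borel functions $u,\psi$ on $D_{T}$ with $Y^{s,x}_{t}=u(\bfX_{t})$ and $Z^{s,x}=\psi(\bfX)$, and define the additive functional $A$ by the Feynman--Kac defect
\[
A_{t}=u(\bfX_{0})-u(\bfX_{t})-\int_{0}^{t}f_{u}(\bfX_{r})\,dr-\int_{0}^{t}dA^{\mu}_{r}+\int_{0}^{t}\psi(\bfX_{r})\,dB_{r},
\]
so that $A=R^{s,x}$ for q.e. $(s,x)$.

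The decisive new point, and the main obstacle, is that here $R^{s,x}$ is no longer monotone: it is merely of finite variation. I would decompose $A=A^{+}-A^{-}$ into its increasing and decreasing parts, each a positive AF, and let $\nu^{+},\nu^{-}$ be the associated Revuz measures, $\nu=\nu^{+}-\nu^{-}$. The delicate task is to bound both parts at once so that $\nu\in\MM_{0,b}(D_{T})$. Taking the function $v$ of (H6$'$) as a two-sided comparison barrier (it satisfies (A6$'$) by Lemma \ref{lm3.2}), I would argue as in Theorem \ref{tw5.5} to obtain a supersolution above $u$ and a subsolution below $u$, deduce $f_{u}\in L^{1}(D_{T})$ from (H3), and bound the push-up part $E'_{s,x}\int_{0}^{\zeta_{\tau}}dA^{+}_{r}$ by comparison with the one-barrier situation; the Feynman--Kac representation (the analogue of (\ref{eq5.7}) with $\nu=\nu^{+}-\nu^{-}$) then controls the defect $A^{-}=A^{+}-A$, and Lemma \ref{lm2.3} converts these potential bounds into $\|\nu^{+}\|_{TV}+\|\nu^{-}\|_{TV}<+\infty$. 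Once $\nu\in\MM_{0,b}(D_{T})$, Proposition \ref{stw2.3} shows that $u$ is a renormalized solution of (\ref{eq2.C}) with $\mu$ replaced by $\mu+\nu$ and yields the membership in $\mathcal{T}_{2}^{0,1}$ and $L^{q}(0,T;W_{0}^{1,q}(D))$; the minimality condition (d) of the RBSDE, read through $Y=u(\bfX)$, $L=h_{1}(\bfX)$, $U=h_{2}(\bfX)$, $R^{\pm}=A^{\nu^{\pm}}$, is precisely condition (c) of OP$(\varphi,f+d\mu,h_{1},h_{2})$. The class-(FD), $\FF\mathcal{D}^{q}$ and $\nabla u\in FM^{q}$ properties follow as before from Proposition \ref{stw3.7} and the integrability of $(Y^{s,x},Z^{s,x})$.

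For part (ii), when $h_{1},h_{2}$ are quasi-continuous the barrier processes $t\mapsto h_{i}(\bfX_{t})$ are continuous, and the compatibility $h_{1}(T,\cdot)\le\varphi\le h_{2}(T,\cdot)$ places the terminal value between them; by the regularity of doubly reflected BSDEs with continuous barriers in \cite{Kl:BSM}, $Y^{s,x}=u(\bfX)$ is then continuous for q.e. $(s,x)$, whence $u$ is quasi-continuous. For part (iii) I would define $u_{n}$ as the renormalized solutions of the penalized problems, identify them via Propositions \ref{stw3.7} and \ref{stw2.3} with the Markov versions of the penalized doubly reflected BSDEs, and quote the penalization convergence of \cite{Kl:BSM} to get $Y^{n,s,x}\to Y^{s,x}$ and $Z^{n,s,x}\to Z^{s,x}$; transferring these through Lemmas \ref{lm2.1} and \ref{lm2.4} gives $u_{n}\to u$ q.e., while the uniform total-variation bound on the reaction measures (Lemma \ref{lm2.3}), Remark \ref{uw4.6} and Vitali's theorem upgrade this to convergence in $L^{q}(0,T;W_{0}^{1,q}(D))$, exactly as in Theorem \ref{tw5.5}(ii).

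Finally, for part (iv), I would first check that Theorem \ref{tw5.5} applies to the driver $\underline{f}_{n}=f-n(y-h_{2})^{+}$: it inherits (H1)--(H5) (for (H5) one uses $h_{2}^{-}\le|v|\in qL^{1}(D_{T})$ by Corollary \ref{wn.ql}), and (H6) holds with the same $v$, since $v\le h_{2}$ forces $(\underline{f}_{n})_{v}=f_{v}$. Thus each $(\underline{u}_{n},\underline{\beta}_{n})$ exists with the regularity of Theorem \ref{tw5.5}. Proposition \ref{stw3.1} then gives $\underline{u}_{n+1}\le\underline{u}_{n}$ and $\underline{u}_{n}\ge u$, so $\underline{u}_{n}\searrow\tilde u\ge u$ q.e.; identifying $\tilde u$ with the solution of the two-barrier problem---using that the extra penalty forces $\tilde u\le h_{2}$ in the limit and generates the negative part $\nu^{-}$---yields $\underline{u}_{n}\searrow u$ and, via Remark \ref{uw4.6} and Vitali, convergence in $L^{q}(0,T;W_{0}^{1,q}(D))$. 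The monotone setwise convergence $\underline{\beta}_{n}\searrow\nu^{+}$ and the weak convergence $\underline{\gamma}_{n}\to\nu^{-}$, with $\underline{\gamma}_{n}=n(\underline{u}_{n}-h_{2})^{+}\cdot m_{1}$, are then the measure-convergence statements of the nested penalization in \cite{Kl:BSM}, transported to the PDE side as in part (iii).
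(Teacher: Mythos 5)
Your overall architecture does match the paper's: reduce to the doubly reflected BSDE of \cite[Theorem 6.6]{Kl:BSM}, transfer to the PDE side through the Markov/Revuz machinery, bound the reaction measure by comparison and Lemma \ref{lm2.3}, and read off (ii)--(iv) from the penalization results. But there are two concrete gaps in how you propose to execute this. First, the construction of the Borel representatives $u,\psi$: you claim to obtain them ``exactly as in Theorem \ref{tw5.5}, using the flow property of Proposition \ref{stw2.1} together with Lemma \ref{lm2.1}'', applied to the doubly reflected equation. Proposition \ref{stw2.1} is proved only for non-reflected BSDEs (its proof rests on uniqueness of solutions of the unconstrained equation), and Theorem \ref{tw5.5} never applies it to a reflected equation either: there $u$ is obtained as the monotone limit of the functions $u_n$ coming from the \emph{penalized} problems, which are already identified as Borel functions via Proposition \ref{stw3.7}. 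The paper's proof of Theorem \ref{tw5.8} does the same, with the one-barrier solutions $\underline{u}_{n}$ of $\underline{\mbox{\rm O}}\mbox{\rm P}(\varphi,\underline{f}_{n}+d\mu,h_{1})$ (supplied by Theorem \ref{tw5.5}) in place of the $u_n$, and with the convergence $\underline{Y}^{n,s,x}_t\searrow Y^{s,x}_t$ from \cite[Theorem 6.6]{Kl:BSM} identifying $u=\limsup_n\underline{u}_n$ with $Y^{s,x}$. Your shortcut would require a flow property for doubly reflected BSDEs that the paper does not establish; the repair is precisely the penalization route, which you in any case need for parts (iii)--(iv).

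Second, the convergence statements in (iv) are not, as you suggest, quotable ``measure-convergence statements of the nested penalization in \cite{Kl:BSM}''. What \cite[Theorem 6.6]{Kl:BSM} gives is the pathwise convergence $\underline{K}^{n,s,x}_t\searrow R^{s,x,+}_t$; the paper converts this into monotonicity and setwise convergence of $\underline{\beta}_n$ to $\nu^{+}$ by using that the Revuz correspondence is one-to-one (this also shows that your Jordan-decomposition measure $\nu^{+}$ agrees with that limit). The weak convergence $\underline{\gamma}_{n}\rightarrow\nu^{-}$ is then \emph{derived}, not quoted: one needs the uniform bound $\|\underline{\gamma}_{n}\|_{TV}\le\|\delta_{2}^{n}\|_{TV}$ coming from inequality (6.8) in \cite{Kl:BSM} together with Theorem \ref{tw5.5} and Lemma \ref{lm2.3}, which gives tightness, and then the identification of every weak limit point with $\nu^{-}$ through the distributional formulation (Remark \ref{uw4.1}) and the $L^1$-convergences of $u_n$, $\nabla u_n$, $f_{u_n}$, exactly as at the end of the proof of Theorem \ref{tw5.5}. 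Your Feynman--Kac plus Lemma \ref{lm2.3} control of $\nu^{-}$ in part (i) is a legitimate variant of the paper's bound (it mirrors the paper's use of the subsolution $\bar v$ from the upper-barrier problem, obtained via (6.32) in \cite{Kl:BSM}), but by itself it yields only $\|\nu^{-}\|_{TV}<+\infty$, not the weak convergence of the penalty measures asserted in (iv); similarly, your identification of the decreasing limit of $\underline{u}_n$ with $u$ (``the extra penalty forces $\tilde u\le h_{2}$ in the limit'') is exactly the content of \cite[Theorem 6.6]{Kl:BSM} and must be invoked as such rather than re-derived informally on the PDE side.
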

\begin{dow}
By Lemma \ref{lm3.1},  Lemma \ref{lm3.2} and \cite[Theorem
6.6]{Kl:BSM}, for q.e. $(s,x)\in D_{T}$ there exists a solution
$(Y^{s,x},Z^{s,x},R^{s,x})$ of RBSDE$_{s,x}(\varphi,D, f+d\mu,
h_{1},h_{2})$ such that $Y^{s,x}$ is of class (D),
$(Y^{s,x},Z^{s,x})\in\mathcal{D}^{q}\otimes M^{q}$ for $q\in(0,1)$
and $R^{s,x}\in\mathcal{V}^{1}$. Moreover, by \cite[Theorem
6.6]{Kl:BSM} again, for q.e. $(s,x)\in D_{T}$,
\begin{equation}
\label{eq3.6} \underline{Y}_{t}^{n,s,x}\searrow Y_{t}^{s,x}, \quad
t\in[0,\zeta_{\tau}],\,P'_{s,x}\mbox{-a.s.},
\end{equation}
\begin{equation}
\label{eq3.7} \underline{Z}^{n,s,x} \rightarrow Z^{s,x}, \quad
dt\otimes P'_{s,x}\mbox{-a.e. on }[0,\zeta_{\tau}]\times\Omega',
\end{equation}
\begin{equation}
\label{eq3.8} \underline{K}_{t}^{n,s,x}\searrow R_{t}^{s,x,+},
\quad t\in[0,\zeta_{\tau}],\, P'_{s,x}\mbox{-a.s.},
\end{equation}
where $(\underline{Y}_{t}^{n,s,x},\underline{Z}^{n,s,x},
\underline{K}_{t}^{n,s,x})$ is a solution of
$\underline{\mbox{R}}$BSDE$_{s,x}(\varphi,D,
\underline{f}_{n}+d\mu,h_{1})$. By Theorem \ref{tw5.5},
$\underline{u}_{n}(\bfX_{t})=\underline{Y}_{t}^{n,s,x}$,
$t\in[0,\zeta_{\tau}]$, $P'_{s,x}$-a.s,
$\sigma\nabla\underline{u}_{n}(\bfX)=\underline{Z}^{n,s,x}$,
$dt\otimes P'_{s,x}$-a.e. on $[0,\zeta_{\tau}]\times\Omega'$ and
$\underline{K}_{t}^{n,s,x}=A_{t}^{\underline{\beta}_{n}}$,
$t\in[0,\zeta_{\tau}]$, $P'_{s,x}$-a.s. Put
$u(s,x)=\limsup_{n\rightarrow+\infty}\underline{u}_{n}(s,x)$,
$(s,x)\in D_{T}$. Then by (\ref{eq3.6}),
\begin{equation}
\label{eq5.13} u(\bfX_{t})=Y_{t}^{s,x}, \quad
t\in[0,\zeta_{\tau}], \,P'_{s,x}\mbox{-a.s.}
\end{equation}
for q.e. $(s,x)\in D_T$. By Lemma \ref{lm2.4} and
(\ref{eq3.7}) there exists a Borel measurable function $\psi$ on
$D_{T}$ such that
\begin{equation}
\label{eq5.14} \psi(\bfX)=Z^{s,x}, \quad dt\otimes
P'_{s,x}\mbox{-a.e. on } [0,\zeta_{\tau}]\times\Omega'
\end{equation}
for q.e. $(s,x)\in D_{T}$. Since
${\underline{\beta}_{n}}-{\underline{\beta}_{n+1}}$ is the Revuz
measure of the AF
$A_{t}^{\underline{\beta}_{n}}-A_{t}^{\underline{\beta}_{n+1}}$
and by (\ref{eq3.8}) the functional is positive, it follows that
the measure ${\underline{\beta}_{n}}-{\underline{\beta}_{n+1}}$ is
positive. Therefore   $\{\underline{\beta}_{n}\}$ is a
nonincreasing sequence and $R_{t}^{s,x,+}=A_{t}^{\nu_{1}}$,
$t\in[0,\zeta_{\tau}]$, $P'_{s,x}$-a.s., where $\nu_{1}$ is a
setwise limit of $\{\underline{\beta}_{n}\}$. Put
\[
A_{t}=u(\bfX_{0})-u(\bfX_{t})-\int_{0}^{t}f_{u}(\bfX_r)\,dr
+A_{t}^{\nu_{1}}+\int_{0}^{t}\psi(\bfX_{r})\,dB_{r},\quad t\ge0.
\]
As in the proof of Theorem \ref{tw5.5} one can show that there
exists a nonnegative smooth measure $\nu_{2}$ on $D_T$ such that
$A=A^{\nu_{2}}$. Let $\nu=\nu^{1}-\nu^{2}$. From the construction
of $\nu^{1}, \nu^{2}$ it follows that $\nu^{1}=\nu^{+}$,
$\nu^{2}=\nu^{-}$ and
\begin{equation}
\label{eq5.15} A^{\nu}_{t}=R_{t}^{s,x}, \quad
t\in[0,\zeta_{\tau}],\, P'_{s,x}\mbox{-a.s.}
\end{equation}
for q.e. $(s,x)\in D_{T}$. By (\ref{eq5.13}) and (\ref{eq5.15}),
\begin{equation}
\label{eq5.16}
u(s,x)=E'_{s,x}\Big(\mathbf{1}_{\{\zeta>T-\tau(0)\}}
\varphi(\bX_{T-\tau(0)}) +\int_{0}^{\zeta_{\tau}}f_u(\bfX_r)\,dr
+\int_{0}^{\zeta_{\tau}}d(A_{r}^{\mu}+A^{\nu}_r)\Big)
\end{equation}
for q.e. $(s,x)\in D_T$. By (6.8) in \cite{Kl:BSM} (see the
beginning of the proof of \cite[Theorem 6.6]{Kl:BSM}) and Theorems
\ref{tw3.1}, \ref{tw5.5} and Lemma \ref{lm2.3},
\begin{equation}
\label{eq5.17} \|\underline{\beta}_{n}\|_{TV}\le
\|\underline{\beta}_{1}\|_{TV}, \quad
\|\underline{\gamma}_{n}\|_{TV}\le \|\delta_{2}^{n}\|_{TV},
\end{equation}
where $\delta_{2}^{n}=n(v_{2}^{n}-h_{2})^{+}\cdot m_{1}$,
$v_{2}^{n}$ is a renormalized solution of PDE$(\phi\vee\varphi,
\underline{f}_{n}+d\lambda_{2}+d\mu)$ with
$\lambda_{2}=f_{v}^{-}\cdot m_{1}+\lambda^{-}+\mu^{-}$ ($\lambda$
is the measure from condition (H6$'$)). From the first inequality
in (\ref{eq5.17}) it follows that $\|\nu^{+}\|_{TV}<+\infty$.
Furthermore, from Theorem \ref{tw5.5} we know that
$\sup_{n\ge1}\|\delta_{2}^{n}\|_{TV}<+\infty$. Therefore
$\{\underline{\gamma}_{n}\}$ is tight, which in fact implies that
$\underline{\gamma}_{n}\rightarrow\nu^{-}$ weakly on $D_{T}$ (see
the reasoning at the end of the proof of Theorem \ref{tw5.5}).
Hence $\|\nu^{-}\|_{TV}<+\infty$, and consequently
$\nu\in\MM_{0,b}(D_T)$. By (6.32) in \cite{Kl:BSM} and Theorem
\ref{tw5.5}, $\underline{u}_n\ge\bar v$, where $\bar v$ is the
first component of the solution of $\overline{\mbox{\rm
O}}\mbox{\rm P} (\varphi,f+d\mu,h_{2})$. Using
(\ref{eq5.13})--(\ref{eq5.16}) and Proposition \ref{stw2.3} one
can deduce now in much the same way as in the proof of Theorem
\ref{tw5.5} that $(u,\nu)$ is a solution of
OP$(\varphi,f+d\mu,h_{1},h_{2})$,  $u$ has the regularity
properties stated in (i) and
\begin{equation}
\label{eq5.18} \sigma\nabla u(\bfX)=\psi(\bfX), \quad dt\otimes
P'_{s,x}\mbox{-a.e. on } [0,\zeta_{\tau}]\times\Omega'
\end{equation}
for q.e. $(s,x)\in D_T$. From what has already been proved,
(\ref{eq3.7}), (\ref{eq5.14}), (\ref{eq5.18}) and Lemma
\ref{lm2.4} we get (iv). Finally, assertions (ii) and (iii) follow
from \cite[Theorem 6.6(ii),(v)]{Kl:BSM}, Lemma \ref{lm2.4},
Remark \ref{uw4.6} and the stochastic representation
(\ref{eq5.13})--(\ref{eq5.15}), (\ref{eq5.18}) of the solution
$(u,\nu)$.
\end{dow}

\begin{wn}
\label{wn5.9} Assume that $A^{\mu}$ is continuous, the barriers
$h_{1},h_{2}$ are quasi-continuous and
$h_{1}(T,\cdot)\le\varphi\le h_{2}(T,\cdot)$, $m$-a.e. Then the
minimality condition \mbox{\rm(c)} in the definition of a solution
of the obstacle problem is equivalent to \mbox{\rm(\ref{eq1.05})}.
\end{wn}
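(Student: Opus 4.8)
The plan is to reduce everything to two observations: that $u$ is quasi-continuous, and that a pathwise integral against $dA^{\nu^{+}}$ (or $dA^{\nu^{-}}$) vanishes a.s.\ for q.e.\ starting point exactly when the corresponding analytic integral against $\nu^{+}$ (resp.\ $\nu^{-}$) vanishes. Under the present hypotheses Theorem~\ref{tw5.8}(ii) gives that $u$ is quasi-continuous; together with the quasi-continuity of $h_{1},h_{2}$ this means that the processes $t\mapsto u(\bfX_{t})$, $t\mapsto h_{i}(\bfX_{t})$ are continuous $P'_{s,x}$-a.s.\ for q.e.\ $(s,x)$, so $g_{-}(\bfX_{r})=g(\bfX_{r})$ for $g\equiv u,h_{1},h_{2}$, and the $m_{1}$-a.e.\ relations $h_{1}\le u\le h_{2}$ upgrade to q.e.\ relations. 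In particular $u-h_{1}\ge0$ and $h_{2}-u\ge0$ q.e., so (since $\nu^{\pm}$ are soft) the integrals in (\ref{eq1.05}) are well defined. The bridge I would use is the Revuz formula (\ref{meyer}) (equivalently (\ref{eq2.06})): for a nonnegative quasi-continuous $g$,
\[
E'_{s,x}\int_{0}^{\zeta_{\tau}}g(\bfX_{r})\,dA_{r}^{\nu^{+}}
=\int\!\!\int_{[0,T-s]\times D}g(z)\,p'_{D}(t,(s,x),z)\,d\nu^{+}(z)\,dt,
\]
and since $p'_{D}>0$ on its domain, the left-hand side vanishes for q.e.\ $(s,x)$ (letting $s\downarrow0$ to exhaust $D_{T}$) if and only if $g=0$ $\nu^{+}$-a.e., i.e.\ $\int_{D_{T}}g\,d\nu^{+}=0$; the same holds with $\nu^{-}$.

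For the implication (c)$\Rightarrow$(\ref{eq1.05}) I would simply take $h_{1}^{*}=h_{1}$, $h_{2}^{*}=h_{2}$ in condition (c). This is admissible because quasi-continuity gives $h_{1},h_{2}\in\FF\mathcal{D}$ and the sandwich $h_{1}\le h_{1}^{*}\le u\le h_{2}^{*}\le h_{2}$ holds $m_{1}$-a.e. Using $g_{-}(\bfX_{r})=g(\bfX_{r})$, condition (c) becomes
\[
\int_{0}^{\zeta_{\tau}}(u-h_{1})(\bfX_{r})\,dA_{r}^{\nu^{+}}
=\int_{0}^{\zeta_{\tau}}(h_{2}-u)(\bfX_{r})\,dA_{r}^{\nu^{-}}=0,
\quad P'_{s,x}\text{-a.s.}
\]
for q.e.\ $(s,x)$. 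Taking $E'_{s,x}$ and applying the displayed Revuz identity with $g=u-h_{1}\ge0$ (resp.\ $g=h_{2}-u\ge0$) yields $\int_{D_{T}}(u-h_{1})\,d\nu^{+}=\int_{D_{T}}(h_{2}-u)\,d\nu^{-}=0$, which is exactly (\ref{eq1.05}).

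For the converse I would start from (\ref{eq1.05}). Since $u-h_{1}\ge0$ q.e.\ and $\nu^{+}$ is soft, the first equality forces $u-h_{1}=0$ $\nu^{+}$-a.e., whence the Revuz correspondence gives $\int_{0}^{\zeta_{\tau}}(u-h_{1})(\bfX_{r})\,dA_{r}^{\nu^{+}}=0$ $P'_{s,x}$-a.s.\ for q.e.\ $(s,x)$. Now let $h_{1}^{*},h_{2}^{*}\in\FF\mathcal{D}$ be an arbitrary admissible pair. The inequalities $h_{1}\le h_{1}^{*}\le u$ hold $m_{1}$-a.e., hence $P'_{s,x}$-a.s.\ the process spends no $dt$-time in the $m_{1}$-null sets where they fail; passing to left limits along the c\`adl\`ag process $h_{1}^{*}(\bfX)$ and using the continuity of $u(\bfX),h_{1}(\bfX)$, one gets $h_{1}(\bfX_{r})\le h_{1-}^{*}(\bfX_{r})\le u_{-}(\bfX_{r})=u(\bfX_{r})$ for every $r\in(0,\zeta_{\tau}]$. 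Therefore $0\le u_{-}-h_{1-}^{*}\le u-h_{1}$ pointwise along the process, and integrating against the positive measure $dA^{\nu^{+}}$ gives $0\le\int_{0}^{\zeta_{\tau}}(u_{-}-h_{1-}^{*})(\bfX_{r})\,dA_{r}^{\nu^{+}}\le\int_{0}^{\zeta_{\tau}}(u-h_{1})(\bfX_{r})\,dA_{r}^{\nu^{+}}=0$, which is the lower part of (c). The upper part follows symmetrically from the second equality in (\ref{eq1.05}).

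The main obstacle is the careful treatment of the left limits: one must justify that the $m_{1}$-a.e.\ order relations between the merely quasi-c\`adl\`ag $h_{i}^{*}$ and the quasi-continuous $u,h_{1},h_{2}$ survive the passage to left limits along the approximating process, so that the desired pointwise domination holds for \emph{every} $r$ (not just $dt$-a.e.). Granting this, the one-to-one Revuz correspondence and the strict positivity of $p'_{D}$ do the rest, converting the pathwise minimality condition into the analytic condition (\ref{eq1.05}) and back.
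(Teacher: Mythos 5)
Your proposal is correct and follows essentially the same route as the paper's proof: both directions pass through the Revuz duality formula (\ref{eq2.06})/(\ref{meyer}) to convert the vanishing of the expected pathwise integrals into the vanishing of $\int\!\!\int(u-h_1)p_D\,d\nu^+$ and $\int\!\!\int(h_2-u)p_D\,d\nu^-$, and then use positivity of $p_D$ together with quasi-continuity of $u,h_1,h_2$ to identify left limits with values. The only difference is that you spell out the domination argument for arbitrary admissible $h_1^*,h_2^*$ in the converse direction, a step the paper compresses into ``Of course (\ref{eq5.8}) implies (c)''.
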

\begin{proof}
Since $u,h_1,h_2$ are quasi-continuous, from condition (c) it
follows that
\begin{equation}
\label{eq5.8}
E'_{s,x}\int^{\zeta_{\tau}}_0(u-h_1)(\bfX_r)\,dA^{\nu^+}_r
=E'_{s,x}\int^{\zeta_{\tau}}_0(h_2-u)(\bfX_r)\,dA^{\nu^-}_r=0.
\end{equation}
Hence
\begin{align}
\label{eq5.9}
&\int^T_s\!\!\int_D(u-h_1)(t,y)p_D(s,x,t,y)\,d\nu^+(t,y)\nonumber \\
&\qquad=\int^T_s\!\!\!\int_D(h_2-u)(t,y)p_D(s,x,t,y)\,d\nu^-(t,y)=0,
\end{align}
which implies (\ref{eq1.05}), because $p_D(s,x,\cdot,\cdot)$ is
positive on $(s,T]\times D$. Conversely, assume that
(\ref{eq1.05}) is satisfied. Then (\ref{eq5.9}), and consequently
(\ref{eq5.8}) is satisfied. Of course (\ref{eq5.8}) implies (c).
\end{proof}
\medskip

Let us note that in general, even in the case of one quasi-l.s.c.
or quasi-u.s.c. reflecting barrier, the integrals in
(\ref{eq1.05}) may be strictly positive (see \cite[Example
5.5]{Kl:SPA}).

\begin{wn}
Assume \mbox{\rm(H1)--(H4)}. Let $(u,\nu)$ be a solution of
\mbox{\rm OP}$(\varphi,f+d\mu,h_{1},h_{2})$. Then
\[
u=\mbox{\rm quasi-}\essinf\{v\ge h_{1},\, m_{1}\mbox{-a.e.}:\,\,
v\mbox{ is a supersolution of
\,\rm{PDE}}(\varphi,f+d\mu-d\nu^{-})\}
\]
q.e. on $D_T$.
\end{wn}
\begin{dow}
Follows from Theorem \ref{tw5.8} and \cite[Lemma 4.9]{Kl:BSM}.
\end{dow}

\begin{wn}
Let $(u,\nu)$ be a solution of
\mbox{\rm OP}$(\varphi,f+d\mu,h_{1},h_{2})$ and let $h^{*}_{1},
h^{*}_{2}\in \mathcal{FD}$ be such that $h_{1}\le h^{*}_{1}\le
u\le h^{*}_{2}\le h_{2}$, $m_{1}$-a.e. Then
\begin{align*}
u(s,x)&=\esssup_{\sigma\in\mathcal{T}'}\essinf_{\delta\in\mathcal{T}'}
E'_{s,x}\Big(\int_{0}^{\sigma\wedge\delta\wedge\zeta_{\tau}}
f_{u}(\mathbf{X}_{r})\,dr
+\int_{0}^{\sigma\wedge\delta\wedge\zeta_{\tau}}dA^{\mu}_{r}\\
&\quad+h^{*}_{1}(\mathbf{X}_{\delta})
\mathbf{1}_{\{\delta\le\sigma< T-\tau(0)\}}
\mathbf{1}_{\{\delta<\zeta_{\tau}\}}
+h^{*}_{2}(\mathbf{X}_{\sigma})\mathbf{1}_{\{\sigma<\delta\}}
\mathbf{1}_{\{\sigma<\zeta_{\tau}\}} \\
&\quad +\varphi(\mathbf{X}_{T-\tau(0)})
\mathbf{1}_{\{\sigma=\delta=T-\tau(0)\}}\Big)
\end{align*}
q.e. on $D_T$, where $\mathcal{T}'$ is the set of all
$\{\GG'_t\}$-stopping times.
\end{wn}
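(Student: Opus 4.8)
The plan is to transfer the abstract Dynkin-game (saddle-point) representation for doubly reflected BSDEs established in \cite{Kl:BSM} to the present Markovian setting, using the identifications already supplied by Theorem \ref{tw5.8}. First I would recall that, by Theorem \ref{tw5.8}, for q.e. $(s,x)\in D_{T}$ the triple $(u(\bfX),\sigma\nabla u(\bfX),A^{\nu})$ is precisely the solution $(Y^{s,x},Z^{s,x},R^{s,x})$ of RBSDE$_{s,x}(\varphi,D,f+d\mu,h_{1},h_{2})$ on the space $(\Omega',\GG'_{\infty},\{\GG'_t\},P'_{s,x})$, with $Y^{s,x}$ of class (D), $(Y^{s,x},Z^{s,x})\in\mathcal{D}^{q}\otimes M^{q}$ for $q\in(0,1)$ and $R^{s,x}\in\mathcal{V}^{1}$, and that by Lemmas \ref{lm3.1} and \ref{lm3.2} the hypotheses (H1)--(H5), (H6$'$) guarantee that the abstract assumptions (A1)--(A5), (A6$'$) hold under $P'_{s,x}$ for q.e. $(s,x)$. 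This is exactly the framework in which \cite[Lemma 4.9]{Kl:BSM} furnishes the game representation.

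Next I would fix $h^{*}_{1},h^{*}_{2}\in\mathcal{FD}$ with $h_{1}\le h^{*}_{1}\le u\le h^{*}_{2}\le h_{2}$, $m_{1}$-a.e., and set $\hat L_{t}=h^{*}_{1}(\bfX_{t})$, $\check U_{t}=h^{*}_{2}(\bfX_{t})$. Because $h^{*}_{1},h^{*}_{2}$ are quasi-c\`adl\`ag, these processes are c\`adl\`ag and of class (D), and by (\ref{eq5.13}) they satisfy $L_{t}\le\hat L_{t}\le Y_{t}\le\check U_{t}\le U_{t}$ on $[0,\zeta_{\tau}]$ with $L=h_{1}(\bfX)$, $U=h_{2}(\bfX)$; hence they are admissible in the minimality condition (d) of the reflected BSDE. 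Applying \cite[Lemma 4.9]{Kl:BSM} with these barriers and evaluating the resulting saddle-point formula at $t=0$ expresses $Y^{s,x}_{0}$ as an $\esssup_{\sigma}\essinf_{\delta}$, over stopping times in $\mathcal{T}'$, of conditional expectations of the game payoff. Since $u(s,x)=Y^{s,x}_{0}$ by (\ref{eq5.13}), this is the required quantity.

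The remaining step is to translate the abstract payoff into the Markovian one. The maximizing index $\sigma$ corresponds to stopping at the upper barrier $h^{*}_{2}$ and the minimizing index $\delta$ to stopping at the lower barrier $h^{*}_{1}$, while the game also terminates at $\zeta_{\tau}=\zeta\wedge(T-\tau(0))$. On $\{\sigma<\delta\}\cap\{\sigma<\zeta_{\tau}\}$ the terminal contribution is $h^{*}_{2}(\bfX_{\sigma})$, on $\{\delta<\sigma<T-\tau(0)\}\cap\{\delta<\zeta_{\tau}\}$ it is $h^{*}_{1}(\bfX_{\delta})$, and on $\{\sigma=\delta=T-\tau(0)\}$ it is $\varphi(\bfX_{T-\tau(0)})$, the running part $\int_{0}^{\sigma\wedge\delta\wedge\zeta_{\tau}}f_{u}(\bfX_{r})\,dr+\int_{0}^{\sigma\wedge\delta\wedge\zeta_{\tau}}dA^{\mu}_{r}$ being accrued in every case. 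The hard part will be the bookkeeping of these indicator sets together with the correct treatment of the exit time $\zeta_{\tau}$ inside the payoff: one must check that the abstract terminal and barrier contributions of \cite[Lemma 4.9]{Kl:BSM} are carried onto exactly the events displayed above, and that the class over which the $\esssup/\essinf$ is taken in \cite{Kl:BSM} coincides with $\mathcal{T}'$. All the analytic regularity needed for this transfer---membership of $u$ in the relevant function spaces and the c\`adl\`ag property of the barrier processes---is already provided by Theorem \ref{tw5.8}, so no further estimates are required.
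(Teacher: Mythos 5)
Your strategy coincides with the paper's: by Theorem \ref{tw5.8} (or, for an arbitrary given solution, by the definition of OP$(\varphi,f+d\mu,h_{1},h_{2})$ together with Proposition \ref{stw3.7} and uniqueness of solutions of the reflected BSDE), the triple $(u(\bfX),\sigma\nabla u(\bfX),A^{\nu})$ is, for q.e. $(s,x)\in D_{T}$, the solution of RBSDE$_{s,x}(\varphi,D,f+d\mu,h_{1},h_{2})$; the processes $\hat{L}=h_{1}^{*}(\bfX)$, $\check{U}=h_{2}^{*}(\bfX)$ are c\`adl\`ag, squeezed between the barriers and $Y^{s,x}$, hence admissible in the minimality condition (d); and $u(s,x)=Y^{s,x}_{0}$ converts an abstract game representation into the stated formula. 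All of this is set up correctly.

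The gap is the abstract result you lean on. \cite[Lemma 4.9]{Kl:BSM} does not furnish a two-barrier saddle-point representation: in this paper it is invoked only for one-sided statements --- the quasi-essinf characterizations and the single-$\esssup$ optimal stopping corollary following Theorem \ref{tw5.5} --- i.e.\ it is a Snell-envelope-type lemma for reflected BSDEs with one barrier. A one-barrier lemma yields only one value (upper or lower) of the game; the equality $\esssup_{\sigma}\essinf_{\delta}=\essinf_{\delta}\esssup_{\sigma}$ for merely c\`adl\`ag reflecting processes is precisely the nontrivial content of the two-barrier theory and is not a formal consequence of the one-barrier result. What the paper actually cites, alongside Theorem \ref{tw5.8}, is \cite{Lepeltier} (Lepeltier--Xu), whose Dynkin game characterization of reflected BSDEs with two rcll barriers applies here exactly because your chosen reflecting processes $h_{1}^{*}(\bfX),h_{2}^{*}(\bfX)$ are rcll --- which is also the reason the statement is phrased in terms of $h_{1}^{*},h_{2}^{*}\in\mathcal{FD}$ rather than $h_{1},h_{2}$ themselves. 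Replacing your citation by \cite{Lepeltier}, and carrying out the payoff bookkeeping you deferred, turns your outline into the paper's proof.
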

\begin{dow}
Follows from Theorem \ref{tw5.8} and \cite[Proposition 3.1]{Lepeltier}.
\end{dow}
\medskip\\
{\bf Acknowledgements}
\medskip\\
The first author was supported by  NCN grant no.
2012/07/D/ST1/02107, the second author was supported by NCN grant
no. 2012/07/B/ST1/03508.

\end{document}